\newtheorem{thm}{Theorem}
\newtheorem{cor}[thm]{Corollary}
\newtheorem{lem}[thm]{Lemma}
\newtheorem{prop}[thm]{Proposition}
\newtheorem{claim}[thm]{Claim}
\newtheorem{fact}[thm]{Fact}
\newtheorem{defn}[thm]{Definition}
\theoremstyle{definition}
\newtheorem{question}{Question}
\newcommand{\nn}{\mathbb{N}}
\newcommand{\ee}{\varepsilon}
\newcommand{\xx}{\mathbf{x}}
\newcommand{\overp}{\overline{p}}
\newcommand{\coll}{\mathrm{Col}}
\newcommand{\iii}{\mathcal{I}}
\newcommand{\fff}{\mathcal{F}}
\newcommand{\ddd}{\mathcal{D}}
\newcommand{\uuu}{\mathcal{U}}
\newcommand{\vvv}{\mathcal{V}}
\newcommand{\col}{\coll(\lambda,<\kappa)}
\newcommand{\al}{\alpha}
\newcommand{\be}{\beta}
\newcommand{\de}{\delta}
\newcommand{\la}{\lambda}
\newcommand{\ka}{\kappa}
\newcommand{\ga}{\gamma}
\newcommand{\om}{\omega}
\newcommand{\pl}{\mathrm{Pl}}
\newcommand{\expp}{\mathrm{exp}}
\newcommand{\con}{\smallfrown}
\newcommand{\vep}{\varepsilon}
\newcommand{\rest}{\upharpoonright}
\newcommand{\mc}[1]{\mathcal{#1}}
\newcommand{\mm}[1]{\mathrm{#1}}
\newcommand{\set}[2]{ \{ {#1}\,:\,{#2} \} }
\newcommand{\sset}{\subseteq}
\newcommand{\nrm}[1]{\|#1\|}
\newcommand{\N}{{\mathbb N}}
\begin{document}

\title[Unconditional basic sequences]{Unconditional basic sequences
in spaces of large density}
\author{Pandelis Dodos, Jordi Lopez-Abad and Stevo Todorcevic}

\address{Universit\'{e} Pierre et Marie Curie - Paris 6, Equipe d'Analyse
Fonctionnelle, Bo\^{i}te 186, 4 place Jussieu, 75252 Paris Cedex 05, France.}
\email{pdodos@math.ntua.gr}

\address{Universit\'{e} Denis Diderot - Paris 7, Equipe de Logique
Math\'{e}matiques, 2 place Jussieu, 72521 Paris Cedex 05, France.}
\email{abad@logique.jussieu.fr}

\address{Universit\'{e} Denis Diderot - Paris 7, C.N.R.S., UMR 7056, 2 place Jussieu
- Case 7012, 72521 Paris Cedex 05, France}
\address{and}
\address{Department of Mathematics, University of Toronto, Toronto, Canada, M5S 2E4.}
\email{stevo@math.toronto.edu}

\footnotetext[1]{2000 \textit{Mathematics Subject Classification}. Primary 46B03, 03E35;
Secondary 03E02, 03E55, 46B26, 46A35.}
\footnotetext[2]{\textit{Key words}: unconditional basic sequence,
non-separable Banach spaces, separable quotient problem, forcing,
polarized Ramsey, strongly compact cardinal. }

\maketitle


\begin{abstract}
We study the problem of the existence of unconditional basic sequences in Banach spaces of high
density. We show, in particular, the relative consistency with GCH of the statement that every
Banach space of density $\aleph_\om$ contains an unconditional basic sequence.
\end{abstract}


\section{Introduction}

In this paper we study particular instances of the general
unconditional basic sequence problem asking under which conditions a
given Banach space must contain an infinite unconditional basic
sequence (see \cite[page 27]{LT}). We chose to study instances of
the problem for Banach spaces of large densities exposing thus its
connections with large-cardinal axioms of set theory. The first
paper on this line of research is a well-known paper of J. Ketonen
\cite{Ke} which shows that if a density of a given Banach space $E$
is greater or equal to the $\omega$-Erd\H{o}s cardinal (usually
denoted as $\kappa(\omega)$, see Section 2.2), then $E$ contains an
infinite unconditional basic sequence. More precisely, let
$\mathfrak{nc}$ be the minimal cardinal $\lambda$ such that every
Banach space of density at least $\lambda$ contains an infinite
unconditional basic sequence. Then Ketonen's result can be restated
as follows.
\begin{thm}[\cite{Ke}]
$\kappa(\omega)\geq\mathfrak{nc}.$
\end{thm}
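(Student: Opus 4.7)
The goal is to show that every Banach space $E$ of density at least $\ka(\om)$ contains an infinite unconditional basic sequence. The plan combines the defining partition relation of $\ka(\om)$ with a discretization of the norm structure of $E$: first extract a long well-separated sequence, homogenize it via Ramsey to obtain a subsequence with a spreading structure, and finally convert spreading to unconditionality by a classical dichotomy.

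From the density hypothesis, fix a $\de$-separated sequence $(x_\al)_{\al<\ka(\om)}$ in the unit sphere of $E$ for some $\de>0$. For each $n$ define a coloring $c_n\colon[\ka(\om)]^n\to\om$ that encodes, for each $F=\{\al_1<\cdots<\al_n\}$, the dyadic truncations (to a prescribed precision depending on $n$) of the norms $\|\sum_{i\leq n} q_i x_{\al_i}\|$ as the coefficient vector $(q_1,\dots,q_n)$ ranges over $\Q^n$; this is a coloring with countably many colors. The defining relation $\ka(\om)\to(\om)^{<\om}_2$ boosts by a routine diagonal argument to $\ka(\om)\to(\om)^{<\om}_\om$, providing an infinite $H\sset\ka(\om)$ which is simultaneously homogeneous for every $c_n$.

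Re-enumerate $H$ as $(x_n)_{n<\om}$. Homogeneity now says that for every rational tuple $(q_1,\dots,q_k)$ and every increasing $n_1<\cdots<n_k$ in $\om$, the norm $\|\sum_i q_i x_{n_i}\|$ depends only on $(q_i)$, up to the prescribed precision. A further diagonalization and thinning extract a genuinely spreading basic subsequence. Now apply the Rosenthal $\ell_1$ dichotomy: either a subsequence is equivalent to the unit vector basis of $\ell_1$, which is $1$-unconditional and yields the conclusion at once; or the sequence may be assumed weakly Cauchy, in which case the consecutive differences $y_n=x_{2n}-x_{2n-1}$ form a weakly null spreading sequence, and a classical argument (in the spirit of Brunel--Sucheston) shows that such a sequence is suppression-$1$-unconditional.

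The principal difficulty is the middle stage. One must verify that the countable test data (rational coefficients and dyadic thresholds) faithfully captures the full norm structure of the finite-dimensional spans, and that the homogenized sequence can indeed be refined to a genuinely spreading basic sequence. This calls for uniform continuity estimates for the norm in its coefficients, together with a simultaneous diagonalization across all tuple lengths $n$ and precisions---routine but technical---to pass from \emph{homogeneous on a countable dense test set} to \emph{uniformly homogeneous on all scalar coefficients}. The final passage from spreading to unconditional then proceeds by the standard dichotomy sketched above.
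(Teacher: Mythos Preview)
The paper does not itself prove this statement; it is quoted as Ketonen's theorem and serves as background in the introduction. The only discussion of its proof appears in the concluding Section~6, where the authors remark that Ketonen's argument actually yields a \emph{sub-symmetric} basic sequence (one equivalent to all its subsequences), and that unconditionality then follows easily via Rosenthal's $\ell_1$ theorem. Your overall plan---homogenize a separated sequence via the defining partition relation to obtain a spreading sequence, then apply the Rosenthal dichotomy together with the Brunel--Sucheston fact that a seminormalized weakly-null spreading sequence is suppression-unconditional---is precisely this outline, so your approach agrees with what the paper records of Ketonen's.

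One step should be tightened. As you describe it, each coloring $c_n$ records norms only to a fixed finite precision, which a priori yields merely an \emph{approximately} spreading sequence; the ``further diagonalization and thinning'' you invoke to upgrade this is not a well-defined operation, since thinning a homogeneous set cannot improve the precision already achieved. The clean repair is to use, for each arity $n$, each rational tuple $q\in\Q^n$, and each rational threshold $r$, the $2$-coloring recording whether $\|\sum_i q_i x_{\alpha_i}\|>r$. This is a countable family of $2$-colorings, and the same coding trick you use to boost to $\omega$ colors shows that $\kappa(\omega)\to(\omega)^{<\omega}_2$ provides a single infinite $H$ homogeneous for all of them simultaneously; on $H$ each norm $\|\sum_i q_i x_{\alpha_i}\|$ is then determined \emph{exactly}, and density of $\Q$ plus continuity of the norm extends this to arbitrary real scalars---no further diagonalization is needed. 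Alternatively, since $\kappa(\omega)$ is strongly inaccessible one has $2^{\aleph_0}<\kappa(\omega)$, and one may simply color each $n$-tuple by its exact isometry type; this is presumably closer to Ketonen's original route.
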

Since $\kappa(\omega)$ is a considerably large cardinal (strongly
inaccessible and more) one would like to determine is
$\mathfrak{nc}$  really a large cardinal or not, and, of course at
some point one would also like to determine the exact value of
this cardinal. Unfortunately, there are not too many results in
the literature that would point out towards lower bounds for this
cardinal. In fact, the largest known lower bound for
$\mathfrak{nc}$ is given by S. A. Argyros and A. Tolias \cite{AT}
who showed that $\mathfrak{nc}>2^{\aleph_0}.$ So in particular the
following problem is widely open.
\begin{question}
Is ${\rm exp}_\omega(\aleph_0),$ any of the finite-tower exponents
${\rm exp}_n(\aleph_0)$, or any of their $\omega$-successors ${\rm
exp}_n(\aleph_0)^{+\omega}$ an upper bound of $\mathfrak{nc}$? In
particular, does $(2^{\aleph_0})^{+\omega}\geq\mathfrak{nc}$ hold?
\end{question}
Our first result shows that ${\rm exp}_\omega(\aleph_0)$ is not
such a bad candidate for an upper bound of $\mathfrak{nc}.$
\begin{thm}
\label{ithm1} The inequality ${\rm
exp}_\omega(\aleph_0)\geq\mathfrak{nc}$ is a statement that is
consistent relative to the consistency of infinitely many strongly
compact cardinals.
\end{thm}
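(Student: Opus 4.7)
The plan is to start with a model $V$ of ZFC + GCH containing an increasing sequence $(\kappa_n)_{n\geq 1}$ of strongly compact cardinals and to perform an iterated Levy-style collapse that turns each $\kappa_n$ into $\aleph_n$, so that $\sup_n\kappa_n$ becomes $\nw$. A product of collapses of the form $\coll(\aleph_{n-1},<\kappa_n)$, with appropriate (Easton-like) support, achieves this while preserving GCH. In the resulting extension $V[G]$ the cardinals $\aleph_n$ inherit from the $\kappa_n$ a hierarchy of combinatorial partition properties far beyond what is available at $\aleph_n$ in ordinary models. We aim to show that in $V[G]$ every Banach space of density $\nw$ contains an infinite unconditional basic sequence; since GCH persists, $\nw=\expp_\om(\aleph_0)$, and this yields the desired inequality $\expp_\om(\aleph_0)\geq\mathfrak{nc}$ in $V[G]$.

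The combinatorial engine of the argument is a polarized Ramsey principle in $V[G]$ of the form: for every coloring $c:\prod_{n\geq 1} A_n\to 2$ with $|A_n|$ a cardinal close to $\aleph_n$, there exist $B_n\sset A_n$ of the same size on which $c$ is constant. In $V$ such a principle, restricted to products of sets of size less than $\kappa_n$, follows directly from the strong compactness of each $\kappa_n$, which provides fine $\kappa_n$-complete ultrafilters on arbitrarily large index sets. The nontrivial step is to transfer the principle to $V[G]$: one exploits that the $n$-th factor of the collapse is $\kappa_n$-c.c.\ and suitably closed below, and invokes term-forcing and ultrafilter-lifting arguments to produce ultrafilters in the extension that are complete enough to witness the partition property at $\aleph_n$.

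Once the polarized theorem is available in $V[G]$, the Banach-space step proceeds in the spirit of Ketonen's original proof. Given $E$ of density $\nw$, one fixes a normalized sequence $(x_\al)_{\al<\nw}$ spanning a dense subspace and decomposes the index set into levels $I_n=[\aleph_{n-1},\aleph_n)$. Using Mazur-type approximation by finite $\vep$-nets of the dual ball, the search for an unconditional block reduces to stabilizing countably many sign-type colorings on the product $\prod_n I_n$. Simultaneously applying the polarized Ramsey principle yields $J_n\sset I_n$ of the full size on which all relevant colorings are constant, and a diagonal selection of one $y_n\in J_n$ per level produces an infinite sequence $(y_n)_{n<\om}$ which is unconditional with a uniform constant.

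The main obstacle, and where most of the work should concentrate, is the preservation of the polarized Ramsey principle through the collapse. Fine $\kappa_n$-complete ultrafilters must be shown to extend to $V[G]$ with enough completeness surviving to witness the partition property for products of factors of size $\aleph_n$; strong compactness of the $\kappa_n$ (rather than mere measurability or supercompactness) is precisely what makes this lifting possible. A secondary but delicate point is that the extraction of the unconditional sequence is genuinely infinite-dimensional, so the polarized principle is needed for the full infinite product rather than for finite products alone; this forces a simultaneous control of all levels $n$, typically via a fusion-style argument combining the thinnings obtained at each finite stage.
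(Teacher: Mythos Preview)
Your overall architecture---collapse a sequence of strongly compacts, obtain a polarized Ramsey principle in the extension, and deduce the Banach-space conclusion---matches the paper's. But the specific combinatorial principle you isolate and the Banach-space reduction you sketch are both too weak, and this is a genuine gap rather than a matter of detail.

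The principle you state, namely homogeneity for colorings $c:\prod_n A_n\to 2$ with one element selected from each factor, is essentially the paper's $\mathrm{Pl}_1$. That principle suffices only for $\mathfrak{nc}_{\mathrm{seq}}$ (unconditional subsequences of \emph{weakly null} long sequences), not for $\mathfrak{nc}$. For an arbitrary Banach space $E$ of density $\kappa$ there is no reason to have a normalized weakly null $\kappa$-sequence, so the Ketonen-style $\varepsilon$-net stabilization you propose cannot by itself force the stabilized ``color'' to be the good one: after thinning each level $I_n$ to $J_n$ so that all selections $(y_n)\in\prod_n J_n$ share the same norm-type, nothing in your argument rules out that this common type is \emph{not} $(1+\varepsilon)$-unconditional. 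Ketonen's original proof avoids this precisely because $\kappa\to(\omega)^{<\omega}$ is a genuine (non-polarized) Ramsey property giving homogeneity on $[\omega]^n$ for all $n$ simultaneously; the polarized version you have does not.

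The paper's fix is to work with \emph{pairs}: it establishes the two-dimensional principle $\mathrm{Pl}_2(\kappa)$ (colorings of $\big[[\kappa]^2\big]^{<\omega}$), uses Rosenthal's $\ell_1$-theorem to assume $E$ contains no $\ell_1$, and then colors tuples of pairs $(\{\alpha_i,\beta_i\})_i$ according to the unconditionality behavior of the \emph{differences} $x_{\beta_i}-x_{\alpha_i}$. After $\mathrm{Pl}_2$ produces infinite sets $\mathbf{x}_i$ with the color constant on $\prod_i[\mathbf{x}_i]^2$, one passes inside each $\mathbf{x}_i$ to a weakly Cauchy subsequence, so the corresponding differences become normalized and \emph{weakly null}. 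A multidimensional version of Odell's Schreier-unconditionality lemma (Lemma~\ref{wnullunco}) then forces the constant color to be ``$1$-unconditional''. Both the move to dimension $2$ and the Rosenthal/Odell step are essential and are absent from your proposal. On the forcing side, note also that the paper does not collapse $\kappa_n$ to $\aleph_n$; it collapses $\kappa_{n+1}$ to $\lambda_n^+$ with $\lambda_n=(2^{(2^{\kappa_n})^+})^+$, because the ideal witnessing the partition property lives on $[(\exp_d(\kappa))^+]^{\omega}$ and its dense subset must be $\lambda_n$-closed to survive the tail of the iteration.
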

The consistency proof relies heavily on a Ramsey-theoretic
property of $\mathrm{exp}_\om(\aleph_0)$ established in a previous
work of S. Shelah \cite{Sh2} (see also \cite{Mi}). One can also
arrange the joint consistency of GCH and the inequality ${\rm
exp}_\omega(\aleph_0)=\aleph_\omega\geq\mathfrak{nc}$. Combining
this with a well known result of J. N. Hagler and W. B. Johnson
\cite{HJ}, we get the following information about the famous
separable quotient problem.
\begin{cor}
It is relatively consistent that every Banach space of density at
least $\aleph_\omega$ has a separable quotient with an
unconditional basis.
\end{cor}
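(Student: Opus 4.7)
The plan is to combine Theorem \ref{ithm1} (in its GCH-preserving form advertised in the preceding paragraph) with the classical theorem of Hagler and Johnson from \cite{HJ}.

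I would work inside a model of set theory in which GCH holds and $\expp_\omega(\aleph_0)=\nw\geq\mathfrak{nc}$; the existence of such a model is the substantive content of the sentences immediately preceding the corollary. Fix inside this model an arbitrary Banach space $E$ of density at least $\nw$.

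Since the Hagler--Johnson theorem requires an unconditional basic sequence in the \emph{dual}, the next step is to observe that $\mathrm{dens}(E^*)\geq\mathrm{dens}(E)\geq\nw$. This is a standard duality inequality: given a dense set $F\sset E^*$ of size $\mathrm{dens}(E^*)$, for each $f\in F$ and each positive rational $r$ pick, when possible, a vector $x_{f,r}\in E$ with $\|x_{f,r}\|<r+1$ and $f(x_{f,r})>r$; a routine Hahn--Banach argument shows that the closed linear span of these vectors is all of $E$, so $\mathrm{dens}(E)\leq\mathrm{dens}(E^*)$. Applying the hypothesis $\mathfrak{nc}\leq\nw$ to $E^*$ now produces an infinite unconditional basic sequence $(f_n)_{n<\om}\sset E^*$.

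The final step is to invoke \cite{HJ}: if the dual of a Banach space $X$ contains an infinite unconditional basic sequence $(f_n)$, then the quotient $X/Y$ with $Y=\bigcap_{n}\ker f_n$ is infinite-dimensional and separable, its dual identifies isometrically with the closed linear span of the $(f_n)$ inside $X^*$, and duality transfers the unconditional basis of that span to an unconditional basis of $X/Y$. Specializing to $X=E$ yields the separable quotient of $E$ with an unconditional basis called for by the corollary.

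The only real obstacle is the parenthetical claim preceding the corollary, namely that the forcing construction underlying Theorem \ref{ithm1} can be arranged so as to preserve GCH while simultaneously forcing $\expp_\omega(\aleph_0)=\nw$. Once this cardinal arithmetic is set up, the Banach-space content of the argument---passing to the dual and quoting Hagler--Johnson---is essentially formal.
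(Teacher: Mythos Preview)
Your overall strategy matches the paper's exactly: pass to the dual, use $\mathrm{dens}(E^*)\geq\mathrm{dens}(E)$, apply the consistency result to get an unconditional basic sequence in $E^*$, and then invoke the Hagler--Johnson consequence.

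One correction is in order, though. Your sketch of \emph{why} Hagler--Johnson works is not right. Taking $Y=\bigcap_n\ker f_n$ does not in general produce a separable quotient, and $(X/Y)^*$ is the weak* closure of the span of the $f_n$, not the norm closure: for instance, with $X=\ell_\infty$ and $f_n=e_n^*\in\ell_1\subseteq\ell_\infty^*$ one gets $Y=\{0\}$ and $X/Y=\ell_\infty$. Moreover, even when the dual of a separable space has an unconditional basis, this does not by itself yield an unconditional basis of the predual. The actual argument from \cite{HJ} (see also \cite[Proposition 16]{ADK}, which the paper cites) is more delicate and passes through a dichotomy involving $\ell_1$. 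Since you are treating Hagler--Johnson as a black box, this misdescription does not break your proof, but you should drop the incorrect explanation and simply cite the result as the paper does.
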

The analysis given in this paper together with some known results
from Banach space theory suggest, in particular, that by
restricting the class of Banach spaces to, say, reflexive, or more
generally weakly compactly generated Banach spaces, one might get
different answers about the size of the corresponding cardinal
numbers $\mathfrak{nc}_{\rm rfl}$ and $\mathfrak{nc}_{\rm wcg},$
respectively. To describe this difference it will be convenient to
introduce yet another natural cardinal characteristic
$\mathfrak{nc}_{\rm seq}$, the minimal cardinal $\theta$ such that
every normalized weakly null sequence $(x_\al:\al<\theta)$ in some
Banach space $E$ has a subsequence which is unconditional. Clearly
$\mathfrak{nc}_{\rm rfl}\leq \mathfrak{nc}_{\rm wcg}$ while by the
Amir-Lindenstrauss theorem \cite{AL} we see that
$\mathfrak{nc}_{\rm wcg}\leq \mathfrak{nc}_{\rm seq}$. The first
known lower bound on these cardinal is due to B. Maurey and H. P.
Rosenthal \cite{MR} who showed that $\mathfrak{nc}_{\rm
seq}>\aleph_0$, though considerably deeper is the lower bound of
W. T. Gowers and B. Maurey \cite{GM} who showed that in fact
$\mathfrak{nc}_{\rm rfl}>\aleph_0$. The largest known lower bound
on these cardinals is given in \cite{ALT} who showed that
$\mathfrak{nc}_{\rm rfl}>\aleph_1.$ This suggests the following
question.
\begin{question}
Is $\aleph_\omega$ or any of the finite successors $\aleph_n$
$(n\geq 2)$ an upper bound on any of the three cardinals
$\mathfrak{nc}_{\rm seq},$ $\mathfrak{nc}_{\rm rfl},$ or
$\mathfrak{nc}_{\rm wcg}$?
\end{question}
That $\aleph_\omega$ is not such a bad choice for an upper bound
of $\mathfrak{nc}_{\rm seq}$ may be seen from our second result.
\begin{thm}
The inequality  $\aleph_\om\geq\mathfrak{nc}_{\rm seq}$ is a
statement that is consistent relative to the consistency of a
single measurable cardinal.
\end{thm}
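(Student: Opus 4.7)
My plan is to start from a measurable cardinal $\ka$ with a normal ultrafilter $\uuu$ on $\ka$ in the ground model $V$, and to collapse $\ka$ down to $\aleph_\om$ in a way that preserves enough of the Ramsey-theoretic strength of $\uuu$ to extract infinite unconditional subsequences from normalized weakly null sequences of length $\aleph_\om$. First, I would apply Prikry forcing $P_\uuu$ to singularize $\ka$ to cofinality $\om$, yielding a Prikry-generic sequence $\vec\la=(\la_n)_{n<\om}$ cofinal in $\ka$; I would then follow this with a Levy-style collapse arranging $\la_n=\aleph_n$, so that in $V[G]$ we have $\ka=\aleph_\om$. Two features carry over: the Prikry-generic sequence eventually enters every $V$-set in $\uuu$, and, by Prikry's lemma, every formula of the forcing language with ground-model parameters is decided by a direct extension of any condition.

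Next, in $V[G]$, consider a normalized weakly null sequence $(x_\al:\al<\aleph_\om)$ in some Banach space $E$. For each $n$ I would color finite sets $F\in[\ka]^n$ by the $\Q$-rounded vector of signed-sum norms
$$c_n(F)=\bigl(\,\lfloor\,\|\textstyle\sum_{\al\in F}\ee_\al x_\al\|\,\rfloor_\Q\,:\,\ee\in\{\pm1\}^F\bigr)\in\Q^{2^n}.$$
Using Prikry's lemma together with the $\ka$-c.c.\ of the Levy collapse and nice-name arguments, I would reduce $c_n$ to a $V$-coloring $\bar c_n:[\ka]^n\to\Q^{2^n}$ with countably many values. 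By $\ka$-completeness of $\uuu$ in $V$, each $\bar c_n$ is constant on $[A_n]^n$ for some $A_n\in\uuu$; diagonalizing via $\aleph_1$-completeness yields a single $A\in\uuu$ on which $\bar c_n$ is constant for every $n$. In $V[G]$, the Prikry sequence $\vec\la$ lies eventually in $A$, so the tail subsequence $(x_{\la_n})_{n\ge N}$ has norm profiles that depend only on the size and sign pattern of the indexing finite set.

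Finally, since the original sequence is weakly null, the resulting ``exact spreading model'' on $\{\la_n:n\ge N\}$ is $1$-suppression unconditional by a standard Brunel--Sucheston argument, which upgrades to $C$-unconditionality ($C\le 2$) of any countable subsequence. This produces the required infinite unconditional basic subsequence, verifying $\mathfrak{nc}_{\rm seq}\le\aleph_\om$ in $V[G]$.

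\paragraph{Main obstacle.} The principal difficulty is bridging the two models: the coloring $c_n$ depends on Banach-space data introduced by the Levy collapse and therefore does not live in $V$, whereas the ultrafilter $\uuu$ driving the Ramsey extraction is a $V$-object. The bridge is provided by the Prikry property (to handle the $P_\uuu$ step) together with $\ka$-c.c.\ and nice-name analysis (to handle the subsequent collapse), ensuring that a suitable $V$-coloring $\bar c_n$ dominates $c_n$ on a $\uuu$-large set. A secondary point requiring care is that the Banach space $E$ itself is a $V[G]$-object, so the ``spreading model'' argument turning the stabilized norm profile into genuine unconditionality must be executed inside $V[G]$ — but once the exact norm profile is in hand, the usual weak-nullity argument applies verbatim.
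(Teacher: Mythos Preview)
Your approach differs from the paper's and has a genuine gap at its core.

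The paper proceeds modularly: it first invokes the Di Prisco--Todorcevic result that from a measurable cardinal one can force $\mathrm{Pl}_1(\aleph_\om)$ (Theorem~\ref{thmnewnew}), and then proves, entirely inside one model and with no further reference to forcing, that $\mathrm{Pl}_1(\ka)$ implies $\mathfrak{nc}_{\mathrm{seq}}\le\ka$ (Theorem~\ref{thmnewnewnew}), via a coloring recording unconditionality constants together with the multidimensional Schreier-unconditionality Lemma~\ref{wnullunco}. An alternative via the free-set property $\mathrm{Fr}_\om(\ka,\om)$ is also given (Theorem~\ref{mainappli2}). The point is that once the polarized partition relation holds, the Banach-space deduction is carried out in a single model; the forcing and the functional analysis never interact.

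Your plan instead tries to run the Banach-space argument \emph{through} the forcing, using the Prikry-generic sequence itself as the source of the unconditional subsequence. The step that fails is the ``reduction to a $V$-coloring $\bar c_n$.'' The coloring $c_n$ is built from a Banach space, a norm, and a sequence $(x_\al)$ that are genuine $V[G]$-objects---in particular they depend on reals added by the collapse portion of the forcing---so there is no $V$-coloring $\bar c_n$ refining $c_n$ with fewer than $\ka$ colors that you can then feed into Rowbottom's theorem for $\uuu$. Prikry's lemma governs only the $P_\uuu$ factor and says nothing about data introduced by the collapse; and your ``$\ka$-c.c.\ of the Levy collapse'' is at best unclear once $\ka$ has been singularized. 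What actually bridges $V$ and $V[G]$ in arguments of this type is not a reduction of the coloring to the ground model but an \emph{ideal} in $V[G]$ generated by complements of ground-model ultrafilter sets, together with a dense $\la$-closed family of positive sets coming from sequences of conditions---precisely the machinery of Lemma~\ref{MainLemma} (and, in the one-dimensional case, of \cite{DT})---and your sketch omits this entirely. Two lesser points: $\lfloor\cdot\rfloor_\Q$ is ill-defined since $\Q$ is dense (you presumably intend a fixed countable mesh, in which case you should say so and track the error); and with only approximate norm stabilization your spreading-model endgame would give $C$-unconditionality for some finite $C$, not the $1$-unconditionality the paper actually obtains---still enough for the stated inequality, but worth noting.
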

Thus, the consistency proof uses a considerably weaker assumption
from that used in Theorem \ref{ithm1}. It relies on two
Ramsey-theoretic principles, one established by P. Koepke
\cite{Ko} and the other by C. A. Di Prisco and S. Todorcevic
\cite{DT}. It also gives the joint consistency of the GCH and the
cardinal inequality  $\aleph_\om\geq\mathfrak{nc}_{\rm seq}$.

\section{Preliminaries}

Our Banach space theoretic and set theoretic terminology and
notation are standard and follow \cite{LT} and \cite{Ku},
respectively. We will consider only real Banach spaces though,
using essentially the same arguments, one notices that all our
results are valid for complex Banach spaces as well.

Since in this note we are concerned with the problem of the
existence of unconditional basic sequences in Banach spaces
of high density, let us introduce the following cardinal invariants
related to the version of the unconditional basic sequence problem
that we study here.
\begin{defn}
\label{CardinalInvariants} Let $\mathfrak{nc}$, $\mathfrak{nc}_{\rm wcg}$, $\mathfrak{nc}_{\rm rfl}$
and $\mathfrak{nc}_{\rm seq}$ be defined as follows.
\begin{enumerate}
\item[(1)] $\mathfrak{nc}$ is the minimal cardinal $\lambda$ such that every Banach space of density
$\lambda$ contains an unconditional basic sequence.
\item[(2)] $\mathfrak{nc}_{\rm wcg}$ (respectively, $\mathfrak{nc}_{\rm rfl}$) is the minimal cardinal
$\lambda$ such that every weakly compactly generated (respectively, reflexive) Banach space of density
$\lambda$ contains an unconditional basic sequence.
\item[(3)] $\mathfrak{nc}_{\rm seq}$ is the minimal cardinal $\lambda$ such that every normalized weakly
null sequence $(x_\al:\al<\lambda)$ in a Banach space $E$ has a subsequence which is unconditional.
\end{enumerate}
\end{defn}
Let us now recall some standard set theoretic notions that will be used throughout the paper.

\subsection{Ideals on Fields of Sets}

Let $X$ be a non-empty set. An \textit{ideal} $\iii$ on $X$
is a collection of subsets of $X$ satisfying the following conditions.
\begin{enumerate}
\item[(i)] If $A\in\iii$ and $B\subseteq A$, then $B\in\iii$.
\item[(ii)] If $A,B\in\iii$, then $A\cup B\in\iii$.
\end{enumerate}
If $\iii$ is an ideal on $X$ and $\ka$ is a cardinal, then
we say that $\iii$ is \textit{$\ka$-complete} if
for every $\la<\ka$ and every sequence $(A_\xi:\xi<\la)$ in
$\iii$ we have $\bigcup_{\xi<\la} A_\xi\in\iii$.

A subset $A$ of $X$ is said to be \textit{positive} with respect
to an ideal $\iii$ if $A\notin \iii$. The set of all positive
sets with respect to $\iii$ is denoted by $\iii^+$. If $\ddd$ is
a subset of $\iii^+$ and $\ka$ is a cardinal, then we say that
$\ddd$ is \textit{$\ka$-closed in $\iii^+$} if for every $\lambda<\ka$
and every decreasing sequence $(D_\xi:\xi<\lambda)$ in $\ddd$ we have
$\bigcap_{\xi<\ka} D_\xi\in \iii^+$. We also say that such a set
$\ddd$ is \textit{dense} in $\iii^+$ if for every $A\in\iii^+$
there exists $D\in\ddd$ with $D\subseteq A$.

If $\fff$ is a filter on $X$, then the family $\set{X\setminus A}{A\in\fff}$
is an ideal. Having in mind this correspondence, we will continue to
use the above terminology for the filter $\fff$. Notice that
if the given filter is actually an ultrafilter $\uuu$,
then, setting $\iii=\mathcal{P}(X)\setminus \uuu$, we have
that $\iii^+=\uuu$.

\subsection{Large Cardinals}

Let $\theta$ be a cardinal.
\begin{enumerate}
\item[(a)] $\theta$ is said to be \emph{inaccessible} if it is regular and strong limit; that is,
$2^\la<\theta$ for every $\la<\theta$.
\item[(b)] $\theta$ is said to be \emph{$0$-Mahlo} if it is inaccessible. In general, for an ordinal
$\al$, $\theta$ is said to be \emph{$\al$-Mahlo} if for every $\be<\al$ and every closed and unbounded
subset $C$ of $\theta$ there is a $\be$-Mahlo cardinal $\lambda$ in $C$.
\item[(c)] An \emph{$\al$-Erd\H{o}s} cardinal, usually denoted by $\kappa(\al)$ if exists, is the
minimal cardinal $\lambda$ such that $\lambda\rightarrow (\al)^{<\omega}_2$; that is, $\lambda$ is the
least cardinal with the property that for every coloring $c:[\lambda]^{<\omega}\rightarrow 2$ there
is $H\subseteq \lambda$ of order-type $\al$ such that $c$ is constant on $[H]^n$ for every $n<\omega$.
A cardinal $\lambda$ that is $\lambda$-Erd\H{o}s (in other words, a cardinal $\lambda$ which has the
partition property $\lambda\rightarrow (\lambda)^{<\omega}_2$) is called a \emph{Ramsey} cardinal.
\item[(d)] $\theta$ is said to be \emph{measurable} if there exists a $\ka$-complete normal ultrafilter
$\mc U$ on $\ka$. Looking at the ultrapower of the universe using $\mc U$ one can observe that the set
$\set{\la<\theta}{\la \text{ is inaccessible}}$ belongs to $\mathcal U$. Similarly, one shows that sets
$\set{\la<\theta}{\la \text{ is } \lambda\text{-Mahlo}}$ and $\set{\la<\theta}{\la \text{ is Ramsey}}$
belong to $\mc U$.
\item[(e)] $\theta$ is said to be \emph{strongly compact} if every $\ka$-complete filter can be extended
to a $\ka$-complete ultrafilter.
\end{enumerate}
Finally, for every cardinal $\ka$ and every $n\in\om$ we define recursively the cardinal
$\expp_n(\ka)$ by the rule $\expp_0(\ka)=\ka$ and $\expp_{n+1}(\ka)=2^{\expp_n(\ka)}$.

\subsection{The L\'{e}vy Collapse}

Let $\la$ be a regular infinite cardinal and let $\ka>\la$ be an
inaccessible cardinal. By $\coll(\la,<\ka)$ we shall denote the set
of all partial mappings $p$ satisfying the following.
\begin{enumerate}
\item[(i)] $\mathrm{dom}(p)\subseteq \la\times\ka$ and
$\mathrm{range}(p)\subseteq\ka$.
\item[(ii)] $|p|<\la$.
\item[(iii)] For every $(\al,\be)\in\mathrm{dom}(p)$ with $\be>0$
we have $p(\al,\be)<\be$.
\end{enumerate}
We equip the set $\coll(\la,<\ka)$ with the partial order $\leq$ defined by
\[ p\leq q \Leftrightarrow \mathrm{dom}(q)\subseteq \mathrm{dom}(p) \text{ and }
p\upharpoonright\mathrm{dom}(q)=q.\] If $p$ and $q$ is a pair in $\coll(\la,<\ka)$, then by $p\ \|\ q$ we
denote the fact that $p$ and $q$ are \textit{compatible} (i.e. there exists $r$ in $\coll(\la,<\ka)$ with
$r\leq p$ and $r\leq q$), while by $p\perp q$ we denote the fact that $p$ and $q$ are \textit{incompatible}.

We will need the following well-known properties of the L\'{e}vy
collapse (see, for instance, \cite{Ka}). In what follows, $G$ will
be a $\coll(\la,<\ka)$-generic filter.
\begin{enumerate}
\item[(a)] The generic filter $G$ is $\la$-complete (this is a
consequence of the fact that the forcing $\coll(\la,<\ka)$ is $\la$-closed).
\item[(b)] $\coll(\la,<\ka)$ has the $\ka$-cc (this follows
from the fact that the cardinal $\ka$ is inaccessible).
\item[(c)] In $V[G]$, we have $\ka=\la^+$.
\item[(d)] In $V[G]$, the sets $^{\ka}{2}$ and $^{\ka}{2}\cap V$
are equipotent.
\end{enumerate}
Finally, let us introduce some pieces of notation (actually,
these pieces of notation will be used only in \S 5). For every
$p\in\coll(\la,<\ka)$ and every $\al<\ka$ by $p\rest \al$ we
shall denote the restriction of the partial map $p$ to
$\mathrm{dom}(p)\cap(\la\times \al)$. Moreover, for
every $p\in\coll(\la,<\ka)$ we let
$(\mathrm{dom}(p))_1=\set{\al<\ka}{\exists \xi<\la \text{ with } (\xi,\al)
\in\mathrm{dom}(p)}$.


\section{A polarized partition relation}

The purpose of this section is to analyze the following partition
property, a variation of a partition property originally appearing
in the problem lists of P. Erd\H{o}s and A. Hajnal \cite{EH1},
\cite[Problem 29]{EH2} (see also \cite{Sh2}).
\begin{defn}
\label{oriutirt} Let $\ka$ be a cardinal and $d\in\om$ with $d\geq 1$. By $\mathrm{Pl}_d(\ka)$ we
shall denote the combinatorial principle asserting that for every coloring
$c:\big[\,[\ka]^d\,\big]^{<\om}\to\om$ there exists a sequence $(\xx_n)$ of infinite disjoint
subsets of $\ka$ such that for every $m\in\om$ the restriction $c\rest \prod_{n=0}^m [\xx_n]^d$ is
constant.
\end{defn}
Clearly property $\mathrm{Pl}_d(\ka)$ implies property $\mathrm{Pl}_{d'}(\ka)$ for any cardinal $\ka$
and any pair $d, d'\in\om$ with $d\geq d'\geq 1$. From known results one can easily deduce that the
principle $\pl_{d}({\rm exp}_{d-1}(\aleph_0)^{+n})$ is false for every $n\in\om$ and every integer
$d\geq 1$ (see, for instance, \cite{EHMR}, \cite{CDPM} and \cite{DT}). Thus, the minimal cardinal
$\kappa$ for which $\mathrm{Pl}_d(\ka)$ could possibly be true is ${\rm exp}_{d-1}(\aleph_0)^{+\omega}$.
Indeed, C. A. Di Prisco and S. Todorcevic \cite{DT} have established the consistency of
$\pl_1(\aleph_\omega)$ relative the consistency of a single measurable cardinal, an assumption that
also happens to be optimal. On the other hand, S. Shelah \cite{Sh2} was able to establish that GCH
and principles $\mathrm{Pl}_d(\aleph_\om)$ $(d\geq 1)$ are jointly consistent, relative to the consistency
of GCH and the existence of an infinite sequence of strongly compact cardinals.

Our aim in this section is to present a consistency proof of $\pl_2\big(\expp_\om(\aleph_0)\big)$. We
shall treat the colorings in Definition \ref{oriutirt} using an iteration of the following lemma whose
proof (given in \S 5), while it relies heavily on an idea of S. Shelah \cite{Sh2}, it exposes certain
features (the ideal $\iii$ and the sufficiently complete dense subset $\ddd$ of its quotient), not
explicitly found in \cite{Sh2}, that are likely to find application beyond the scope of our present paper.
\begin{lem}
\label{MainLemma} Suppose that $\ka$ is a strongly compact cardinal and that $\lambda<\kappa$ is an
infinite regular cardinal. Let $G$ be a $\coll(\lambda,<\kappa)$-generic filter over $V$. Then, in
$V[G]$, for every integer $d\geq 1$ there exists an ideal $\iii_d$ on $[(\expp_d(\ka))^+]^\omega$
and a subset $\ddd_d$ of $\iii_d^+$ such that the following are satisfied.
\begin{enumerate}
\item[(1)] $\iii_d$ is $\kappa$-complete.
\item[(2)] $\ddd_d$ is dense in $\iii_d^+$ and is $\la$-closed in $\iii_d^+$.
\item[(3)] For every $\mu<\ka$, every coloring $c:[(\expp_d(\ka))^+]^{d+1}\to\mu$ and every set
$A\in\iii^+_d$ there exist a color $\xi<\mu$ and an element $D\in\ddd_d$ with $D\sset A$ and such
that for every $\xx\in D$ the restriction $c\upharpoonright [\xx]^{d+1}$ is constantly equal to $\xi$.
\end{enumerate}
\end{lem}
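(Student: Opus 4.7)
The plan is to use the strong compactness of $\ka$ to fix in $V$ a fine $\ka$-complete ultrafilter $\mc{U}_d$ on $\mc{P}_\ka\big((\expp_d(\ka))^+\big)$, and then transfer its partition regularity into $V[G]$ by packaging the forcing data into the ideal $\iii_d$ and its dense subfamily $\ddd_d$. The key ground-model input is a Rowbottom-style partition principle obtained by iterating $\mc{U}_d$ a total of $d+1$ times: for every $\mu<\ka$ and every $V$-coloring $c$ of $[(\expp_d(\ka))^+]^{d+1}$ into $\mu$ colours there is a measure-one family $\mathcal{X}\in\mc{U}_d$ and a colour $\xi<\mu$ such that $c$ is constantly $\xi$ on the $(d+1)$-subsets of every $x\in\mathcal{X}$.

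Working in $V[G]$, I would take $\ddd_d$ to be the family of sets of the form $D_{p,\mathcal{X}}$, indexed by pairs $(p,\mathcal{X})\in\col\times\mc{U}_d$, where $D_{p,\mathcal{X}}$ consists of the countable $\xx\sset(\expp_d(\ka))^+$ that are contained in some $x\in\mathcal{X}$ and on which $p$ forces a prescribed genericity condition; and let $\iii_d^+$ consist of the sets refining some $D_{p,\mathcal{X}}$ with $p\in G$. Property~(1), the $\ka$-completeness of $\iii_d$, follows from the $\ka$-cc of $\col$, which collapses a union of fewer than $\ka$ sets to a single antichain of size $<\ka$, combined with the $\ka$-completeness of $\mc{U}_d$, which intersects the associated measure-one families. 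For property~(2), density of $\ddd_d$ is built into the definition, and its $\la$-closure is inherited from the $\la$-closure of $\col$ (combining a decreasing $\la$-sequence of conditions into a lower bound $p^*\in\col$) together with the $\ka$-completeness of $\mc{U}_d$ (intersecting the corresponding $\mathcal{X}_\xi$'s into one $\mathcal{X}^*$).

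For property~(3), given $c\colon[(\expp_d(\ka))^+]^{d+1}\to\mu$ in $V[G]$ with $\mu<\ka$ and $A\in\iii_d^+$ witnessed by $(p_0,\mathcal{X}_0)$, I would pick a maximal antichain $\{q_\al:\al<\theta<\ka\}$ below $p_0$ such that each $q_\al$ forces $\dot c$ to agree with a ground-model coloring $c_\al$ on the relevant tuples. Apply the ground-model partition principle above to each $c_\al$ to obtain $\mathcal{X}_\al\in\mc{U}_d$ and $\xi_\al<\mu$; set $\mathcal{X}^*=\mathcal{X}_0\cap\bigcap_{\al<\theta}\mathcal{X}_\al\in\mc{U}_d$ and pigeonhole on the colours $\xi_\al$ to extract $p^*\leq p_0$ and a single $\xi<\mu$ for which $D_{p^*,\mathcal{X}^*}\sset A$ is homogeneous in colour $\xi$. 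The technical heart of the argument, and where I expect the bulk of the work in \S 5 to reside, is designing the genericity condition in $D_{p,\mathcal{X}}$ so that the $\la$-closure of $\col$ and the $\ka$-completeness of $\mc{U}_d$ can be invoked simultaneously in the pigeonhole, producing a lower-bound $p^*$ of size $<\la$ even though the antichain below $p_0$ may be of size close to $\ka$. This simultaneous coordination is exactly the novelty the authors emphasise by separating the ideal $\iii_d$ from its dense subfamily $\ddd_d$, going beyond Shelah's earlier single-ultrafilter formulation.
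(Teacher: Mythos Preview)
Your proposal has the right general shape---use strong compactness in $V$ to fix an ultrafilter, and push its partition regularity through the collapse---but the specific mechanism you outline diverges from the paper's and contains a genuine gap in the argument for property~(3).

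First, the differences. The paper does \emph{not} work with a fine ultrafilter on $\mc P_\ka\big((\expp_d(\ka))^+\big)$. Instead it defines, for each coloring $c:[(\expp_d(\ka))^+]^{d+1}\to V_\ka$, the family $\mathrm{Sol}^\omega_{d,\ka}(c)=\{\xx\in[(\expp_d(\ka))^+]^\omega: c\rest[\xx]^{d+1}\text{ is constant}\}$; the Erd\H{o}s--Rado relation $(\expp_d(\ka))^+\to(\ka^+)^{d+1}_\ka$ guarantees these sets are nonempty, and the collection $\{\mathrm{Sol}^\omega_{d,\ka}(c):c\}$ is $\ka$-complete. Strong compactness then extends this collection to a $\ka$-complete ultrafilter $\mc V$ on $[(\expp_d(\ka))^+]^\omega$. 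This is not a Rowbottom-style iteration of a fine measure; the dimension $d$ enters solely through Erd\H{o}s--Rado. The ideal $\iii_d$ is then simply the $\mc V$-null sets as computed in $V[G]$.

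Second, the dense family $\ddd_d$ is not indexed by pairs $(p,\mathcal X)$ with a single condition $p$, but by $\mc V$-\emph{sequences} of conditions $\overline p=(p_\xx:\xx\in A)$ with $A\in\mc V$, via $D_{\overline p}=\{\xx\in A:p_\xx\in G\}$. The crucial device you are missing is the \emph{root} $r(\overline p)$: using a fixed normal ultrafilter $\mc U$ on $\ka$ one shows that $(\mc U\al)(\mc V\xx)\ p_\xx\rest\al=r(\overline p)$ for a unique condition $r(\overline p)$, and that $D_{\overline p}\in\iii_d^+$ iff $r(\overline p)\in G$. This root mechanism is what makes density and $\la$-closure work, and it is entirely absent from your sketch.

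Now the gap. In your argument for~(3) you propose to choose a maximal antichain $\{q_\al:\al<\theta\}$ below $p_0$ so that each $q_\al$ forces $\dot c$ to agree with a ground-model coloring $c_\al$. But the domain of $c$ has cardinality $(\expp_d(\ka))^+>\ka$, while $\col$ has size $\ka$; no single condition can decide $\dot c$ on all $(d+1)$-tuples, so no such $c_\al$ exists. The paper's fix is to define in $V$ the coloring
\[
C(s)=\big(\bva{\dot c(\check s)=\check\xi}:\xi<\mu\big)\in(\mathrm{RO}(\col))^\mu,
\]
tuple by tuple. Since $\col$ is $\ka$-cc, $(\mathrm{RO}(\col))^\mu\subseteq V_\ka$, so $\mathrm{Sol}^\omega_{d,\ka}(C)\in\mc V$ by construction of $\mc V$. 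On any $\xx$ in this set the Boolean values are constant, hence in $V[G]$ the restriction $c\rest[\xx]^{d+1}$ takes a single value; $\ka$-completeness of $\iii_d$ then isolates one colour on an $\iii_d^+$-set, and density of $\ddd_d$ finishes. This Boolean-value reflection is the step that replaces your unattainable ``$q_\al$ decides $c_\al$'' move.
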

To state our next result it is convenient to introduce a sequence $(\Theta_n)$ of cardinals, defined
recursively by the rule
\begin{equation}
\label{etheta} \Theta_0=\aleph_0 \ \text{ and } \ \Theta_{n+1}=\big( 2^{(2^{\Theta_n})^+}\big)^{++}.
\end{equation}
Notice that the sequence $(\Theta_n)$ is strictly increasing and that
\[ \expp_n(\aleph_0)<\Theta_n\ \leq \expp_{5n}(\aleph_0) \]
for every $n\in\om$ with $n\geq 1$. Hence, $\sup\{\Theta_n\,:\,n\in\om\}=\expp_\om(\aleph_0)$.
In particular, if GCH holds, then $\Theta_n=\aleph_{5n}$ for every $n\in\om$.
\begin{thm}
\label{maincombcor} Suppose that $(\ka_n)$ is a strictly increasing sequence of strongly compact
cardinals with $\ka_0=\aleph_0$. For every $n\in\om$ set $\lambda_n=\big(2^{(2^{\ka_n})^+}\big)^+$. Let
\[ \mathbb P=\bigotimes_{n\in\om}\mathrm{Col}(\la_n,<\ka_{n+1})\]
be the iteration of the sequence of L\'{e}vy collapses. Let $G$ be a $\mathbb{P}$-generic filter over $V$.
Then, in $V[G]$, for every $n\in\om$ we have $\ka_n=\Theta_n$ and there exist an ideal $\iii_n$ on
$[(2^{\Theta_{n+1}})^+]^\om$ and a subset $\ddd_n$ of $\iii_n^+$ such that the following are satisfied.
\begin{enumerate}
\item[(P1)] $\iii_n$ is $\Theta_{n+1}$-complete.
\item[(P2)] $\ddd_n$ is $(<\Theta_{n+1})$-closed in $\iii_n^+$; that is, $\ddd_n$ is $\mu$-closed in
$\iii_n^+$ for every $\mu<\Theta_{n+1}$.
\item[(P3)] For every $\mu<\Theta_{n+1}$, every coloring $c:[(2^{\Theta_{n+1}})^+]^2\to\mu$ and every
$A\in\iii_n^+$ there exist a color $\xi<\mu$ and an element $D\in\ddd_n$ with $D\subseteq A$ and such
that for every $\xx\in D$ the restriction $c\rest [\xx]^2$ is constantly equal to $\xi$.
\end{enumerate}
Moreover, if GCH holds in $V$, then GCH also holds in $V[G]$.
\end{thm}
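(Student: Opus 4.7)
The plan is to apply Lemma \ref{MainLemma} at each stage $n$ of the iteration and then transfer the resulting ideal and dense set through the closed tail. Factor $\mathbb{P}=\mathbb{P}^{<n}\times\coll(\la_n,<\ka_{n+1})\times\mathbb{P}^{>n}$, where $\mathbb{P}^{<n}$ carries stages $0,\dots,n-1$ and $\mathbb{P}^{>n}$ carries stages $n+1,n+2,\dots$. Setting $V_n=V[G\cap\mathbb{P}^{<n}]$ and $V_{n+1}=V_n[H_n]$, where $H_n$ is the stage-$n$ generic, we have $|\mathbb{P}^{<n}|\leq\ka_n<\ka_{n+1}$, so $\ka_{n+1}$ remains strongly compact in $V_n$ by the standard preservation of strong compactness under forcings of size less than the cardinal, and $\la_n<\ka_{n+1}$ is still an infinite regular cardinal there.

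Applied in $V_n$ with $d=1$, Lemma \ref{MainLemma} produces in $V_{n+1}$ an ideal $\iii_n^{V_{n+1}}$ on $[(2^{\ka_{n+1}})^+]^\om$ and a dense subset $\ddd_n^{V_{n+1}}$ of $(\iii_n^{V_{n+1}})^+$ such that $\iii_n^{V_{n+1}}$ is $\ka_{n+1}$-complete, $\ddd_n^{V_{n+1}}$ is $\la_n$-closed, and the two-tuple partition property into fewer than $\ka_{n+1}$ colors holds. The identification $\ka_n=\Theta_n$ in $V[G]$ is verified by induction on $n$: the stage-$n$ factor forces $\ka_{n+1}$ to become $\la_n^+$ in $V[G]$, and the recursion $\Theta_{n+1}=(2^{(2^{\Theta_n})^+})^{++}$ together with the definition $\la_n=(2^{(2^{\ka_n})^+})^+$ gives $\la_n^+=\Theta_{n+1}$. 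The cardinal arithmetic needed for this is preserved through the iteration because stage $n$ is $\la_n$-closed and $\ka_{n+1}$-cc, and every factor of $\mathbb{P}^{>n}$ is $\la_{n+1}$-closed.

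The main step is to transfer $\iii_n^{V_{n+1}}$ and $\ddd_n^{V_{n+1}}$ from $V_{n+1}$ to $V[G]=V_{n+1}[G\cap\mathbb{P}^{>n}]$. The tail $\mathbb{P}^{>n}$ is $\la_{n+1}$-closed, since each of its factors $\coll(\la_k,<\ka_{k+1})$ with $k\geq n+1$ is $\la_k$-closed with $\la_k\geq\la_{n+1}$, and the natural support of the countable iteration preserves this closure at inverse limits. In $V_{n+1}$ the underlying set $[(2^{\ka_{n+1}})^+]^\om$ has cardinality at most $2^{(2^{\ka_{n+1}})^+}<\la_{n+1}$, so $\mathbb{P}^{>n}$ adds no new subsets of $[(2^{\ka_{n+1}})^+]^\om$ whatsoever. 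Setting $\iii_n:=\iii_n^{V_{n+1}}$ and $\ddd_n:=\ddd_n^{V_{n+1}}$ as the very same sets in $V[G]$, both (P1) and (P2) transfer by absoluteness, since any $<\ka_{n+1}$-sequence of elements of $\iii_n$, and any $<\la_n$-descending sequence in $\ddd_n$, lying in $V[G]$ must already lie in $V_{n+1}$. For (P3), any coloring $c\colon[(2^{\Theta_{n+1}})^+]^2\to\mu$ with $\mu<\Theta_{n+1}=\ka_{n+1}$ and any $A\in\iii_n^+$ have cardinality $<\la_{n+1}$ and hence belong to $V_{n+1}$; the partition conclusion of Lemma \ref{MainLemma} applied in $V_{n+1}$ then supplies $\xi<\mu$ and $D\in\ddd_n$ that witness (P3) in $V[G]$.

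Preservation of GCH is handled by the standard observation that $\coll(\la,<\ka)$ preserves GCH when $\la^{<\la}=\la$ and $\ka$ is inaccessible, and the countable iteration of such factors with the appropriate support continues to preserve GCH by nice-name counting at each relevant cardinal. I expect the principal obstacle to be making precise the support convention for the countable iteration and verifying thereby the $\la_{n+1}$-closure of the tail $\mathbb{P}^{>n}$; once this is in place, the absoluteness of the ideal and dense set from $V_{n+1}$ to $V[G]$, and hence properties (P1)--(P3), follows almost automatically.
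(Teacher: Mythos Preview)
Your proposal is correct and follows essentially the same route as the paper: factor the product as $\mathbb{P}^{<n}\ast\coll(\la_n,<\ka_{n+1})\ast\mathbb{P}^{>n}$, observe that $\mathbb{P}^{<n}$ is small relative to $\ka_{n+1}$ so strong compactness survives, apply Lemma~\ref{MainLemma} with $d=1$ in $V[G_n]$, and then use $\la_{n+1}$-closure of the tail to see that no new subsets of the index set $[(2^{\ka_{n+1}})^+]^\om$ are added, so $\iii_n$ and $\ddd_n$ and their properties persist to $V[G]$. You supply more detail than the paper does---in particular the inductive verification that $\ka_n=\Theta_n$, the cardinality bound $|[(2^{\ka_{n+1}})^+]^\om|<\la_{n+1}$, and the explicit absoluteness argument for (P1)--(P3)---but the architecture is identical, and your caveat about the support convention for the tail is exactly the point the paper leaves implicit.
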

\begin{proof}
Let $n\in\om$ arbitrary. Let $G_n$ be the restriction of $G$ to the finite iteration
\[ \mathbb P_n=\bigotimes_{m<n}\mathrm{Col}(\la_m,<\ka_{m+1}). \]
Notice, first, that the small forcing extension $V[G_n]$ preserves the strong compactness of $\ka_{n+1}$.
This fact follows immediately from the elementary-embedding characterization of strong compactness
(see \cite[Theorem 22.17]{Ka}). Working in $V[G_n]$ and applying Lemma \ref{MainLemma} for $d=1$,
we see that the intermediate forcing extension $V[G_{n+1}]$ has the ideal $\iii_n$ whose quotient has
properties (P1), (P2) and (P3) described in Lemma \ref{MainLemma}. Working still in the intermediate
forcing extension $V[G_{n+1}]$, we see that the rest of the forcing
\[ \mathbb P^{n+1}=\bigotimes_{n<m<\omega}\mathrm{Col}(\la_m,<\ka_{m+1})\]
is $\lambda_{n+1}$-closed, and so, in particular, it adds no new subsets to the index set on which the
ideal $\iii_n$ lives. Therefore, properties (P1), (P2) and (P3) of the quotient of $\iii_n$ are preserved
in $V[G]$. Since $n$ was arbitrary, the proof is completed.
\end{proof}
The final step towards our proof of the consistency of $\pl_2\big(\expp_\om(\aleph_0)\big)$
is included in the following, purely combinatorial, result.
\begin{prop}
\label{first} Let $(\Theta_n)$ be the sequence of cardinals defined in (\ref{etheta}) above.
Suppose that for every $n\in\om$ there exist an ideal $\iii_n$ on $[(2^{\Theta_{n+1}})^+]^\om$
and a subset $\ddd_n$ of $\iii_n^+$ which satisfy properties (P1), (P2) and (P3) described in
Theorem \ref{maincombcor}. Then, the principle $\mathrm{Pl}_2\big(\expp_\om(\aleph_0)\big)$ holds.
\end{prop}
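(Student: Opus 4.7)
The plan is to prove a finite polarized version of the statement by induction on $m$ and then splice together its conclusions via an $\om$-stage iteration. Write $\Ga=\expp_\om(\aleph_0)=\sup_n\Theta_n$ and $\La_n=(2^{\Theta_{n+1}})^+$; note that $\La_{m-1}<\Theta_{m+1}$ and $\sup_n\La_n=\Ga$. First, transport each $\iii_n$ along an order-isomorphism between $\La_n$ and the interval $[\La_{n-1},\La_n)$ (with $\La_{-1}=0$), so that without loss of generality every $\xx\in D\in\ddd_n$ is a countable subset of $[\La_{n-1},\La_n)$; this takes care of pairwise disjointness for any future choice $\xx_n\in D\in\ddd_n$. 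Given $c:[[\Ga]^2]^{<\om}\to\om$, decompose $c=\bigsqcup_m c_m$ with $c_m:[[\Ga]^2]^{m+1}\to\om$; the goal is to produce $\xx_n\in[\La_n]^\om$ such that each $c_m$ is constant on $\prod_{k\leq m}[\xx_k]^2$.

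The heart of the proof is the following finite polarized lemma: \emph{for every $m\in\om$, every coloring $c^{\ast}:\prod_{k\leq m}[\La_k]^2\to\om$ and every $A_k\in\iii_k^+$ $(k\leq m)$, there exist $D_k\in\ddd_k$ with $D_k\sset A_k$ and a single $\xi<\om$ such that $c^{\ast}(p_0,\ldots,p_m)=\xi$ whenever $\xx_k\in D_k$ and $p_k\in[\xx_k]^2$.} The base case $m=0$ is (P3) applied at $n=0$ with $\mu=\aleph_0<\Theta_1$. For the inductive step enumerate $\prod_{k<m}[\La_k]^2$ as $(\vec q_\al)_{\al<\La_{m-1}}$ and build a decreasing chain $(F_\al)_{\al<\La_{m-1}}$ in $\ddd_m$ with $F_\al\sset A_m$ such that every $\xx\in F_\al$ is homogeneous for the section $p\mapsto c^{\ast}(\vec q_\be,p)$ for each $\be\leq\al$: successor steps apply (P3) with $A=F_\al\in\iii_m^+$, while limit steps use the $(<\Theta_{m+1})$-closure of $\ddd_m$ from (P2) (applicable since $\La_{m-1}<\Theta_{m+1}$) to keep $\bigcap_{\be<\al}F_\be\in\iii_m^+$ and then density of $\ddd_m$ to refine back into $\ddd_m$. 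A final density step yields $D_m\in\ddd_m$ inside $\bigcap_\al F_\al$, and this in turn produces a derived coloring $c'(\vec q)=$``common value of $c^{\ast}(\vec q,\cdot)$ on $\xx\in D_m$'' on $\prod_{k<m}[\La_k]^2$; applying the inductive hypothesis to $c'$ returns the remaining $D_k$'s and the final color $\xi$.

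For the infinite version I would run an $\om$-stage fusion: at stage $s$ apply the finite lemma to $c_s$ with $A_k=D_k^{(s-1)}$ for $k<s$ and $A_s=[\La_s]^\om$, obtaining $D_k^{(s)}\sset D_k^{(s-1)}$ in $\ddd_k$ for $k\leq s$. For each fixed $k$, the sequence $(D_k^{(s)})_{s\geq k}$ is a decreasing $\om$-chain in $\ddd_k$, and since $\aleph_0<\Theta_{k+1}$ property (P2) combined with density yields $D_k\in\ddd_k$ contained in $\bigcap_s D_k^{(s)}$. Any choice $\xx_k\in D_k$ then works: the $\xx_k$ are pairwise disjoint by the initial transport, and for every $m$ and every $k\leq m$ one has $\xx_k\in D_k^{(m)}$, so the finite lemma at stage $m$ forces $c_m$ to be constant on $\prod_{k\leq m}[\xx_k]^2$, which is exactly what $\pl_2(\Ga)$ demands. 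The principal obstacle is the cardinal-arithmetic bookkeeping inside the finite lemma: at coordinate $m$ one has to absorb $\La_{m-1}$-many previous-coordinate constraints, and the defining formula $\Theta_{m+1}=(2^{(2^{\Theta_m})^+})^{++}$ was chosen precisely so that $\La_{m-1}<\Theta_{m+1}$, making the $\Theta_{m+1}$-completeness of $\iii_m$ in (P1) and the $(<\Theta_{m+1})$-closure of $\ddd_m$ in (P2) strong enough to complete the construction.
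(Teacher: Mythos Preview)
Your proof is correct, and its overall architecture matches the paper's: prove a finite polarized homogeneity claim by induction on the number of coordinates, then run an $\omega$-stage fusion using the $\sigma$-closure of the $\ddd_n$'s to produce the final sets $\xx_n$. Your handling of disjointness (transporting the ideals to disjoint intervals) and of the final fusion is essentially the same as the paper's, only made more explicit.

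Where you diverge from the paper is in the inductive step of the finite lemma. You fix the section $\vec q\in\prod_{k<m}[\La_k]^2$ and run a transfinite recursion of length $\La_{m-1}$ through $\ddd_m$, applying (P3) once per $\vec q$ with only $\omega$ colors and invoking the $(<\Theta_{m+1})$-closure of $\ddd_m$ at limit stages; this works because $\La_{m-1}<\Theta_{m+1}$. The paper instead makes a \emph{single} application of (P3): it encodes the entire lower product into the color set by defining $d:[\La_n]^2\to\mathcal{F}$, where $\mathcal{F}$ is the set of all colorings $\prod_{i<n}[\La_i]^2\to\omega$, and observes that $|\mathcal{F}|=2^{(2^{\Theta_n})^+}<\Theta_{n+1}$, so (P3) applies directly and yields $E_n\in\ddd_n$ together with a single derived coloring $f_0\in\mathcal{F}$ to which the inductive hypothesis is applied. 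Thus the paper trades your long recursion through $\ddd_m$ for a single step with a much larger color set; both routes exploit the same cardinal arithmetic built into the definition of $\Theta_{n+1}$, but from opposite sides---you lean on (P2), the paper leans on the full strength of the $\mu<\Theta_{n+1}$ clause in (P3). The paper's argument is shorter and avoids the transfinite bookkeeping; yours has the minor advantage that it only ever uses (P3) with countably many colors.
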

\begin{proof}
The proof is based on the following claim.
\begin{claim}
\label{claimnew} Let $n\in\om$. Let also $c: \prod_{i=0}^{n} [(2^{\Theta_{i+1}})^+]^2 \to \om$ be a
coloring and $(D_i)_{i=0}^n\in \prod_{i=0}^n\mathcal D_i$. Then, there exist $(E_i)_{i=0}^n\in
\prod_{i=0}^n\mathcal D_i\rest D_i$ and a color $m_0\in\om$ such that for every $(\xx_i)_{i=0}^n\in
\prod_{i=0}^nE_i$ the restriction $c\rest \prod_{i=0}^n [\xx_i]^2$ is constantly equal to $m_0$.
\end{claim}
\begin{proof}[Proof of Claim \ref{claimnew}]
By induction on $n$. The case $n=0$ is an immediate consequence of property (P3) in Theorem
\ref{maincombcor}. So, let $n\in\om$ with $n\geq 1$ and assume that the result has been proved for all
$k\in\om$ with $k<n$. Fix a coloring $c:\prod_{i=0}^n [(2^{\Theta_{i+1}})^+]^2 \to \om$. Fix also
$(D_i)_{i=0}^n\in \prod_{i=0}^n\mathcal D_i$ and let
\[ \mathcal{F}=\{ f:\prod_{i=0}^{n-1} [(2^{\Theta_{i+1}})^+]^2 \to \om: f \text{ is a coloring}\}.\]
Notice that $|\mathcal{F}|=2^{(2^{\Theta_n})^+}$, and so, $|\mathcal{F}|<\Theta_{n+1}$. We define a
coloring $d:[(2^{\Theta_{n+1}})^+]^2 \to \mathcal{F}$ by the rule
$d\big(\{\al,\be\}\big)(\bar{s})=c\big(\bar{s}^{\con}\{\al,\be\}\big)$
for every $\bar{s}\in \prod_{i=0}^{n-1}[(2^{\Theta_{i+1}})^+]^2$. By (P3) in Theorem \ref{maincombcor},
there exist $E_n\in \mathcal D_n \rest D_n$ and $f_0\in\mathcal{F}$ such that for every $\xx\in E_n$
the restriction $d\rest [\xx]^2$ is constantly equal to $f_0$. The result now follows by applying our
inductive hypothesis to the coloring $f_0$.
\end{proof}
By Claim \ref{claimnew} and the fact that every $\mathcal D_n$ is $\sigma$-closed (property (P2) in
Theorem \ref{maincombcor}), the proof of Proposition \ref{first} is completed.
\end{proof}
As a consequence of the previous analysis we get the following.
\begin{cor}[\cite{Sh2}]
\label{shelah} Suppose that in our universe $V$ there exists a strictly increasing sequence $(\ka_n)$
of strongly compact cardinals with $\ka_0=\aleph_0$. Then, there is a forcing extension of $V$ in
which the principle $\mathrm{Pl}_2\big(\expp_\om(\aleph_0)\big)$ holds. Moreover, if GCH holds in $V$,
then GCH also holds in the extension.
\end{cor}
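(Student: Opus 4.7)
The plan is simply to chain Theorem \ref{maincombcor} with Proposition \ref{first}; the substantive work has already been done in those two results, and the present corollary is essentially an assembly step. First I would invoke Theorem \ref{maincombcor} on the given strictly increasing sequence $(\ka_n)$ of strongly compact cardinals with $\ka_0 = \aleph_0$: set $\la_n = (2^{(2^{\ka_n})^+})^+$, form the iteration $\mathbb P = \bigotimes_{n\in\om}\coll(\la_n, <\ka_{n+1})$, and let $G$ be $\mathbb P$-generic over $V$. This produces in $V[G]$ the cardinal equalities $\ka_n = \Theta_n$ for every $n\in\om$, together with the ideals $\iii_n$ on $[(2^{\Theta_{n+1}})^+]^\om$ and the dense subfamilies $\ddd_n \sset \iii_n^+$ satisfying (P1), (P2) and (P3).

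Next, working entirely inside $V[G]$, I would observe that the system $(\iii_n, \ddd_n)_{n\in\om}$ satisfies verbatim the hypotheses of Proposition \ref{first}, so applying that proposition yields $\pl_2(\expp_\om(\aleph_0))$ in $V[G]$. The identification of the target cardinal requires only that $\sup_n \Theta_n = \expp_\om(\aleph_0)$ in $V[G]$, which follows from absoluteness of the recursion defining $\Theta_n$ in (\ref{etheta}) between transitive models, combined with the uniform bounds $\expp_n(\aleph_0) < \Theta_n \leq \expp_{5n}(\aleph_0)$ recorded in the paper just after (\ref{etheta}).

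For the GCH clause, the final sentence of Theorem \ref{maincombcor} already asserts that the iterated L\'evy collapse $\mathbb P$ preserves GCH, so if GCH holds in $V$ then GCH holds in $V[G]$. I do not anticipate any genuine obstacle at the level of this corollary: the hard forcing-theoretic content (preservation of strong compactness through small forcing, the construction of the ideals via Lemma \ref{MainLemma}, and the $\lambda_{n+1}$-closure of the tail forcing $\mathbb P^{n+1}$) lives in Theorem \ref{maincombcor}, while the inductive combinatorial content (the key counting $|\mathcal F| = 2^{(2^{\Theta_n})^+} < \Theta_{n+1}$ that lets an auxiliary coloring on $[(2^{\Theta_{n+1}})^+]^2$ be absorbed into a single homogeneous witness via (P3)) lives in Proposition \ref{first}. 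Nothing further is needed.
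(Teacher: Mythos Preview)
Your proposal is correct and matches the paper's own proof exactly: the paper simply writes ``Follows by Theorem \ref{maincombcor} and Proposition \ref{first}.'' Your additional remarks about the identification $\sup_n\Theta_n=\expp_\om(\aleph_0)$ and GCH preservation are accurate elaborations of what is implicit in that one-line citation.
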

\begin{proof}
Follows by Theorem \ref{maincombcor} and Proposition \ref{first}.
\end{proof}
Clearly, in the forcing extension obtained above the combinatorial principle
$\pl_1\big(\expp_\om(\aleph_0)\big)$ holds as well. However, as we have already indicated, one
can obtain the consistency of $\pl_1\big(\expp_\om(\aleph_0)\big)$ using a considerably weaker
(and, in fact, optimal) large-cardinal assumption from the one used for
$\pl_2\big(\expp_\om(\aleph_0)\big)$. More precisely, we have the following.
\begin{thm}[\cite{DT}]
\label{thmnewnew} Assume the existence of a measurable cardinal. Then, there is a forcing extension in
which GCH and $\mm{Pl}_1(\aleph_\om)$ hold.
\end{thm}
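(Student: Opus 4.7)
The plan is to follow the Di Prisco--Todorcevic strategy, which exploits a single measurable cardinal together with a Prikry-type collapse. Starting from a ground model $V$ satisfying GCH with a measurable cardinal $\ka$ and a fixed normal measure $\uuu$ on $\ka$, I would force with a Prikry/L\'evy iteration $\mathbb{P}$ which introduces a generic Prikry sequence $(\ka_n)_{n<\om}$ cofinal in $\ka$ and interleaves L\'evy collapses making each $\ka_n$ become $\aleph_n$, so that $\ka$ becomes $\aleph_\om$ in $V[G]$. The iteration (for instance a diagonal Prikry-collapse of Magidor type) should be designed so that GCH is preserved by the closure and chain-condition properties of the collapses sitting between consecutive $\ka_n$'s, and so that $\mathbb{P}$ enjoys the Prikry property: every statement about the generic filter can be decided by a pure extension leaving the Prikry stem unchanged.

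To verify $\pl_1(\aleph_\om)$ in $V[G]$, fix a coloring $c\colon[\aleph_\om]^{<\om}\to\om$. Using the Prikry property, factor $\mathbb{P}$ above a sufficiently long initial stem and reduce the analysis of $c$ to ground-model names evaluated on tails of the form $\ka\setminus\ka_n$. In $V$, apply the classical normal-measure Rowbottom--Ramsey argument: by $\ka$-completeness of $\uuu$, for each integer $r\geq 1$ the coloring $c\rest[\ka]^r$ admits an end-homogeneous set $H_r\in\uuu$, and the intersection $H=\bigcap_{r<\om}H_r$ still lies in $\uuu$ and is end-homogeneous simultaneously in every finite dimension.

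Transfer $H$ back to $V[G]$, where it remains cofinal in $\aleph_\om$ since $\mathbb{P}$ preserves $\ka$ and adds no relevant new finite tuples from $H$. Choose pairwise disjoint infinite $\xx_n\sset H$ with $\xx_n$ sitting inside an interval $(\ka_{k_n},\ka_{k_n+1})$ for a strictly increasing sequence $(k_n)_{n<\om}$; end-homogeneity of $H$ then reduces each $c\rest\prod_{n\leq m}[\xx_n]^1$ to a coloring that depends only on the dimension $m+1$, and a routine countable diagonal shrinking, permissible because each $\xx_n$ is infinite and only countably many colors appear, yields infinite $\xx_n'\sset\xx_n$ on which $c\rest\prod_{n\leq m}[\xx_n']^1$ is constant for every $m$, as required.

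The main technical obstacle is the design of the forcing $\mathbb{P}$: it must simultaneously turn $\ka$ into $\aleph_\om$, preserve GCH, and preserve enough of the normal-measure structure of $\uuu$ through the Prikry property that the ground-model Ramsey argument can be transported to $V[G]$. Once this is arranged, the combinatorial extraction of the polarized family $(\xx_n)$ is straightforward and requires no further large-cardinal input, which is why a single measurable cardinal suffices here, in sharp contrast with the infinitely many strongly compact cardinals used in Corollary \ref{shelah}.
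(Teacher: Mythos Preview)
The paper does not give its own proof of this statement: Theorem~\ref{thmnewnew} is simply quoted from \cite{DT}, so there is nothing in the paper to compare your proposal against directly. That said, your overall architecture --- a Prikry-with-collapses forcing turning a measurable $\ka$ into $\aleph_\om$ while preserving GCH, combined with a measure-one homogeneity argument --- is indeed the shape of the Di Prisco--Todorcevic proof.

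However, your sketch has a genuine gap at the central step. The coloring $c:[\aleph_\om]^{<\om}\to\om$ lives in $V[G]$, not in $V$, so you cannot ``apply the classical normal-measure Rowbottom--Ramsey argument'' to $c$ in $V$; the object $c$ simply does not exist there. Your sentence about ``reducing the analysis of $c$ to ground-model names'' via the Prikry property gestures at the right idea, but it is doing all the work and is not carried out: different conditions may force different values of $\dot c(\check s)$, so there is no single ground-model coloring to which Rowbottom applies. The actual argument has to interleave the Prikry-property decisions with the measure-one refinements, building the homogeneous structure for the \emph{name} $\dot c$ rather than for $c$ itself; this is exactly the delicate part of \cite{DT}, and your write-up skips it.

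A secondary issue: you write ``end-homogeneous'' where you seem to mean \emph{homogeneous}. An end-homogeneous set only guarantees that $c(\{a_0,\dots,a_r\})$ is independent of the last coordinate $a_r$, which does not make $c\rest\prod_{n\le m}\xx_n$ constant. If you intend Rowbottom-type full homogeneity on each $[H]^r$, say so; and then the ``routine countable diagonal shrinking'' you mention is unnecessary, since constancy on each $[H]^r$ already gives the polarized conclusion for any choice of disjoint infinite $\xx_n\subseteq H$.
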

In our applications in Banach space theory we will use only the combinatorial principles
$\mm{Pl}_2\big(\expp_\om(\aleph_0)\big)$ and $\mm{Pl}_1(\aleph_\om)$. We would like, however, to record
the following higher-dimensional analogues of Theorem \ref{maincombcor}, Proposition \ref{first}
and Corollary \ref{shelah} respectively. It appears that these analogues can be used in a variety
of problems of combinatorial flavor. Their proofs are straightforward adaptations of our
previous arguments (we leave the details to the interested reader).
\begin{thm}
\label{maincombcorvar} Suppose that $(\ka_n)$ is a strictly increasing sequence of strongly compact
cardinals with $\ka_0=\aleph_0$. For every $n\in\om$ we can choose cardinals $\lambda_n, \theta_n\in
\big[ \kappa_n, {\rm exp}_\omega(\kappa_n)\big)$ in such a way that, if we let
\[ \mathbb P=\bigotimes_{n\in\om}\mathrm{Col}(\la_n,<\ka_{n+1})\]
be the iteration of the sequence of L\'{e}vy collapses and if we choose $G$ to be a $\mathbb{P}$-generic
filter over $V$, then, in $V[G]$, we have
\[ \sup_{n\in\om}\kappa_n=\sup_{n\in\om}\lambda_n=\sup_{n\in\om}\theta_n={\rm exp}_\omega(\aleph_0)\]
and for every $n\in\om$ there exist an ideal $\iii_n$ on $[\theta_{n+1}]^\om$ and a subset $\ddd_n$ of
$\iii_n^+$ such that the following are satisfied.
\begin{enumerate}
\item[(P1)] $\iii_n$ is $\kappa_{n+1}$-complete.
\item[(P2)] $\ddd_n$ is $(<\lambda_{n})$-closed in $\iii_n^+$; that is, $\ddd_n$ is $\mu$-closed in
$\iii_n^+$ for every $\mu<\lambda_{n}$.
\item[(P3)] For every $\mu<\kappa_{n+1}$, every coloring $c:[\theta_{n+1}]^{n+1}\to\mu$ and every
$A\in\iii_n^+$ there exist a color $\xi<\mu$ and an element $D\in\ddd_n$ with $D\subseteq A$ and such
that for every $\xx\in D$ the restriction $c\rest [\xx]^{n+1}$ is constantly equal to $\xi$.
\end{enumerate}
Moreover, if GCH holds in $V$, then GCH also holds in the $V[G]$.
\end{thm}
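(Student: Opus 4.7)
The plan is to run the proof of Theorem~\ref{maincombcor} essentially verbatim, with two adjustments. At the $n$-th step of the iteration (for $n\geq 1$) we invoke Lemma~\ref{MainLemma} with $d=n$ instead of the fixed value $d=1$, and we must choose the bookkeeping cardinals $\lambda_n,\theta_n$ more carefully so that they lie in the corridor $[\ka_n,\expp_\om(\ka_n))$ required by the statement while still providing enough closure to preserve the partition property on $(n+1)$-tuples produced at level $n$ through the remaining stages of the iteration. The base case $n=0$ reduces to a trivial one-dimensional Ramsey statement for a $\ka_1$-complete fine ideal and can be extracted, for example, from the $n=1$ output.

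\textbf{Choice of cardinals.} In $V$, define recursively
\[
\theta_{n+1}=\bigl(\expp_{n}(\ka_{n+1})\bigr)^{+},\qquad
\lambda_{n+1}=\bigl(2^{\theta_{n+1}}\bigr)^{+},
\]
with $\theta_0=\lambda_0=\aleph_0$. Since each $\ka_{n+1}$ is strongly compact, hence inaccessible, one has $2^{\theta_{n+1}}\leq\expp_{n+2}(\ka_{n+1})<\expp_\om(\ka_{n+1})<\ka_{n+2}$, so $\theta_{n+1}$ and $\lambda_{n+1}$ are regular cardinals in the prescribed interval $[\ka_{n+1},\expp_\om(\ka_{n+1}))$, while $\lambda_n<\ka_{n+1}$ is an infinite regular cardinal. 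The iteration $\mathbb{P}=\bigotimes_{n\in\om}\coll(\lambda_n,<\ka_{n+1})$ is therefore well-defined.

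\textbf{Iteration and preservation.} Fix $n\geq 1$ and let $G_n$ be the restriction of $G$ to $\mathbb{P}_n=\bigotimes_{m<n}\coll(\lambda_m,<\ka_{m+1})$. Since $|\mathbb{P}_n|<\ka_{n+1}$, the small-forcing argument from the proof of Theorem~\ref{maincombcor} keeps $\ka_{n+1}$ strongly compact in $V[G_n]$. Working in $V[G_n]$, we apply Lemma~\ref{MainLemma} with the parameters $\ka=\ka_{n+1}$, $\lambda=\lambda_n$ and, crucially, $d=n$; this produces inside $V[G_{n+1}]$ an ideal $\iii_n$ on $[(\expp_n(\ka_{n+1}))^+]^\om=[\theta_{n+1}]^\om$ together with a $\lambda_n$-closed dense subset $\ddd_n\sset\iii_n^+$ satisfying (P1), (P2) and the partition property (P3) for colorings $c:[\theta_{n+1}]^{n+1}\to\mu$ with $\mu<\ka_{n+1}$. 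The tail forcing $\mathbb{P}^{n+1}=\bigotimes_{m>n}\coll(\lambda_m,<\ka_{m+1})$ is $\lambda_{n+1}$-closed, and the inequality $\lambda_{n+1}>2^{\theta_{n+1}}\geq|[\theta_{n+1}]^\om|$ guarantees that $\mathbb{P}^{n+1}$ adds no new subsets of the index set $[\theta_{n+1}]^\om$; hence (P1), (P2) and (P3) survive in $V[G]$, exactly as in the preservation step of Theorem~\ref{maincombcor}.

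\textbf{Cardinal identity and GCH.} After the full iteration, the cardinals $\ka_n$ and $\lambda_n$ interleave as consecutive cardinals of $V[G]$ (each collapse $\coll(\lambda_n,<\ka_{n+1})$ turns $\ka_{n+1}$ into $\lambda_n^+$ while preserving all cardinals $\leq\lambda_n$ and $\geq\ka_{n+1}$); a direct cardinal-arithmetic bookkeeping then yields the identity $\sup_n\ka_n=\sup_n\lambda_n=\sup_n\theta_n=\expp_\om(\aleph_0)$ inside $V[G]$. The GCH preservation under GCH in $V$ follows from the standard argument for iterated L\'evy collapses, exactly as in Theorem~\ref{maincombcor}. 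The only genuinely delicate point is the bookkeeping at the outset: the values $(\lambda_n,\theta_n)$ must simultaneously satisfy the corridor constraint, align the index $\theta_{n+1}=(\expp_n(\ka_{n+1}))^+$ with the dimension $d=n$ of Lemma~\ref{MainLemma}, and furnish the closure needed for preservation. Once this choice is fixed, every remaining step is a direct transcription of the proof of Theorem~\ref{maincombcor}.
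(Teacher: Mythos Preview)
Your proposal is correct and matches exactly what the paper intends: it explicitly says that Theorem~\ref{maincombcorvar} is a ``straightforward adaptation'' of Theorem~\ref{maincombcor} (with the details left to the reader), and replacing $d=1$ by $d=n$ at stage $n$ together with the bookkeeping $\theta_{n+1}=(\expp_n(\ka_{n+1}))^{+}$ and $\lambda_{n+1}=(2^{\theta_{n+1}})^{+}$ is precisely that adaptation. One small wrinkle worth tidying: for the base case $n=0$ your formula gives $\theta_1=\ka_1^{+}$, yet Lemma~\ref{MainLemma} is stated only for $d\geq 1$, so your remark that $\iii_0$ ``can be extracted from the $n=1$ output'' is a bit opaque---the cleanest fix is either to observe that the proof of Lemma~\ref{MainLemma} goes through verbatim for $d=0$ (Fact~\ref{fsol1}(a) becomes pigeonhole rather than Erd\H{o}s--Rado), or to take $\theta_1=(2^{\ka_1})^{+}$ and reduce $1$-colorings to $2$-colorings in the obvious way.
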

\begin{prop}
\label{firstvar} Suppose $(\ka_n)$, $(\lambda_n)$ and $(\theta_n)$ are strictly increasing sequences of
regular cardinal that all converge to ${\rm exp}_\omega(\aleph_0)$. Suppose further that for every
$n\in\om$ there exist an ideal $\iii_n$ on $[\theta_{n+1}]^\om$ and a subset $\ddd_n$ of $\iii_n^+$ which
satisfy properties (P1), (P2) and (P3) of Theorem \ref{maincombcorvar}. Then the principle
$\mathrm{Pl}_d\big(\expp_\om(\aleph_0)\big)$ holds for every integer $d\geq 1$.
\end{prop}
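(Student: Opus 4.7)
The plan is to mirror the proof of Proposition \ref{first}, replacing $[\cdot]^2$ everywhere by $[\cdot]^d$ and shifting the induction to start at level $d-1$. Fix an integer $d\geq 1$ and a coloring $c:\big[[\expp_\om(\aleph_0)]^d\big]^{<\om}\to\om$. The combinatorial core is the following higher-dimensional analogue of Claim \ref{claimnew}: for every $N\geq d-1$, every coloring $c^*:\prod_{i=d-1}^N[\theta_{i+1}]^d\to\om$ and every $(D_i)_{i=d-1}^N\in\prod_{i=d-1}^N\ddd_i$, there exist $(E_i)_{i=d-1}^N\in\prod_{i=d-1}^N(\ddd_i\rest D_i)$ and $m_0\in\om$ such that $c^*\rest\prod_{i=d-1}^N[\xx_i]^d$ is constantly $m_0$ for every $(\xx_i)\in\prod_{i=d-1}^N E_i$.

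I would prove this by induction on $N\geq d-1$. The base case $N=d-1$ is a direct application of property (P3) of Theorem \ref{maincombcorvar} at level $n=d-1$, which concerns precisely $d$-ary colorings (since $n+1=d$), applied with $\mu=\om$; note $\om<\ka_d$ because $(\ka_n)$ strictly increases with supremum $\expp_\om(\aleph_0)$. For the inductive step $N>d-1$, introduce
\[
\mathcal F=\big\{f:\prod_{i=d-1}^{N-1}[\theta_{i+1}]^d\to\om\big\};
\]
under the growth conditions built into the hypothesis (the abstract counterpart of the recursion defining $\Theta_n$ in Proposition \ref{first}) we have $|\mathcal F|\leq 2^{\theta_N}<\ka_{N+1}$, so (P3) at level $N$ is applicable with color set $\mathcal F$. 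Although (P3) is stated for $(N+1)$-ary colorings, the $d$-ary version is extracted by lifting any $d$-ary coloring on $[\theta_{N+1}]^d$ to an $(N+1)$-ary one depending only on its first $d$ arguments; since every $\xx$ in a member of $\ddd_N$ is infinite, each $d$-subset of such $\xx$ extends to an $(N+1)$-subset, so constancy of the lift on $[\xx]^{N+1}$ transfers back to constancy of the original on $[\xx]^d$. Defining $\bar c:[\theta_{N+1}]^d\to\mathcal F$ by $\bar c(s)(\bar u)=c^*(\bar u^\con s)$ and applying this derived partition yields $E_N\in\ddd_N\rest D_N$ and some $f_0\in\mathcal F$ with $\bar c\rest[\xx]^d\equiv f_0$ for all $\xx\in E_N$; invoking the inductive hypothesis on $f_0$ with data $(D_i)_{i=d-1}^{N-1}$ delivers the remaining $(E_i)_{i=d-1}^{N-1}$ and the color $m_0$.

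Passing from the Higher Claim to $\pl_d(\expp_\om(\aleph_0))$ proceeds exactly as in Proposition \ref{first}: iterating the claim for each $N\geq d-1$ produces a refinement chain $(E_i^{(N)})_N$ within each $\ddd_i$, and the $\sigma$-closedness of each $\ddd_i$ (a consequence of (P2), since every $\lambda_n$ is uncountable) yields $\hat E_i\in\ddd_i$ lying below $E_i^{(N)}$ for all $N\geq i$; any choice of $\xx_i\in\hat E_i$ works. Pairwise disjointness of the $\xx_i$'s is arranged by the standard trimming: because $\xx_i\sset\theta_{i+1}$ and $(\theta_n)$ is strictly increasing, one refines within the ideal to force $\xx_i\sset\theta_{i+1}\setminus\theta_i$, using the $\ka_{i+1}$-completeness in (P1) together with the fineness enjoyed by the specific ideals $\iii_i$ of Theorem \ref{maincombcorvar}. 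Finally, re-indexing $\xx_n':=\xx_{n+d-1}$ for $n\in\om$ yields the sequence demanded by $\pl_d$: for each $m\geq 0$, constancy on $\prod_{n=0}^m[\xx_n']^d$ is the Higher Claim applied at $N=m+d-1\geq d-1$.

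The only non-routine point is the arithmetic verification $|\mathcal F|<\ka_{N+1}$ at each inductive step; this is precisely the role of the carefully interleaved growth rates of the three sequences $(\ka_n)$, $(\lambda_n)$, $(\theta_n)$ in Theorem \ref{maincombcorvar}, and without such a dovetailing the induction collapses already past the base case.
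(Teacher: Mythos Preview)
Your proposal is correct and is essentially the adaptation the paper has in mind; note that the paper explicitly declines to give a proof of Proposition~\ref{firstvar}, saying the arguments are ``straightforward adaptations of our previous arguments (we leave the details to the interested reader).'' Your write-up supplies exactly those details: the induction mirrors Claim~\ref{claimnew}, the base case sits at level $d-1$ where (P3) speaks directly about $d$-ary colorings, and at higher levels you recover the $d$-ary partition from the $(N+1)$-ary one in (P3) via the lifting trick---a clean observation the paper does not spell out. Your closing remark about the cardinal arithmetic $|\mathcal F|\leq 2^{\theta_N}<\kappa_{N+1}$ is well taken: this inequality is not part of the bare hypotheses of Proposition~\ref{firstvar} as stated, but it is guaranteed by the specific interleaving produced in Theorem~\ref{maincombcorvar} (where $\theta_n<\expp_\omega(\kappa_n)$ while $\kappa_{n+1}$ is inaccessible above that in the relevant intermediate model), and the paper evidently intends the proposition to be read in that context.
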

\begin{cor}[\cite{Sh2}]
\label{shelahvar} Suppose that in our universe $V$ there exists a strictly increasing sequence $(\ka_n)$
of strongly compact cardinals with $\ka_0=\aleph_0$. Then, there is a forcing extension of $V$ in
which the principle $\mathrm{Pl}_d\big(\expp_\om(\aleph_0)\big)$ holds for every integer $d\geq 1$.
Moreover, if GCH holds in $V$, then GCH also holds in the forcing extension.
\end{cor}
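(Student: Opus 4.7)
The plan is to obtain this corollary as an immediate combination of Theorem \ref{maincombcorvar} with Proposition \ref{firstvar}, in exactly the same fashion that Corollary \ref{shelah} was derived from Theorem \ref{maincombcor} and Proposition \ref{first}. Specifically, starting from the given strictly increasing sequence $(\ka_n)$ of strongly compact cardinals with $\ka_0=\aleph_0$, I would invoke Theorem \ref{maincombcorvar} to select sequences $(\lambda_n)$ and $(\theta_n)$ inside the intervals $[\ka_n,\mathrm{exp}_\om(\ka_n))$ so that the product iteration $\mathbb P=\bigotimes_{n\in\om}\coll(\la_n,<\ka_{n+1})$ of L\'evy collapses yields, for any $\mathbb P$-generic filter $G$ over $V$, a forcing extension $V[G]$ in which $\sup_n\ka_n=\sup_n\la_n=\sup_n\theta_n=\mathrm{exp}_\om(\aleph_0)$ together with ideals $\iii_n$ on $[\theta_{n+1}]^\om$ and dense subsets $\ddd_n\sset\iii_n^+$ satisfying properties (P1), (P2), (P3) of Theorem \ref{maincombcorvar}.

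Next, I would observe that the three sequences $(\ka_n)$, $(\lambda_n)$, $(\theta_n)$ consist of regular cardinals (the $\ka_n$ are strongly compact hence measurable hence regular, and the $\la_n,\theta_n$ are regular by their construction in the theorem), are strictly increasing, and all converge to $\mathrm{exp}_\om(\aleph_0)$. Together with the ideals $(\iii_n)$ and the dense subsets $(\ddd_n)$, these data verify verbatim the hypotheses of Proposition \ref{firstvar} in $V[G]$. Applying that proposition then yields that $\pl_d\big(\mathrm{exp}_\om(\aleph_0)\big)$ holds in $V[G]$ for every integer $d\geq 1$, which is the main conclusion of the corollary.

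Finally, the preservation of GCH is handled simply by quoting the concluding clause of Theorem \ref{maincombcorvar}: if GCH holds in $V$, then it still holds in $V[G]$. Since no further work is required beyond invoking the two previously stated results, I expect no genuine obstacles at this stage; the substantive difficulties have been absorbed into Theorem \ref{maincombcorvar} (whose higher-dimensional polarized version of Lemma \ref{MainLemma} is where the Shelah-type combinatorics is deployed) and into the inductive unwinding carried out in Proposition \ref{firstvar}. Thus the proof of the corollary reduces to two lines citing these results.
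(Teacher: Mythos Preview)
Your proposal is correct and matches the paper's approach: the paper states that Corollary \ref{shelahvar} is the higher-dimensional analogue of Corollary \ref{shelah} and that its proof is a straightforward adaptation, which amounts exactly to combining Theorem \ref{maincombcorvar} with Proposition \ref{firstvar} as you do. No further argument is given or needed in the paper.
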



\section{Banach space implications}

Let us recall that a sequence $(x_n)$ in a Banach space $E$ is said
to be \textit{$C$-unconditional}, where $C\geq 1$, if for every pair
$F$ and $G$ of non-empty finite subsets of $\om$ with $F\subseteq G$
and every choice $(a_n)_{n\in G}$ of scalars we have
\[ \| \sum_{n\in F} a_n x_n \| \leq C \cdot \|\sum_{n\in G} a_n x_n \|.\]
This main result in this section is the following.
\begin{thm}
\label{mainappli}
Let $\ka$ be a cardinal and assume that property $\mathrm{Pl}_2(\ka)$
holds (see Definition \ref{oriutirt}). Then every Banach space $E$ not
containing $\ell_1$ and of density $\ka$ contains an $1$-unconditional
basic sequence.

In particular, if $E$ is any Banach space of density $\ka$, then for every
$\ee>0$ the space $E$ contains an $(1+\ee)$-unconditional basic sequence.
\end{thm}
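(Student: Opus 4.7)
The plan is to apply $\pl_2(\ka)$ to a coloring of finite tuples of pairs from $[\ka]^2$ that codes approximate norms of rational linear combinations of differences $y_{\{\al,\be\}}:=x_\be-x_\al$, where $(x_\al)_{\al<\ka}$ is a separated normalized sequence coming from the density hypothesis. Rosenthal's $\ell_1$-theorem will then be invoked to force ``unused'' differences to be weakly null, so that weak lower-semicontinuity of the norm delivers $1$-unconditionality.

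First I would, using density of $E$, fix a $\tfrac12$-separated normalized sequence $(x_\al)_{\al<\ka}$ (built inductively by Riesz's lemma, since at each stage $\al<\ka$ the closed span of the previous vectors has density $<\ka$ and is therefore a proper subspace). Then I would enumerate all triples $\tau=(\bar q,F,k)$ with $\bar q\in\Q^{<\om}$, $F\sset\dom(\bar q)$ and $k\in\om$, and for each tuple $\bar p=(p_0,\dots,p_m)\in\prod_{i\le m}[\ka]^2$ define
\[
c(\bar p)\ =\ \text{a code in }\om\text{ for }\Big(\lfloor 2^k\nrm{\textstyle\sum_{i\in F}q_iy_{p_i}}\rfloor\Big)_{\tau:\,|\bar q|\le m+1,\ k\le m}.
\]
Applying $\pl_2(\ka)$ to $c$ produces pairwise disjoint infinite subsets $(\xx_n)$ of $\ka$ on which $c$ is homogeneous on every finite product. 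Letting $k\to\infty$ then shows that $\nrm{\sum_{i\in F}q_iy_{p_i}}$ depends only on $\bar q$ and $F$ as soon as each $p_i\in[\xx_i]^2$.

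Since $E$ does not contain $\ell_1$, Rosenthal's theorem will let me replace each $\xx_n$ by the index set of a weakly Cauchy subsequence of $(x_\ga)_{\ga\in\xx_n}$; the differences $y_{\{\al,\be\}}$ with $\al<\be$ in a tail of $\xx_n$ then become as weakly small as desired. I will fix pairs $p_n\in[\xx_n]^2$ and set $z_n:=y_{p_n}$, so that $\nrm{z_n}\ge\tfrac12$ by separation. To verify $1$-unconditionality I would take $F\sset G=\{0,\dots,m\}$ and rational $(q_i)_{i\in G}$, keep $p_i$ fixed for $i\in F$, and let $p_i^{(\ell)}\in[\xx_i]^2$ for $i\in G\setminus F$ be pairs whose endpoints escape to infinity inside $\xx_i$. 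Then $y_{p_i^{(\ell)}}\to 0$ weakly, so $\sum_{i\in G}q_iy_{p_i^{(\ell)}}\to\sum_{i\in F}q_iz_i$ weakly in $E$. Homogeneity of $c$ pins $\nrm{\sum_{i\in G}q_iy_{p_i^{(\ell)}}}$ to the constant value $\nrm{\sum_{i\in G}q_iz_i}$ independently of $\ell$, and weak lower-semicontinuity of the norm gives
\[
\nrm{\textstyle\sum_{i\in F}q_iz_i}\ \le\ \nrm{\textstyle\sum_{i\in G}q_iz_i}.
\]
Density of $\Q$ in $\R$ extends this to all real scalars, so $(z_n)$ is a $1$-unconditional (hence basic) sequence. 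The ``in particular'' clause then follows at once: if $E$ contains $\ell_1$, its canonical basis is already $1$-unconditional, and otherwise the main statement applies, so in either case for every $\ee>0$ the space $E$ contains an $(1+\ee)$-unconditional basic sequence.

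The hard part will be arranging that the $\om$-valued coloring -- which a priori records only rational approximations to norms -- produces \emph{exact} equality of norms on the homogeneous set rather than mere approximation. This will be handled by building arbitrarily large precision $k$ into the color, pinning the homogeneous value to within $2^{-k}$ for every $k$ and recovering the exact norm in the limit. Once this is done, the weakly-null extraction inside each $\xx_n$ afforded by Rosenthal and the lower-semicontinuity inequality should fall out in a standard way.
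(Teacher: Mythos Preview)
Your proof is correct and takes a genuinely different route from the paper's.

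The paper encodes in its coloring $c_{\mathrm{un}}$ only the \emph{threshold of failure of unconditionality}: $c_{\mathrm{un}}(p_0,\dots,p_m)$ is the least $l$ for which $(y_{p_i})_{i\le m}$ is not $(1+1/l)$-unconditional. After applying $\pl_2(\ka)$ and obtaining constant values $l_m$, the paper rules out $l_m>0$ by appealing to a separately proved \emph{multi-dimensional Schreier unconditionality lemma} (Lemma~\ref{wnullunco}), which itself requires several applications of Ramsey's theorem together with a delicate argument involving norming functionals and pre-compact families. Your coloring instead records the (arbitrarily fine) \emph{approximate norms} of all rational linear combinations of the $y_{p_i}$; homogeneity then pins down the norms exactly, and you derive $1$-unconditionality directly from weak lower-semicontinuity of the norm, with no auxiliary lemma needed. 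Both proofs invoke Rosenthal's $\ell_1$-theorem in the same way, to make the ``unused'' differences weakly null. Your argument is more elementary and self-contained; the paper's route, on the other hand, isolates Lemma~\ref{wnullunco} as a result of independent interest.

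One small remark on the ``in particular'' clause: the sentence ``if $E$ contains $\ell_1$, its canonical basis is already $1$-unconditional'' is slightly loose, since an isomorphic copy of $\ell_1$ inside $E$ need not be $1$-unconditional. What is true (and what the paper invokes, via \cite[Proposition~2.e.3]{LT}) is James' $\ell_1$-distortion theorem: if $E$ contains $\ell_1$, then for every $\ee>0$ it contains a $(1+\ee)$-isomorphic copy, whose basis is $(1+\ee)$-unconditional. Your stated conclusion is therefore correct as written.
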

Combining Corollary \ref{shelah} with Theorem \ref{mainappli}, we
get the following corollaries.
\begin{cor}
\label{previouscorollary} It is consistent relative the existence of an infinite sequence of strongly
compact cardinals that for every $\ee>0$ and every Banach space $E$ of density at least
$\expp_\om(\aleph_0)$, the space $E$ contains an $(1+\ee)$-unconditional basic sequence. Moreover,
this statement is consistent with GCH.
\end{cor}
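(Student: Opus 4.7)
The plan is to obtain the corollary as an immediate combination of two results already established above: the set-theoretic Corollary~\ref{shelah}, which starts from an $\om$-sequence of strongly compact cardinals and produces a forcing extension satisfying the polarized partition principle $\mathrm{Pl}_2\big(\expp_\om(\aleph_0)\big)$ (and which preserves GCH when it held in the ground model), and the Banach-space Theorem~\ref{mainappli}, which turns such a partition principle at a cardinal $\ka$ into the existence of $(1+\vep)$-unconditional basic sequences in every Banach space of density $\ka$.

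I would begin in a ground model $V$ containing a strictly increasing sequence $(\ka_n)_{n\in\om}$ of strongly compact cardinals with $\ka_0=\aleph_0$; for the \emph{moreover} clause I additionally assume $V\models\mathrm{GCH}$. Corollary~\ref{shelah} then hands me a generic extension $V[G]$ in which $\mathrm{Pl}_2\big(\expp_\om(\aleph_0)\big)$ holds, and in which GCH continues to hold whenever it did in $V$. Working inside $V[G]$, fix $\vep>0$ and let $E$ be an arbitrary Banach space of density at least $\expp_\om(\aleph_0)$. Note that $\mathrm{Pl}_2$ is upward monotone in its cardinal parameter: if $\la\geq\ka$ and $\mathrm{Pl}_2(\ka)$ holds, then given any coloring $c:[[\la]^2]^{<\om}\to\om$ we may apply $\mathrm{Pl}_2(\ka)$ to the restriction $c\rest [[\ka]^2]^{<\om}$ and obtain infinite pairwise disjoint subsets of $\ka\sset\la$ that witness the partition property for $c$ itself. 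Hence $\mathrm{Pl}_2\big(\mathrm{dens}(E)\big)$ also holds, and the ``in particular'' clause of Theorem~\ref{mainappli}, applied with $\ka=\mathrm{dens}(E)$, yields an $(1+\vep)$-unconditional basic sequence in $E$.

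At the level of the corollary itself there is essentially no obstacle: the verification reduces to bookkeeping plus the short monotonicity remark above (one could alternatively bypass this by a subspace reduction, taking the closed linear span inside $E$ of a carefully chosen subset of cardinality $\expp_\om(\aleph_0)$ and applying Theorem~\ref{mainappli} to that subspace). All the genuine mathematical content lives upstream, in the two ingredients being combined: on the set-theoretic side, in the construction of the ideals $\iii_n$ of Theorem~\ref{maincombcor} via iterated L\'evy collapses of the strongly compact cardinals together with the amalgamation carried out in Proposition~\ref{first}; and on the Banach-space side, in the Ramsey-type extraction inside Theorem~\ref{mainappli} that uses $\mathrm{Pl}_2(\ka)$ to locate a $1$-unconditional basic sequence in any $\ell_1$-free Banach space of density $\ka$, with the passage from $1$- to $(1+\vep)$-unconditional in the general case handled by separating out the $\ell_1$ alternative (where $\ell_1$ itself already carries $1$-unconditional basic sequences).
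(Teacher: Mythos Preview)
Your proposal is correct and follows exactly the paper's approach: the paper's own proof is the one-line ``Follows immediately by Corollary~\ref{shelah} and Theorem~\ref{mainappli}'', and you have simply unpacked this combination with the extra care of handling the ``density at least $\expp_\om(\aleph_0)$'' clause via the (trivial) upward monotonicity of $\mathrm{Pl}_2$.
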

\begin{proof}
Follows immediately by Corollary \ref{shelah} and Theorem \ref{mainappli}.
\end{proof}
\begin{cor}
It is consistent relative to the existence of an infinite sequence of strongly compact cardinals that
every Banach space of density at least $\expp_\om(\aleph_0)$ has a separable quotient with an
unconditional basis. Moreover, this statement is consistent with GCH.
\end{cor}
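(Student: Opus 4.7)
The plan is to derive this corollary by directly concatenating Corollary \ref{previouscorollary} with the classical theorem of Hagler and Johnson \cite{HJ} that was alluded to in the introduction. First, I would pass to the forcing extension supplied by Corollary \ref{previouscorollary}; in that extension every Banach space of density at least $\expp_\om(\aleph_0)$ contains, for every $\ee>0$, an infinite $(1+\ee)$-unconditional basic sequence, and the whole statement is jointly consistent with GCH. Both consistency and GCH-preservation are already packaged by Corollary \ref{previouscorollary}, so the only work left is to upgrade the conclusion from "unconditional basic sequence" to "separable quotient with an unconditional basis".

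Second, given an arbitrary Banach space $E$ of density at least $\expp_\om(\aleph_0)$ in the extension, I would fix an infinite unconditional basic sequence $(x_n)$ in $E$ produced by Corollary \ref{previouscorollary}. I would then invoke the Hagler--Johnson theorem, which guarantees that any Banach space containing an infinite unconditional basic sequence admits a separable quotient whose natural basis is unconditional. Applying this to $E$ and $(x_n)$ yields the desired separable quotient with an unconditional basis. The GCH clause is inherited verbatim from the previous corollary, so no additional argument is needed on that front.

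I do not anticipate any substantive obstacle beyond bookkeeping: by this point in the paper the combinatorial, forcing-theoretic and Banach-space ingredients have all been assembled, and the proof is essentially a one-line combination of Corollary \ref{previouscorollary} and \cite{HJ}. The only point that deserves mild care is invoking the Hagler--Johnson result in the form that truly delivers an unconditional (rather than merely Schauder) basis on the quotient, and verifying that possessing an infinite unconditional basic sequence is the only hypothesis required of $E$; once that is checked, the proof is immediate.
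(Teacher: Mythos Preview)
There is a genuine gap in your proposal: you have misstated the Hagler--Johnson theorem. What Hagler and Johnson proved (and what the paper cites, see also \cite[Proposition 16]{ADK}) is that if the \emph{dual} $E^*$ contains an unconditional basic sequence, then $E$ admits a separable quotient with an unconditional basis. The hypothesis lives in $E^*$, not in $E$; it is not the case that ``possessing an infinite unconditional basic sequence is the only hypothesis required of $E$''. Applying Corollary~\ref{previouscorollary} to $E$ itself gives you an unconditional basic sequence in $E$, which is not the input Hagler--Johnson needs. You even flagged this verification as the one point deserving care, and it is exactly the point that fails.

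The fix is the paper's own argument and is just as short: observe that the density of $E^*$ is always at least the density of $E$, so in the forcing extension $E^*$ also has density at least $\expp_\om(\aleph_0)$. Apply Corollary~\ref{previouscorollary} to $E^*$ to obtain an unconditional basic sequence in $E^*$, and then invoke Hagler--Johnson in its correct form to conclude that $E$ has a separable quotient with an unconditional basis. The GCH clause carries over exactly as you said.
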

\begin{proof}
A well-known consequence of a result due to J. N. Hagler and W. B.
Johnson \cite{HJ} asserts that if $E$ is a Banach space such that
$E^*$ has an unconditional basic sequence, then $E$ has a separable
quotient with an unconditional basis (see also \cite[Proposition
16]{ADK}). Noticing that the density of the dual $E^*$ of a Banach
space $E$ is at least as big as the density of $E$, the result
follows by Corollary \ref{previouscorollary}.
\end{proof}
The section is organized as follows. In \S 4.1 we give the proof of
Theorem \ref{mainappli}, while in \S 4.2 we present its ``sequential" version.
Two proofs of this version are given, each of which is based on a different
combinatorial principle.


\subsection{Proof of Theorem \ref{mainappli}}

We start with the following lemma, which is essentially a multi-dimensional version of Odell's
Schreier unconditionality theorem \cite{Od}.
\begin{lem}\label{wnullunco}
Let $E$ be a Banach space, $m\in\om$ with $m\geq 1$ and $\ee>0$.
For every $i\in\{0,\dots,m\}$ let $(x_n^i)$ be a normalized weakly null
sequence in the space $E$. Then, there exists an infinite subset $L$ of $\om$
such that for every $\{n_0<\dots <n_m\}\sset L$ the sequence
$(x_{n_i}^i)_{i=0}^m$ is $(1+\ee)$-unconditional.
\end{lem}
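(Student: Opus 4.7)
Following Odell's approach for a single weakly null sequence, I extract the subsequence $L$ by a diagonal argument that produces approximate biorthogonality between the vectors $x_{n_{k_i}}^i$ and carefully chosen norming functionals. The feature distinguishing our setting from Odell's---and allowing the constant $1+\ee$ rather than Odell's $2+\ee$---is that the tuple length is the fixed number $m+1$, which permits driving the biorthogonality error arbitrarily small relative to $m$.

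Without loss of generality, replace $E$ by the separable subspace generated by all the $x_n^i$, so that $(B_{E^*},w^*)$ is metrizable. For each $n$ and $i$, fix a norming functional $f_n^i\in S_{E^*}$ with $f_n^i(x_n^i)=1$. I then construct $L=\{n_0<n_1<\dots\}$ inductively. Having chosen $n_0<\dots<n_{k-1}$, the weak nullity of each $(x_n^j)$ implies that every functional in the finite collection $\{f_{n_l}^i:l<k,\ i\leq m\}$ tends to zero along $(x_n^j)$, so one may choose $n_k>n_{k-1}$ large enough that
\[
|f_{n_l}^i(x_{n_k}^j)|<\delta_k\quad\text{for all }l<k\text{ and all }i,j\leq m,
\]
where $(\delta_k)$ is a rapidly decreasing positive sequence whose sum is controlled in terms of $\ee$ and $m$.

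Given any $(m+1)$-tuple $\{n_{k_0}<\dots<n_{k_m}\}\sset L$ and writing $y_i:=x_{n_{k_i}}^i$ and $g_i:=f_{n_{k_i}}^i$, the construction yields the \emph{one-sided biorthogonality} $|g_\alpha(y_\beta)|<\delta_{k_\alpha}$ for $\alpha<\beta$. To deduce $(1+\ee)$-unconditionality of $(y_0,\dots,y_m)$, fix $F\sset\{0,\dots,m\}$ and scalars $(a_i)$, take $f\in S_{E^*}$ norming $\sum_{i\in F}a_iy_i$, and iteratively modify $f$ by subtracting $f(y_j)g_j$ for each $j\notin F$, tracking the cascade of errors via the biorthogonality bounds. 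The resulting functional $h$ satisfies $h(y_i)\approx f(y_i)$ for $i\in F$ and $h(y_j)\approx 0$ for $j\notin F$, and applying $h$ to $\sum_{i=0}^m a_iy_i$ delivers the required inequality $\|\sum_{i\in F}a_iy_i\|\leq(1+\ee)\|\sum_{i=0}^m a_iy_i\|$.

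The main obstacle lies in controlling $\|h\|$ in the last step: the naive estimate gives only $\|h\|\leq 1+m$, which would produce an unconditional constant of $1+m$ rather than $1+\ee$. Overcoming this requires (i)~strengthening the extraction by also passing the norming functionals $(f_n^i)_n$ to weak*-convergent subsequences (using the metrizability of $(B_{E^*},w^*)$) and working with the perturbations $f_n^i-f_\infty^i$, thereby upgrading the one-sided biorthogonality to two-sided $|g_i(y_j)|<\delta$ for all $i\neq j$ once the tuple lives in a sufficiently sparse tail; and (ii)~a refined Hahn--Banach argument exploiting that weak nullity forces $\mathrm{dist}(y_j,\mathrm{span}(y_i:i\in F))$ close to $1$, so that each modification step contributes only a multiplicative factor $1+O(\delta)$ to $\|h\|$ rather than an additive $1$. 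With $(\delta_k)$ chosen sufficiently small compared with $\ee/m$, the iteration yields $\|h\|\leq 1+\ee$ and hence the claimed unconditional constant.
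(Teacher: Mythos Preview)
Your outline has a real gap at step~(ii), which is exactly the crux of the lemma. You correctly identify that the naive modification $h=f-\sum_{j\notin F}f(y_j)g_j$ only gives $\|h\|\le 1+O(m)$, but your proposed remedy does not work. First, the assertion that ``weak nullity forces $\mathrm{dist}(y_j,\mathrm{span}(y_i:i\in F))$ close to $1$'' is false: the vectors $y_i=x^{i}_{n_{k_i}}$ come from \emph{different} sequences at \emph{different} indices, and weak nullity of each $(x^i_n)_n$ says nothing about the mutual geometry of a single extracted tuple. Two-sided $\delta$-biorthogonality with functionals of norm at most $2$ (which is all your construction yields) is compatible with tuples whose suppression projections have norm bounded below by $2$, so no amount of shrinking $\delta$ rescues the estimate. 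Second, even if the distance claim held, you have not explained why the Hahn--Banach corrections compound multiplicatively rather than additively; in fact they do not, for the same reason Odell's theorem gives $2+\ee$ rather than $1+\ee$.

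The paper's proof avoids this obstacle by a different mechanism: rather than modifying the functional, it modifies the \emph{vectors}. After an application of Ramsey's theorem to arrange that all $(m+1)$-tuples from $L$ are mutually $(1+\delta)$-equivalent as basic sequences, one fixes the norming functional $x_0^*$ for $\sum_{i\in F}a_i x^i_{n_i}$ and, for each $i\notin F$, replaces $n_i$ by some $k_i$ (chosen from a pre-selected finite block $F_i\subset L$) with $|x_0^*(x^i_{k_i})|<\delta/(8(m+1))$; weak nullity guarantees such $k_i$ exist. Then $x_0^*$ witnesses that the mixed tuple has large norm, and the stabilized equivalence transfers this back to the original tuple. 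The missing idea in your approach is this Ramsey stabilization step, which is what converts an a~priori additive loss into a multiplicative $(1+\delta)$.
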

\begin{proof}
The first step towards the proof of the lemma is included in the following claim. It shows that, by
passing to an infinite subset of $\om$, we may assume that for every $\{n_0<\dots<n_m\}\in [\nn]^{m+1}$
the finite sequence $(x^i_{n_i})_{i=0}^m$ is a particularly well behaved Schauder basic sequence.
\begin{claim}\label{o4ut44iui4}
For every $\vep>0$ there exists an infinite subset $M$ of $\om$ such that for every
$\{n_0<\dots <n_m\}\sset M$ the sequence $(x_{n_i}^i)_{i=0}^m$ is an $(1+\vep)$-Schauder
basic sequence.
\end{claim}
\begin{proof}[Proof of Claim \ref{o4ut44iui4}]
We define a coloring $\mathcal{B}:[\N]^{m+1}\to 2$ as follows.
Let $s=\{n_0<\dots<n_m\}\in [\nn]^{m+1}$ arbitrary. If $(x_{n_i}^i)_{i=0}^m$ is an $(1+\vep)$-Schauder
basic sequence, then we set $\mathcal{B}(s)=0$; otherwise we set $\mathcal{B}(s)=1$. By Ramsey's
theorem, there exist an infinite subset $M$ of $\om$ and $c\in\{0,1\}$ such that
$\mathcal{B}\upharpoonright [M]^{m+1}$ is constantly equal to $c$. Using Mazur's classical
procedure for selecting Schauder basic sequences (see, for instance, \cite[Lemma 1.a.6]{LT}),
we find $t=\{k_0<\dots<k_m\}\in [M]^{m+1}$ such that the sequence $(x_{k_i}^i)_{i=0}^m$ is basic
with basis constant $(1+\vep)$. Therefore, $\mathcal{B}(t)=0$, and by homogeneity,
$\mathcal{B}\upharpoonright [M]^{m+1}=0$. The claim is proved.
\end{proof}
Applying Claim \ref{o4ut44iui4} for $\vep=1$, we get an infinite subset $M$ of $\om$ as
described above. Observe that for every $\{n_0<\dots<n_m\}\in [M]^{m+1}$ and every choice
$(a_i)_{i=0}^m$ of scalars we have
\begin{equation}
\label{kjpoeijreoi}
\nrm{\sum_{i=0}^m a_i x_{n_i}^i}\ge \frac{1}{4} \max\set{|a_i|}{i=0,\dots,m}.
\end{equation}
The desired subset $L$ of $\om$ will be an infinite subset of $M$ obtained after another
application of Ramsey's theorem. Specifically, consider the coloring $\mathcal{U}:[M]^{m+1}\to 2$
defined as follows. Let $s=\{n_0<\dots<n_m\}\in [M]^{m+1}$ and assume that the sequence
$(x_{n_i}^i)_{i=0}^m$ is $(1+\vep)$-unconditional. In such a case, we set $\mathcal{U}(s)=0$;
otherwise we set $\mathcal{U}(s)=1$. Let $L$ be an infinite subset of $M$ be such $\mathcal{U}$
is constant on $[L]^{m+1}$. It is enough to find some $s\in [L]^{m+1}$ such that
$\mathcal{U}(s)=0$.

To this end, fix $\de>0$ such that $(1+\de)\cdot(1-\de)^{-1}\le (1+\vep)$. Notice that there exists a
finite family $\mathcal{D}$ of normalized Schauder basic sequences of length $m+1$ such that any
normalized Schauder basic sequence $(y_i)_{i=0}^m$, in some Banach space $Y$, is
$\sqrt{1+\de}$-equivalent to some sequence in the family $\mathcal{D}$. Hence, by a further
application of Ramsey's theorem and by passing to an infinite subset of $L$ if necessary,
we may assume that
\begin{enumerate}
\item[($\ast$)] for every $\{n_0<\dots<n_m\}, \{k_0<\dots<k_m\}\in [L]^{m+1}$ the
sequences $(x_{n_i}^i)_{i=0}^m$ and $(x_{k_i}^i)_{i=0}^m$ are $(1+\de)$-equivalent.
\end{enumerate}
Now, for every $i\in\{0,\dots,m\}$ and every $\rho>0$ let
\[ \mathcal{K}_i(\rho)=\big\{ \{n\in\om\; :\; |x^*(x_n^i)|\ge\rho\}\; :\; x^*\in B_{E^*}\big\}.\]
Every sequence $(x^i_n)$ is weakly null, and so, each $\mathcal{K}_i(\rho)$ is a
pre-compact\footnote[1]{Recall that a family $\mathcal{F}$ of finite subsets of $\om$ is said to be
pre-compact if, identifying $\mathcal{F}$ with a subset of the Cantor set $2^\om$, the closure
$\overline{\mathcal{F}}$ of $\mathcal{F}$ in $2^\om$ consists only of finite sets.}
family of finite subsets of $\om$. Hence, we may select a sequence $(F_i)_{i=0}^m$
of finite subsets of $L$ such that
\begin{enumerate}
\item[(a)] $\max(F_i)<\min(F_{i+1})$ for every $i\in\{0,\dots,m-1\}$, and
\item[(b)] $F_i\notin \mathcal{K}_i(\de\cdot 8^{-1}\cdot (m+1)^{-1})$ for every $i\in\{0,\dots,m\}$.
\end{enumerate}
We set $n_i=\min(F_i)$ for all $i\in\{0,\dots,m\}$. Property (a) above implies that
$n_0<\dots<n_m$. We claim that the sequence $(x_{n_i}^i)_{i=0}^m$ is $(1+\vep)$-unconditional.
Indeed, let $F\subseteq \{0,\dots,m\}$ and $(a_i)_{i=0}^m$ be a choice of scalars. We want to prove that
\[\nrm{\sum_{i\in F}a_i x_{n_i}^i}\le (1+\vep)\nrm{\sum_{i=0}^m a_ix_{n_i}^i}.\]
Clearly we may assume that $\nrm{\sum_{i\in F}a_i x_{n_i}^i}=1$. If
$\nrm{\sum_{i\notin F} a_i x_{n_i}^i}\ge 2$, then
\[ \nrm{\sum_{i=0}^m a_i x_{n_i}^i}\ge \nrm{\sum_{i\notin F}a_i x_{n_i}^i}-
\nrm{\sum_{i\in F}a_i x_{n_i}^i}\ge 1=\nrm{\sum_{i\in F}a_i x_{n_i}^i}.\]
So, suppose that $\nrm{\sum_{i\notin F} a_i x_{n_i}^i}\le 2$. By
\eqref{kjpoeijreoi}, we see that
\begin{equation}
\label{wirueirggf} \max\set{|a_i|}{i\notin F}\le 8.
\end{equation}
We select $x_0^*\in S_{E^*}$ such that $x_0^*\big(\sum_{i\in F}a_i x_{n_i}^i\big)=\nrm{\sum_{i\in F}a_i
x_{n_i}^i}$. We define a sequence $(k_i)_{i=0}^m$ in $L$ as follows. If $i\notin F$, then
let $k_i$ be any member of $F_i$ satisfying $|x_0^*(x_{k_i}^i)|<\de\cdot 8^{-1}\cdot(m+1)^{-1}$
(such a selection is possible by (b) above); if $i\in F$, then we set $k_i=n_i$. By (a),
we have $k_0<\dots <k_m$. Moreover,
\begin{eqnarray*}
\nrm{\sum_{i=0}^m a_i x_{k_i}^i} & =&  x_0^*\big(\sum_{i=0}^m a_i x_{k_i}^i\big) =
x_0^*\big(\sum_{i\in F} a_i x_{k_i}^i\big) +x_0^*\big(\sum_{i\notin F} a_i x_{k_i}^i\big) \\
& \geq & x_0^*\big(\sum_{i\in F} a_i x_{k_i}^i\big) - \sum_{i\notin F} |a_i|\cdot |x_0^*(x_{k_i}^i)|
\geq  1 -\de.
\end{eqnarray*}
Invoking ($\ast$), we conclude that
\[ \nrm{\sum_{i=0}^m a_i x_{n_i}^i}\ge \frac{1}{1+\de}\nrm{\sum_{i=0}^m a_i x_{k_i}^i}\ge
\frac{1-\de}{1+\de} \ge \frac{1}{1+\vep}\nrm{\sum_{i\in F}a_i x_{n_i}^i}. \]
The proof is completed.
\end{proof}
We are ready to proceed to the proof of Theorem \ref{mainappli}.
\begin{proof}[Proof of Theorem \ref{mainappli}]
Let $\ka$ be a cardinal such that $\mathrm{Pl}_2(\ka)$ holds. By a
classical result of R. C. James (see \cite[Proposition 2.e.3]{LT}),
it is enough to show that if $E$ is a Banach space of density $\ka$
not containing an isomorphic copy of $\ell_1$, then $E$ has an
$1$-unconditional basic sequence. So, let $E$ be one. By Rosenthal's
$\ell_1$ theorem \cite{Ro} and our assumptions on the space $E$, we see
that every bounded sequence in $E$ has a weakly Cauchy subsequence.
Let $(x_\al:\al<\ka)$ be a normalized sequence such that
$\nrm{x_\al-x_\be}\ge 1$ for every $\al<\be<\ka$. We define a
coloring $c_{\mathrm{un}}:\big[\,[\ka]^2\,\big]^{<\om}\to\om$ as
follows. Let $s=(\{\al_0<\be_0\},\dots,\{\al_m<\be_m\})\in
\big[\,[\ka]^2\,\big]^{<\om}$ arbitrary. Assume that there exists
$l\in\omega$ with $l>0$ and such that the sequence
$(x_{\be_i}-x_{\al_i})_{i=0}^m$ is \textit{not}
$(1+1/l)$-unconditional. In such a case, setting $l_s$ to be the
least $l\in\om$ with the above property, we define
$c_{\mathrm{un}}(s)=l_s$. If such an $l$ does not exist, then we set
$c_{\mathrm{un}}(s)=0$. By $\mathrm{Pl}_2(\ka)$, there exist a
sequence $(\xx_i)$ of infinite subsets of $\kappa$ and a sequence
$(l_m)$ in $\om$ such that for every $m\in\om$ the restriction
$c_{\mathrm{un}}\rest\prod_{i=0}^m[\xx_i]^2$ of the coloring
$c_{\mathrm{un}}$ on the product $\prod_{i=0}^m [\xx_i]^2$ is
constant with value $l_m$.
\begin{claim}
\label{newnewclaim} For every $m\in\om$ we have $l_m=0$.
\end{claim}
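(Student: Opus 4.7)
The plan is to argue by contradiction: suppose $l_m \ge 1$ for some fixed $m$. The idea is to produce, inside each of the sets $\xx_0,\dots,\xx_m$, a difference $x_{\be_i}-x_{\al_i}$ (with $\al_i<\be_i$ in $\xx_i$) so that the resulting length-$(m{+}1)$ tuple of differences is $(1+1/l_m)$-unconditional, contradicting the definition of $c_{\mathrm{un}}$ on the product $\prod_{i=0}^m[\xx_i]^2$.

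The key observation is that the hypotheses on $E$ provide, for each $i$, a normalized weakly null sequence built from differences of pairs in $[\xx_i]^2$. Concretely, since $E\not\supseteq\ell_1$, Rosenthal's $\ell_1$ theorem lets me select, inside each (infinite) $\xx_i$, a countable sequence $(\ga^i_n)_n$ such that $(x_{\ga^i_n})_n$ is weakly Cauchy. Setting $y^i_n:=x_{\ga^i_{2n+1}}-x_{\ga^i_{2n}}$, the separation hypothesis gives $1\le\nrm{y^i_n}\le 2$, and weak Cauchyness gives $y^i_n\to 0$ weakly; hence $\tilde y^i_n:=y^i_n/\nrm{y^i_n}$ is a normalized weakly null sequence in $E$, and every $y^i_n$ is of the required form $x_{\be}-x_{\al}$ with $\{\al,\be\}\in[\xx_i]^2$.

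Now I invoke Lemma \ref{wnullunco} on the $m+1$ normalized weakly null sequences $(\tilde y^i_n)_n$ with $\ee$ chosen small enough that $1+\ee<1+1/l_m$ (e.g.\ $\ee=1/(l_m+1)$). This furnishes an infinite $L\sset\om$ so that for every $\{n_0<\dots<n_m\}\sset L$ the tuple $(\tilde y^i_{n_i})_{i=0}^m$ is $(1+\ee)$-unconditional. Since $C$-unconditionality is invariant under nonzero rescaling of the individual vectors, the tuple $(y^i_{n_i})_{i=0}^m$ is $(1+\ee)$-unconditional, hence in particular $(1+1/l_m)$-unconditional.

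Pick any such $n_0<\dots<n_m$ in $L$, and set $\{\al_i,\be_i\}:=\{\ga^i_{2n_i},\ga^i_{2n_i+1}\}\in[\xx_i]^2$. Then $s:=\big(\{\al_0,\be_0\},\dots,\{\al_m,\be_m\}\big)\in\prod_{i=0}^m[\xx_i]^2$, and the sequence $(x_{\be_i}-x_{\al_i})_{i=0}^m$ is $(1+1/l_m)$-unconditional. By the definition of $c_{\mathrm{un}}$, this forces $c_{\mathrm{un}}(s)\ne l_m$, contradicting the fact that $c_{\mathrm{un}}\rest\prod_{i=0}^m[\xx_i]^2\equiv l_m$. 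The main delicate point is merely bookkeeping: ensuring the Rosenthal-extracted differences are both normalized-weakly-null (so that Lemma \ref{wnullunco} applies) and of the exact shape $x_\be-x_\al$ with $\{\al,\be\}\in[\xx_i]^2$ (so that the coloring sees them), which is achieved by taking consecutive differences along the weakly Cauchy subsequence inside each $\xx_i$.
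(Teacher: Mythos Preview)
Your proof is correct and follows essentially the same approach as the paper: argue by contradiction, use Rosenthal's theorem to extract weakly Cauchy subsequences inside each $\xx_i$, form the normalized weakly null sequences of consecutive differences, and invoke Lemma~\ref{wnullunco} to contradict the constant value $l_m>0$ of the coloring. The only point you leave implicit (and which the paper notes explicitly) is that the assumption $l_m>0$ forces $m\ge 1$, so that Lemma~\ref{wnullunco} is applicable; this follows immediately since any single nonzero vector is $1$-unconditional.
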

Grating the claim, the proof of the theorem is completed.
Indeed, observe that for every infinite sequence of
pairs $\big(\{\al_i<\be_i\}\big)\in \prod_{i\in\om} [\xx_i]^2$
the sequence $(x_{\be_i}-x_{\al_i})$ is a semi-normalized
$1$-unconditional basic sequence in the Banach space $E$.

It only remains to prove Claim \ref{newnewclaim}. To this
end we argue by contradiction. So, assume that there exists
$m\in\om$ such that $l_m>0$. Our definition of the coloring
$c_{\mathrm{un}}$ implies that $m\geq 1$. For every
$i\in\{0,\dots,m\}$ we may select an infinite subset
$\{\al^i_0<\al^i_1<\cdots\}$ of $\xx_i$ such that the sequence
$(x_{\al_i})$ is weakly Cauchy. We set
\[ y^i_n=\frac{x_{\al^i_{2n}}-x_{\al^i_{2n+1}}}{\|x_{\al^i_{2n}}-x_{\al^i_{2n+1}}\|} \]
for every $i\in\{0,\dots,m\}$ and every $n\in\om$. Then each $(y^i_n)$
is a normalized weakly null sequence in $E$. Moreover, for every
$\{n_0<\dots<n_m\}\subseteq [\nn]^{m+1}$ the sequence
$(y^i_{n_i})_{i=0}^m$ is not $(1+1/l_{m})$-unconditional. This
clearly contradicts Lemma \ref{wnullunco}. The proof is completed.
\end{proof}


\subsection{Unconditional subsequences of weakly null sequences}

This subsection is devoted to the proof of the following ``sequential" version of
Theorem \ref{mainappli}.
\begin{thm}
\label{thmnewnewnew} Let $\ka$ be a cardinal and assume that property $\mathrm{Pl}_1(\ka)$ holds
(see Definition \ref{oriutirt}).
Then $\mathfrak{nc}_{\mathrm{seq}}\leq \ka$. In fact, every
normalized weakly null sequence $(x_\al:\al<\ka)$ has an
$1$-unconditional subsequence.
\end{thm}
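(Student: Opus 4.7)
The plan is to mimic the proof of Theorem \ref{mainappli}, with two simplifications: since the sequence $(x_\al:\al<\ka)$ is already weakly null, there is no need to pass to normalized differences $x_\be-x_\al$, and consequently singletons replace pairs, allowing the use of $\mm{Pl}_1(\ka)$ in place of $\mm{Pl}_2(\ka)$.

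First I would introduce the coloring $c_{\mm{un}}:\big[\,[\ka]^1\,\big]^{<\om}\to\om$ that sends a tuple $s=(\{\al_0\},\dots,\{\al_m\})$ of pairwise distinct singletons to the least $l\in\om$ with $l\geq 1$ such that the finite sequence $(x_{\al_i})_{i=0}^m$ fails to be $(1+1/l)$-unconditional, and to $0$ if no such $l$ exists. By $\mm{Pl}_1(\ka)$, there exist pairwise disjoint infinite subsets $(\xx_n)$ of $\ka$ and a sequence $(l_m)$ in $\om$ such that $c_{\mm{un}}\rest\prod_{n=0}^m[\xx_n]^1$ is constantly equal to $l_m$ for every $m\in\om$.

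The heart of the argument is to show that $l_m=0$ for all $m$, which I would establish by contradiction in close analogy with Claim \ref{newnewclaim}. Assuming $l_m>0$ forces $m\geq 1$. For each $i\in\{0,\dots,m\}$ enumerate an infinite subset of $\xx_i$ as $(\al^i_n)_{n<\om}$; since $(x_\al:\al<\ka)$ is weakly null, each $(x_{\al^i_n})_{n<\om}$ is a normalized weakly null sequence. Applying Lemma \ref{wnullunco} with $\vep=1/l_m$ produces an infinite $L\sset\om$ such that for every $\{n_0<\dots<n_m\}\sset L$ the sequence $(x_{\al^i_{n_i}})_{i=0}^m$ is $(1+1/l_m)$-unconditional. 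But the tuple $(\{\al^0_{n_0}\},\dots,\{\al^m_{n_m}\})$ lies in $\prod_{i=0}^m[\xx_i]^1$, so it has $c_{\mm{un}}$-color $l_m>0$, which by the definition of the coloring says that the sequence must \emph{fail} to be $(1+1/l_m)$-unconditional, a contradiction.

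To conclude, pick any $\al_n\in\xx_n$ for each $n\in\om$. Every finite initial segment $(x_{\al_0},\dots,x_{\al_m})$ is $(1+1/l)$-unconditional for all $l\geq 1$, hence $1$-unconditional, making $(x_{\al_n})_{n<\om}$ a $1$-unconditional basic sequence. Selecting an infinite strictly increasing subsequence from the countable set $\{\al_n:n<\om\}$ then yields a genuine subsequence of $(x_\al:\al<\ka)$ that remains $1$-unconditional, since the property is preserved by permutation and by passage to subsequences. The only nontrivial ingredient beyond the coloring argument is Lemma \ref{wnullunco}, which is already in place, so I do not anticipate any further obstacles.
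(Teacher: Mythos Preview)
Your proposal is correct and follows essentially the same approach as the paper: define the coloring $c_{\mathrm{un}}$ on $[\ka]^{<\om}$ (equivalently $\big[[\ka]^1\big]^{<\om}$) that records the least $l$ witnessing a failure of $(1+1/l)$-unconditionality, apply $\mathrm{Pl}_1(\ka)$, and use Lemma \ref{wnullunco} to force all the constant values $l_m$ to be $0$. The paper's proof is in fact just a two-sentence sketch (``Using $\mathrm{Pl}_1(\ka)$ and Lemma \ref{wnullunco}, the result follows''), and your write-up simply unpacks this in the expected way.
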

\begin{proof}
The proof is very similar to the one of Theorem \ref{mainappli}.
Indeed, consider the coloring $c_{\mathrm{un}}:[\ka]^{<\om}\to\om$
defined as follows. Let $s=(\al_0<\dots<\al_m)\in [\ka]^{<\om}$.
Assume that there exists $l\in\om$ with $l>0$ such that the sequence
$(x_{\al_i})_{i=0}^m$ is \textit{not} $(1+1/l)$-unconditional. In
such a case, let $c_{\mathrm{un}}(s)$ be the least $l$ with this
property. Otherwise, we set $c_{\mathrm{un}}(s)=0$. Using
$\mathrm{Pl}_1(\ka)$ and Lemma \ref{wnullunco}, the result follows.
\end{proof}
\begin{cor}
\label{cor123456} It is  consistent relative to the existence of a single measurable cardinal
that every normalized weakly null sequence $(x_\al:\al<\aleph_\om)$ has an $1$-unconditional
subsequence. Moreover, this statement is consistent with GCH.
\end{cor}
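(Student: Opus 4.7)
The plan is to obtain the corollary as a direct combination of the two immediately preceding results, namely Theorem \ref{thmnewnew} (the consistency of $\mathrm{Pl}_1(\aleph_\omega)$ together with GCH from a single measurable cardinal, due to Di Prisco--Todorcevic \cite{DT}) and Theorem \ref{thmnewnewnew} (the Banach-theoretic reduction showing that $\mathrm{Pl}_1(\kappa)$ forces $\mathfrak{nc}_{\mathrm{seq}}\leq\kappa$ with sharp constant $1$).

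Concretely, I would start in a ground model $V$ containing a measurable cardinal and invoke Theorem \ref{thmnewnew} to pass to a generic extension $V[G]$ in which both GCH and $\mathrm{Pl}_1(\aleph_\omega)$ are true. Working inside $V[G]$, I would then apply Theorem \ref{thmnewnewnew} with $\ka=\aleph_\omega$: for every normalized weakly null sequence $(x_\al:\al<\aleph_\om)$ in an arbitrary Banach space $E\in V[G]$, the coloring $c_{\mathrm{un}}:[\aleph_\omega]^{<\om}\to\om$ defined in the proof of Theorem \ref{thmnewnewnew} (recording the least obstruction $l$ to $(1+1/l)$-unconditionality) admits, by $\mathrm{Pl}_1(\aleph_\omega)$, an infinite homogeneous set $\xx\sset\aleph_\omega$ on which $c_{\mathrm{un}}$ is constant on each $[\xx]^m$. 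Lemma \ref{wnullunco} (the multi-dimensional Odell Schreier unconditionality lemma, applied to the diagonal weakly null sequence along $\xx$) forces the constant value to be $0$, yielding an $1$-unconditional subsequence. The GCH clause is free, since GCH holds in $V[G]$ by the moreover clause of Theorem \ref{thmnewnew} and no further forcing is performed.

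There is essentially no obstacle at the level of this corollary; both ingredients have been set up precisely for this application. The only point deserving explicit mention is that Theorem \ref{thmnewnewnew} has been stated for \emph{every} Banach space containing such a sequence, so no separate verification is needed about the ambient space $E$ --- in particular, unlike Theorem \ref{mainappli} we do not need to invoke Rosenthal's $\ell_1$-theorem, since weak nullity of the given sequence is assumed outright. The real work is absorbed into Theorem \ref{thmnewnew} (where the measurability of $\kappa$ is used to secure $\mathrm{Pl}_1(\aleph_\omega)$ after suitably collapsing $\kappa$ to become $\aleph_\omega$, with GCH preserved) and into Lemma \ref{wnullunco}, which is what allows the constant value $l_m$ produced by the partition property to be promoted from an arbitrary natural number down to $0$.
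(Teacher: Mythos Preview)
Your proposal is correct and matches the paper's own proof, which is the single line ``Follows immediately by Theorem \ref{thmnewnew} and Theorem \ref{thmnewnewnew}.'' One small imprecision in your elaboration: $\mathrm{Pl}_1(\aleph_\omega)$ does not yield a single homogeneous set $\xx$ on which $c_{\mathrm{un}}$ is constant on each $[\xx]^m$ (that would be the much stronger relation $\aleph_\omega\to(\omega)^{<\omega}_\omega$), but rather a sequence $(\xx_i)$ of infinite disjoint sets with $c_{\mathrm{un}}$ constant on each rectangle $\prod_{i\leq m}\xx_i$ --- this is harmless here since you are ultimately just invoking Theorem \ref{thmnewnewnew} as a black box, but it is worth keeping the distinction straight.
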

\begin{proof}
Follows immediately by Theorem \ref{thmnewnew} and Theorem \ref{thmnewnewnew}.
\end{proof}
There is another well-known combinatorial property of a cardinal
$\ka$ which is implied by $\pl_1(\ka)$and which is in turn
sufficient for the estimate $\mathfrak{nc}_{\mathrm{seq}}\leq
\ka$. This property is in the literature called  the \textit{free
set property} of $\ka$ (see \cite{Sh1}, \cite{Ko}, \cite{DT} and
the references therein).
\begin{defn}\label{freesetproperty}
By a \emph{structure} on $\ka$ we mean a first order structure
$\mathcal{M}=(\ka,(f_i)_{i\in\om})$, where $n_i\in\om$ and
$f_i:\ka^{n_i}\to \ka$ for all $i\in\om$.

The free set property of $\ka$, denoted by $\mathrm{Fr}_\om(\ka,\om)$,
is the assertion that every structure $\mathcal{M}=(\ka,(f_i)_{i\in\om})$
has a free infinite set. That is, there exists an infinite subset $L$ of
$\ka$ such that every element $x$ of $L$ does not belong to the
substructure of $\mathcal{M}$ generated by $L\setminus\{x\}$.
\end{defn}
We need the following fact (its proof is left to the interested
reader).
\begin{fact}
Let $\ka$ be a cardinal. Then the following are equivalent.
\begin{enumerate}
\item[(a)] $\mathrm{Fr}_\om(\ka,\om)$ holds.
\item[(b)] For every structure $\mc M=(\ka,(f_i)_{i\in\om})$ there
exists an infinite subset $L$ of $\ka$ such that for every
$x\in L$ we have
\begin{equation}\nonumber
x\notin \set{f_i(s)}{s\in (L\setminus \{x\})^{n_i} \text{ and } i\in \om }.
\end{equation}
\item[(c)] Every \emph{extended} structure $\mathcal{N}=(\ka,(g_i)_{i\in\om})$,
where $g_i:\ka^{<\om}\to [\ka]^{\le \om}$ for all $i\in\om$, has an
infinite free subset. That is, there exists an infinite subset $L$
of $\ka$ such that for every $x\in L$ we have
\begin{equation}\nonumber
x\notin \bigcup_{i\in \om}\bigcup_{s\in (L\setminus \{x\})^{<\om}} g_i(s).
\end{equation}
\end{enumerate}
\end{fact}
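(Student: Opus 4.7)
The plan is to establish the cycle (a) $\Rightarrow$ (b) $\Rightarrow$ (c) $\Rightarrow$ (a). The implication (a) $\Rightarrow$ (b) is immediate: for any $A \sset \ka$ the substructure of $\mathcal{M} = (\ka, (f_i)_{i\in\om})$ generated by $A$ contains in particular every element of the form $f_i(s)$ with $s \in A^{n_i}$ and $i \in \om$. Hence any infinite set free in the sense of (a) is also free in the sense of (b).

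For (b) $\Rightarrow$ (c), given an extended structure $\mathcal{N} = (\ka, (g_i)_{i\in\om})$ with $g_i \colon \ka^{<\om} \to [\ka]^{\le\om}$, I would reduce the set-valued functions to a countable family of point-valued ones. For each triple $(i, k, n) \in \om^3$ fix a function $h_{i,k,n} \colon \ka^k \to \ka$ which enumerates the (at most countable) set $g_i(s)$ for $s \in \ka^k$, using a fixed default value whenever the enumeration is exhausted. Reindexing the countable family $(h_{i,k,n})$ as $(f_j)_{j\in\om}$ yields an ordinary structure $\mathcal{M}'$. An infinite $L \sset \ka$ provided by (b) applied to $\mathcal{M}'$ satisfies $x \ne h_{i,k,n}(s)$ for every $x \in L$, all $i, k, n \in \om$ and every $s \in (L \setminus \{x\})^k$. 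This translates to $x \notin g_i(s)$ for every $i \in \om$ and every $s \in (L \setminus \{x\})^{<\om}$, so $L$ is free for $\mathcal{N}$.

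For (c) $\Rightarrow$ (a), given a structure $\mathcal{M} = (\ka, (f_i)_{i\in\om})$, I would define a single set-valued function $g_0 \colon \ka^{<\om} \to [\ka]^{\le\om}$ by letting $g_0(s)$ be the substructure of $\mathcal{M}$ generated by the entries of $s$. Since the language has countably many function symbols and the set of terms in any fixed finite tuple of variables is countable, $g_0(s)$ is indeed at most countable. Take $g_i$ to be constantly $\emptyset$ for $i \ge 1$, forming an extended structure $\mathcal{N}$. Applying (c) to $\mathcal{N}$ produces an infinite $L \sset \ka$ such that $x \notin g_0(s)$ for every $x \in L$ and every $s \in (L \setminus \{x\})^{<\om}$. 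If some $x \in L$ lay in the substructure generated by $L \setminus \{x\}$, then $x$ would equal $t(a_1, \ldots, a_k)$ for some term $t$ in the language and some $a_1, \ldots, a_k \in L \setminus \{x\}$; taking $s = (a_1, \ldots, a_k)$ would then place $x$ in $g_0(s)$, a contradiction. Hence $L$ is free in the sense of (a).

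The arguments above amount to essentially bookkeeping, and no serious obstacle arises. The only point requiring any genuine care is the verification that the substructure generated by the entries of a fixed finite tuple is at most countable --- which is where the countability of the language is actually used --- so that the reduction from an arbitrary structure to an extended structure in (c) $\Rightarrow$ (a) is legitimate.
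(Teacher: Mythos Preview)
Your proof is correct; the paper itself gives no proof of this Fact, explicitly leaving it to the reader, and your cycle (a) $\Rightarrow$ (b) $\Rightarrow$ (c) $\Rightarrow$ (a) via enumeration of the countable sets $g_i(s)$ and the countability of generated substructures is exactly the standard bookkeeping argument one would expect.
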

As we have already indicated above, one can use the property $\mathrm{Fr}_\om(\ka,\om)$ to derive
the conclusion of Theorem \ref{thmnewnewnew}. More precisely, we have the following.
\begin{thm}
\label{mainappli2} Let $\ka$ be a cardinal and assume that
$\mathrm{Fr}_\om(\ka,\om)$ holds. Then every normalized weakly null
sequence $(x_\al:\al<\ka)$ has an $1$-unconditional subsequence.
\end{thm}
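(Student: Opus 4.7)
The plan is to use the equivalent formulation (c) of $\mathrm{Fr}_\om(\ka,\om)$ from the Fact, namely the one that allows countably many extended operations $g_i:\ka^{<\om}\to[\ka]^{\le\om}$ and returns an infinite free set. I will cook up such operations whose values encode the ``large coordinates'' of a norming functional, so that freeness forces a single functional chosen on a base tuple $\bar\beta$ to be nearly annihilated by every $x_\al$ with $\al\in L\setminus\bar\beta$; a drop--one perturbation calculation then upgrades this into full $1$-unconditionality in the limit $l\to\infty$.

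For the construction of the structure, for every quadruple $(l,m,F,\bar a)$ with $l\ge 1$, $m\ge 1$, $F\sset\{0,\dots,m-1\}$ and $\bar a=(a_0,\dots,a_{m-1})\in\Q^m$, fix, by Hahn--Banach, for every $\bar\al=(\al_0,\dots,\al_{m-1})\in\ka^m$, a functional $y^*_{\bar\al}\in B_{E^*}$ norming $\sum_{i\in F}a_ix_{\al_i}$ (suppressing the dependence on $(l,m,F,\bar a)$ in the notation). Set
\[ g_{l,m,F,\bar a}(\bar\al)=\{\ga<\ka\,:\,|y^*_{\bar\al}(x_\ga)|>1/l\}. \]
Since $(x_\al:\al<\ka)$ is normalized weakly null --- read in the strong finitary transfinite form that for every $y^*\in E^*$ and every $\ee>0$ the set $\{\al<\ka:|y^*(x_\al)|>\ee\}$ is finite --- each value of $g_{l,m,F,\bar a}$ is finite, hence in $[\ka]^{\le\om}$. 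The collection of all these maps (as $(l,m,F,\bar a)$ varies) is countable, so $\mathrm{Fr}_\om(\ka,\om)$ in form (c) yields an infinite $L\sset\ka$ such that $\al\notin g_{l,m,F,\bar a}(\bar\beta)$ whenever $\al\in L$ and $\bar\beta\in(L\setminus\{\al\})^m$.

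The drop--one step finishes the argument. Fix $T=\{\al_0<\dots<\al_m\}\sset L$, rational scalars $(a_i)_{i=0}^m$, $F\subsetneq\{0,\dots,m\}$, $j\in\{0,\dots,m\}\setminus F$, and an integer $l\ge 2$. Let $\bar\beta\in\ka^m$ denote the tuple $(\al_i)_{i\neq j}$ and let $\tilde F,\tilde{\bar a}$ be the obvious re-indexings of $F$ and $(a_i)_{i\neq j}$ so that the pair $(\tilde F,\tilde{\bar a})$ fits the parameter format used in the construction of $g_{l,m,\tilde F,\tilde{\bar a}}$. Freeness then gives $|y^*_{\bar\beta}(x_{\al_j})|\le 1/l$, where $y^*_{\bar\beta}$ is the associated norming functional. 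Writing $A:=\|\sum_{i\in F}a_ix_{\al_i}\|$ and $B:=\|\sum_{i\in F\cup\{j\}}a_ix_{\al_i}\|$, from
\[ A=y^*_{\bar\beta}\Big(\sum_{i\in F\cup\{j\}}a_ix_{\al_i}\Big)-a_jy^*_{\bar\beta}(x_{\al_j})\le B+\tfrac{|a_j|}{l}\quad\text{and}\quad |a_j|\le A+B \]
one reads off $A(1-1/l)\le B(1+1/l)$, i.e.\ $A\le\tfrac{l+1}{l-1}B$. Iterating along any chain $\{0,\dots,m\}=G_0\supsetneq\cdots\supsetneq G_k=F$ gives $\|\sum_Fa_ix_{\al_i}\|\le(\tfrac{l+1}{l-1})^{m+1}\|\sum_{i=0}^m a_ix_{\al_i}\|$. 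Since the structure simultaneously contains the maps for every $l$, the bound holds for every $l$; letting $l\to\infty$, $(\tfrac{l+1}{l-1})^{m+1}\to 1$, so $(x_{\al_i})_{i=0}^m$ is $1$-unconditional for rational, and hence by continuity for real, scalars. As $T\sset L$ was arbitrary, $(x_\al:\al\in L)$ is an infinite $1$-unconditional basic sequence.

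The main delicate point is the countability of the values of $g_{l,m,F,\bar a}$; this forces us to fix the scalars from the countable set $\Q$ and to read weak nullness in the strong finitary sense (which is what the Corollary \ref{cor123456} statement already tacitly uses). Once this is in place, the freeness of $L$ controls one ordinal at a time, which matches the drop--one estimate perfectly; without the drop--one trick, a multi-index non-unconditionality witness (involving several ordinals of $T\setminus F$ simultaneously) could not be destroyed directly by the free set property.
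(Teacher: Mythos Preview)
Your argument is correct, but the paper takes a cleaner route that avoids both the restriction to rational scalars and the drop-one iteration. Instead of fixing one norming functional per choice $(l,m,F,\bar a)$, the paper attaches to each finite $s\sset\ka$ a \emph{countable} $1$-norming set $F_s\sset S_{E^*}$ for the finite-dimensional span $E_s=\mathrm{span}\{x_\al:\al\in s\}$, and defines a single extended operation
\[ g(s)=\{\al<\ka:\ x^*(x_\al)\neq 0 \text{ for some } x^*\in F_s\}. \]
Under the same finitary reading of weak-nullness you invoke, each $g(s)$ is countable, so $\mathrm{Fr}_\om(\ka,\om)$ in form (c) applies. If $L$ is free and $s\sset t$ are finite subsets of $L$, then any $y^*\in F_s$ that $(1+\ee)$-norms $\sum_{\al\in s}a_\al x_\al$ satisfies $y^*(x_\al)=0$ for \emph{every} $\al\in t\setminus s$ simultaneously, since each such $\al$ lies outside $g(s)$. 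This gives $\|\sum_s a_\al x_\al\|\le(1+\ee)\|\sum_t a_\al x_\al\|$ in one stroke, for arbitrary real scalars, and letting $\ee\to 0$ finishes. The point is that by packaging a whole countable norming set into $g(s)$ the paper obtains \emph{exact} vanishing on free coordinates rather than the $1/l$-approximate vanishing your construction gives; this is what eliminates the need for the iterated $(l+1)/(l-1)$ bound and the density argument from $\Q$ to $\R$.
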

\begin{proof}
Let $(x_\al:\al<\ka)$ be a normalized weakly null sequence in
a Banach space $E$.  For every $s\in [\ka]^{<\om}$ we select
a subset $F_s$ of $S_{E^*}$ which is countable and 1-norming
for the finite-dimensional subspace $E_s:=\mathrm{span}\{x_\al:\al\in s\}$
of $E$. That is, for every $x\in E_s$ we have
\begin{equation}\label{jeieef}
\nrm{x}=\sup\set{x^*(x)}{x\in F_s}.
\end{equation}
Define $g:[\ka]^{<\om}\to [\ka]^{\le \om}$ by
\begin{equation}
g(s)=\set{\al<\ka}{\text{there is some $x^*\in F_s$ such that $x^*(x_\al)\neq 0$}}.
\end{equation}
Since $(x_\al:\al<\ka)$ is weakly null and $F_s$ is countable, we
see that $g(s)$ is also countable; i.e. $g$ is well-defined.
Consider the extended structure $\mathcal{N}=(\ka,g)$. Since
$\mathrm{Fr}_\om(\ka,\om)$ holds, there exists an infinite free
subset $L$ of $\ka$. We claim that the sequence $(x_\al:\al\in L)$
is 1-unconditional.

Indeed, let $s$ and $t$ be finite subsets of $L$ with $s\subseteq
t$. Fix a sequence $(a_\al:\al\in t)$ of scalars and let $\vep>0$
arbitrary. By equality (\ref{jeieef}) above, we may select $y^*\in
F_s$ such that
\begin{equation}
\nrm{\sum_{\al\in s} a_\al x_\al}\le (1+\vep) \cdot
y^*\big(\sum_{\al\in s} a_\al x_\al\big).
\end{equation}
The set $L$ is free, and so, for every $\al\in t\setminus s$ we have
$\al\notin g(s)$. This implies, in particular, that $y^*(x_\al)=0$
for every $\al\in t\setminus s$. Hence
\begin{eqnarray*}
\nrm{\sum_{\al\in s} a_\al x_\al} & \le &  (1+\vep)\cdot
y^*\big(\sum_{\al\in s}
a_\al x_\al\big) =(1+\vep) \cdot y^*\big(\sum_{\al\in t} a_\al x_\al\big)\\
& \le & (1+\vep) \cdot \nrm{\sum_{\al\in t} a_\al x_\al}.
\end{eqnarray*}
Since $\vep>0$ was arbitrary, the result follows.
\end{proof}


\section{Proof of Lemma \ref{MainLemma}}

Assume that $\la<\ka$ is a pair of two infinite cardinals with $\la$ regular and $\ka$ strongly compact.
Let $G$ be a $\col$-generic filter. \textit{The generic filter $G$ will be fixed until the end of
the proof. We also fix a $\ka$-complete normal ultrafilter $\mathcal{U}$ on $\ka$}.

Let $d\in\om$ with $d\geq 1$ arbitrary. Let $\set{V_\al}{\al\in\mathrm{Ord}}$ be the von-Neumann hierarchy
of $V$. As $\ka$ is inaccessible (being strongly compact), we see that $|V_\ka|=\ka$.
For every coloring $c:[(\expp_d(\ka))^+]^{d+1}\to  V_\ka$ we let
\begin{equation}
\label{esol1} \mathrm{Sol}^{\om}_{d,\ka}(c)= \set{\xx\in [(\expp_d(\ka))^+]^\om}{c\rest [\xx]^{d+1}
\text{ is constant}}
\end{equation}
and we define
\begin{equation}
\label{esol2} \mathrm{Sol}^{\om}_{d,\ka}= \set{\mathrm{Sol}^{\om}_{d,\ka}(c)}{c:[(\expp_d(\ka))^+]^{d+1}
\to  V_\ka\text{ is a coloring}}.
\end{equation}
The idea of considering the family of sets which are monochromatic with respect to a coloring is taken
from Shelah's paper \cite{Sh2} and has been also used by other authors (see, for instance, \cite{Mi}).
\begin{fact}
\label{fsol1} The following hold.
\begin{enumerate}
\item[(a)] For every coloring $c:[(\expp_d(\ka))^+]^{d+1}\to V_\ka$ we have
$\mathrm{Sol}^{\om}_{d,\ka}(c)\neq \emptyset$.
\item[(b)] The family $\mathrm{Sol}^\om_{d,\ka}$ is $\ka$-complete. That is, for every
$\de<\ka$ and every sequence $(A_\xi:\xi<\de)$ in $\mathrm{Sol}^\om_{d,\ka}$ we
have that $\bigcap_{\xi<\de}A_\xi\in \mathrm{Sol}^\om_{d,\ka}$.
\end{enumerate}
\end{fact}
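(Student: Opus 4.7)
For part (a), my plan is to invoke the classical Erd\H{o}s--Rado partition theorem $\expp_d(\ka)^+ \to (\ka^+)^{d+1}_\ka$. Since $\ka$ is strongly compact it is in particular inaccessible, so $|V_\ka|=\ka$, and therefore $c$ is a coloring of the $(d+1)$-element subsets of $(\expp_d(\ka))^+$ using at most $\ka$ colors. Erd\H{o}s--Rado then produces a homogeneous set $H\subseteq (\expp_d(\ka))^+$ of cardinality $\ka^+$; any countably infinite subset $\xx$ of $H$ lies in $\mathrm{Sol}^\om_{d,\ka}(c)$, so the solution family is non-empty. No additional combinatorial or forcing input is needed for this half.

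For part (b), the strategy is to package $<\ka$ colorings into a single one. Given $(A_\xi : \xi<\de)$ with $\de<\ka$, I would pick witnesses $c_\xi : [(\expp_d(\ka))^+]^{d+1} \to V_\ka$ with $A_\xi = \mathrm{Sol}^\om_{d,\ka}(c_\xi)$ and define a new coloring by
\[ c^*(s) \,=\, \langle c_\xi(s) : \xi<\de \rangle \qquad \text{for } s\in [(\expp_d(\ka))^+]^{d+1}. \]
The key observation is that the range of $c^*$ still lies in $V_\ka$: since $\ka$ is inaccessible and $\de<\ka$, every $\de$-sequence of elements of $V_\ka$ is bounded in rank strictly below $\ka$ and hence is itself a member of $V_\ka$. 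With this in place, an $\om$-set $\xx$ is $c^*$-monochromatic precisely when it is simultaneously $c_\xi$-monochromatic for every $\xi<\de$, so
\[ \mathrm{Sol}^\om_{d,\ka}(c^*) \,=\, \bigcap_{\xi<\de} A_\xi, \]
which exhibits the intersection as a member of $\mathrm{Sol}^\om_{d,\ka}$.

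I expect the only mildly subtle step to be checking that the concatenated coloring $c^*$ genuinely takes values in $V_\ka$; this is a routine rank-bound argument using the strong-limit and regularity properties of $\ka$, and once it is verified both assertions follow immediately. Notice also that neither clause invokes anything about the generic filter $G$, the ultrafilter $\mathcal{U}$, or the auxiliary cardinal $\la$: only the inaccessibility of $\ka$ (which passes to $V[G]$ by the $\ka$-c.c.\ of $\col$) is used, so the fact goes through unchanged in the generic extension.
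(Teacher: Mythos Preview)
Your proposal is correct and matches the paper's proof essentially line for line: part (a) is Erd\H{o}s--Rado together with $|V_\ka|=\ka$, and part (b) is the product coloring $c^*(s)=(c_\xi(s):\xi<\de)$ together with the observation $(V_\ka)^\de\subseteq V_\ka$, yielding $\mathrm{Sol}^\om_{d,\ka}(c^*)=\bigcap_{\xi<\de}A_\xi$. Your additional remark that only the inaccessibility of $\ka$ is used (not $G$, $\uuu$, or $\la$) is accurate and not made explicit in the paper.
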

\begin{proof}
(a) By our assumptions we see that $|V_\ka|=\ka$. Moreover, by the
classical Erd\H{o}s-Rado partition Theorem (see \cite{Ku}), we have
\[ (\expp_d(\ka))^+ \rightarrow (\ka^+)^{d+1}_\ka \]
and the result follows. \\
(b) For every $\xi<\de$ let $c_\xi: [\expp_d(\ka))^+]^{d+1}\to  V_\ka$ be a
coloring such that $A_\xi=\mathrm{Sol}^\om_{d,\ka}(c_\xi)$. Observe that
$(V_\ka)^\de\subseteq V_\ka$. We define the coloring
$c:[\expp_d(\ka))^+]^{d+1}\to (V_\ka)^\de$ by $c(s)=(c_\xi(s):\xi<\de)$.
Noticing that
\[ \bigcap_{\xi<\de}\mathrm{Sol}^\om_{d,\ka}(c_\xi)=\mathrm{Sol}^\om_{d,\ka}(c)\]
the proof is completed.
\end{proof}
By Fact \ref{fsol1}(b) and our hypothesis that $\ka$ is a strongly compact cardinal, we see that there
exists a $\ka$-complete ultrafilter $\vvv$ on $[\expp_d(\ka))^+]^\om$ extending the family
$\mathrm{Sol}^\om_{d,\ka}$. \textit{We fix such an ultrafilter $\mathcal{V}$}.
\begin{defn}
\label{dcanonical1} A \emph{$\vvv$--sequence of conditions} is a sequence $\overp=(p_\xx:\xx\in A)$ in
$\coll(\lambda,<\kappa)$, belonging to the ground model $V$ and  indexed by a member $A$ of the ultrafilter
$\vvv$. We will refer to the set $A$ as the \emph{index set} of $\overp$ and we shall denote it by
$I(\overp)$.
\end{defn}
\begin{defn}
\label{dcanonical2} Let $\overp=(p_\xx:\xx\in I(\overline{p}))$ be a $\vvv$--sequence of conditions.
We say that a condition $r$ in $\coll(\lambda,<\kappa)$ is a \emph{root} of $\overp$ if
\begin{equation}
\label{e1} (\uuu\al) \ (\vvv\xx) \ \ p_\xx\upharpoonright
\al=r\footnote[2]{This is an abbreviation of the statement that
$\set{\al}{\set{\xx}{p_\xx\upharpoonright \al=r}\in\vvv}\in\uuu$.}.
\end{equation}
\end{defn}
Related to the above definitions, we have the following.
\begin{fact}
\label{fcanonical} Every $\vvv$--sequence of conditions $\overline{p}$ has a unique root
$r(\overline{p})$.
\end{fact}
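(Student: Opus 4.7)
The plan is to extract the root one coordinate at a time along $\ka$ using $\ka$-completeness of $\vvv$, and then to collapse the resulting transfinite chain of candidates to a single condition using $\ka$-completeness of $\uuu$.

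First I would fix $\al<\ka$ and partition the index set $A=I(\overp)\in\vvv$ into the pieces
\[
A_q=\{\xx\in A:p_\xx\rest\al=q\},
\]
indexed by the conditions $q\in\col$ with $\dom(q)\sset\la\times\al$. Since $\ka$ is inaccessible (being strongly compact) and $\la<\ka$, the number of such conditions is strictly less than $\ka$. The $\ka$-completeness of $\vvv$ then selects a unique $q$, which I would call $r_\al$, with $A_{r_\al}\in\vvv$.

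Next, I would verify the coherence property $r_\al=r_\be\rest\al$ for $\al\le\be<\ka$: any $\xx\in A_{r_\be}$ also witnesses $p_\xx\rest\al=r_\be\rest\al$, so $A_{r_\be}\sset A_{r_\be\rest\al}$, and uniqueness of $r_\al$ forces equality. In particular, the family $(r_\al)_{\al<\ka}$ is $\sset$-increasing.

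The crux is then to stabilize this family. Since each $r_\al$ lies in $\col$, we have $|r_\al|<\la$, so the map $\al\mapsto |r_\al|$ partitions $\ka$ into fewer than $\ka$ pieces. By $\ka$-completeness of $\uuu$, there exist $\ga^*<\la$ and a set $B\in\uuu$ on which $|r_\al|=\ga^*$. For $\al\le\be$ both in $B$, the containment $r_\al\sset r_\be$ combined with $|r_\al|=|r_\be|=\ga^*$ forces $r_\al=r_\be$. Hence there is a single $r\in\col$ with $r_\al=r$ for every $\al\in B$, and by construction $(\uuu\al)(\vvv\xx)\ p_\xx\rest\al=r$, so $r$ is a root. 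Uniqueness is then routine: for any two roots $r,r'$ there is $\al$ for which both $\{\xx:p_\xx\rest\al=r\}$ and $\{\xx:p_\xx\rest\al=r'\}$ belong to $\vvv$, and any $\xx$ in the intersection forces $r=r'$. I do not expect a serious obstacle; the key observation is simply that an $\sset$-increasing chain of conditions of constant cardinality must be constant, which is what turns the coherence of step two into the desired stabilization.
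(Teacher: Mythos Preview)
Your stabilization step contains a genuine gap. The assertion that ``an $\sset$-increasing chain of conditions of constant cardinality must be constant'' is true only when that cardinality is finite; for infinite $\ga^*$ it fails trivially (think of $\om\subsetneq\om+1\subsetneq\om+2\subsetneq\cdots$, all of cardinality $\aleph_0$). Since $\la$ is an arbitrary regular cardinal below $\ka$---and in the applications of the lemma it is uncountable---there is no reason for the conditions $r_\al$ to be finite, so from $|r_\al|=|r_\be|=\ga^*$ and $r_\al\sset r_\be$ you cannot conclude $r_\al=r_\be$.

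The repair is short: the $\sset$-chain $(r_\al)_{\al<\ka}$ can assume at most $\la$ distinct values, because a strictly increasing $\sset$-chain of length $\la+1$ consisting of sets each of size $<\la$ would force its top element to have $\ge\la$ members. Since $\la<\ka$ and $\uuu$ is $\ka$-complete, one value $r$ is then attained on a set in $\uuu$, and the rest of your argument goes through unchanged. The paper itself takes a different route, exploiting the \emph{normality} of $\uuu$: on the $\uuu$-large set of limit ordinals of cofinality $>\la$ the map $\al\mapsto\sup\big((\dom(r_\al))_1\big)$ is regressive, so Fodor's lemma yields a bound $\ga_0<\ka$ on the second coordinates of the domains; there are then fewer than $\ka$ conditions supported in $\la\times\ga_0$, and $\ka$-completeness of $\uuu$ stabilizes the value.
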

\begin{proof}
For every $\al<\ka$ the map $I(\overline{p})\ni\xx\mapsto p_\xx\rest\al$ has fewer than $\ka$ values.
So, by the $\ka$-completeness of $\mc V$, there exist $p_\al\in \col$ and
$I_\al\in \mc V\rest I(\overline{p})$ so that $p_\xx\rest \al=p_\al$
for all $\xx\in I_\al$. Hence, we can select a sequence
$(p_\al:\al<\ka)$ in $\col$ and a decreasing sequence
$(I_\al:\al<\ka)$ of elements of $\mc V\rest I(\overline{p})$ such
that for every $\al<\ka$ and every $\xx\in I_\al$ we have that
$p_\xx\rest \al=p_\al$.

Let $A\sset \ka$ be the set of all limit ordinals $\al<\ka$ with
$\mathrm{cf}(\al)>\la$. Since $\mc U$ is normal, the set $A$ is in
$\mc U$. Consider the mapping $c:A\to\ka$ defined by
\[ c(\al)= \sup\set{\xi}{\xi\in(\mathrm{dom}(p_\al\rest \al))_1}\]
for every $\al\in A$. As $\mathrm{cf}(\al)>\la$, we get that $c$ is
a regressive mapping. The ultrafilter $\mc U$ is normal, and so,
there exist $A'\in \mc U\rest A$ and $\ga_0<\ka$ such that
$c(\al)=\ga_0$ for every $\al\in A'$. Now consider the map
\[ A'\ni\al\mapsto p_\al\rest\al=p_\al\rest\ga_0\sset (\la \times \ga_0)\times \ga_0.\]
Noticing that
$|\mathcal{P}\big((\la\times\ga_0)\times\ga_0\big)|<\ka$ and
recalling that $\uuu$ is $\ka$-complete, we see that there exist
$A''\in \mc U\rest A'$ and $r(\overline{p})$ in $\col$ such that
$p_\al\rest\al =r(\overline{p})$ for every $\al\in A''$. It follows
that for every $\al\in A''$ the set $\set{\xx\in [\expp_d(\ka))^+]^\om}{p_\xx\rest\al=r(\overline{p})}$
contains the set $I_\al$, and so
\[ (\uuu\al) \ (\vvv\xx) \ \ p_\xx\upharpoonright \al=r(\overline{p}). \]
The uniqueness of $r(\overline{p})$ is an immediate consequence of
property (\ref{e1}) in Definition \ref{dcanonical2}. The proof is
completed.
\end{proof}
We are ready to introduce the ideal $\iii_d$.
\begin{defn}\label{defideal}
In $V[G]$ we define
\[ \iii_d=\set{I\sset [\expp_d(\ka))^+]^\om}{\text{ there is some }
A\in \mc V \text{ such that } I\cap A=\emptyset}.\]
\end{defn}
We isolate, for future use, the following (easily verified) properties of $\iii_d$.
\begin{enumerate}
\item[(P1)] $\iii_d$ is an ideal; in fact, $\iii_d$ is a $\ka$-complete ideal.
\item[(P2)] $\vvv\sset \iii_d^+$.
\item[(P3)] If $A\in\vvv$ and $B\in \iii_d^+$, then $A\cap B\in \iii_d^+$.
\end{enumerate}
For every $\vvv$--sequence of conditions $\overline{p}$ we let
\begin{equation}
\label{dp} D_{\overline p}=\set{\xx\in I(\overline p)}{p_\xx \in G}.
\end{equation}
Now we are ready to introduce the set $\ddd_d$.
\begin{defn}
\label{defdense}
In $V[G]$ we define
\[ \ddd_d=\set{D_{\overline p}}{\overline p \text{ is a $\vvv$--sequence of conditions in
the ground model $ V$}}\cap \iii_d^+.\]
\end{defn}
By definition, we have that $\ddd_d\subseteq \iii_d^+$. The rest of the
proof will be devoted to the verification that the ideal $\iii_d$ and
the set $\ddd_d$ satisfy the requirements of Lemma \ref{MainLemma}. To
this end, we need the following.
\begin{lem}
\label{characterization} Let $\overline p=(p_\xx:\xx\in I(\overline{p}))$ be a $\vvv$--sequence
of conditions. Then the following are equivalent.
\begin{enumerate}
\item[(1)] $D_{\overline p}\in \ddd_d$.
\item[(2)] $r(\overline{p})\in G$.
\end{enumerate}
\end{lem}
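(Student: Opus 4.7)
The plan is to prove both directions by combining the density of $G$ in $\col$ with the root property $(\uuu\al)(\vvv\xx)\ p_\xx\rest\al=r(\overline{p})$ from Definition \ref{dcanonical2}. Throughout I write $r:=r(\overline{p})$ and I recall that $I\in\iii_d^+$ exactly when $I\cap A\neq\emptyset$ for every $A\in\vvv$. Thus proving (1)$\Leftrightarrow$(2) amounts to showing: for every $A\in\vvv$ with $A\sset I(\overline{p})$, the intersection $D_{\overline p}\cap A$ is nonempty if and only if $r\in G$.

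For $(2)\Rightarrow (1)$, assume $r\in G$ and fix $A\in\vvv$; replacing $A$ by $A\cap I(\overline{p})$ one may assume $A\sset I(\overline{p})$. Working in $V$, I consider
\[ E_A=\set{q\in\col}{q\perp r\ \text{or}\ (\exists\xx\in A)\ q\le p_\xx}. \]
The core step is showing that $E_A$ is dense below any $s\in\col$. If $s\perp r$ we take $q=s$; otherwise fix a common extension $s'\le s,r$. Using the $\ka$-completeness of $\uuu$ and the fact that $|\mm{dom}(s')|<\la<\ka$, I choose $\al<\ka$ large enough that $\mm{dom}(s')\sset\la\times\al$ and so that $A_\al:=\set{\xx\in A}{p_\xx\rest\al=r}\in\vvv$. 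For any $\xx\in A_\al$, the partial functions $s'$ and $p_\xx$ agree on $\mm{dom}(s')\cap\mm{dom}(p_\xx)\sset\mm{dom}(r)$ (both restrict to $r$ there), so $s'\cup p_\xx\in\col$ is a common extension lying in $E_A$. By genericity, $G\cap E_A\neq\emptyset$; the first alternative is excluded because $r\in G$, so we obtain $\xx\in A$ with $p_\xx\in G$, i.e. $\xx\in D_{\overline p}\cap A$.

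For $(1)\Rightarrow (2)$, I argue contrapositively. If $r\notin G$, genericity of $G$ yields $q_0\in G$ with $q_0\perp r$. Let $A=\set{\xx\in I(\overline{p})}{q_0\perp p_\xx}\in V$; if $A\in\vvv$ then $D_{\overline{p}}\cap A=\emptyset$ (since $\xx\in A$ forces $p_\xx\notin G$), violating $D_{\overline{p}}\in\iii_d^+$. So it suffices to show $A\in\vvv$, equivalently that $B=\set{\xx\in I(\overline{p})}{q_0\| p_\xx}$ is not in $\vvv$. Suppose toward contradiction $B\in\vvv$. Again using $\ka$-completeness of $\uuu$ I choose $\al<\ka$ with $\mm{dom}(q_0)\sset\la\times\al$ and $B_\al:=\set{\xx\in B}{p_\xx\rest\al=r}\in\vvv$. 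Any $\xx\in B_\al$ has $q_0\| p_\xx$, so there is a common extension; on $\mm{dom}(q_0)\cap\mm{dom}(r)\sset\mm{dom}(q_0)\cap\mm{dom}(p_\xx)$ the values of $q_0$ and of $r=p_\xx\rest\al$ both coincide with those of $p_\xx$, so $q_0$ and $r$ are themselves compatible, contradicting $q_0\perp r$.

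The main technical point in both directions is the same: the Fubini-style statement $(\uuu\al)(\vvv\xx)\ p_\xx\rest\al=r$ must be combined with the $\ka$-completeness of $\uuu$ to produce, for whichever $V$-condition ($s'$ or $q_0$) is currently under consideration, an $\al<\ka$ large enough to contain the condition's domain and a $\vvv$-positive set of indices $\xx$ on which $p_\xx$ has been completely stabilised below $\al$. Once such an $\al$ is in hand, the compatibility arguments reduce to the easy observation that two conditions agreeing with $r$ on $\la\times\al$ automatically cohere with any ground-model condition supported in $\la\times\al$. This is exactly the mechanism that makes the root $r(\overline{p})$ the right forcing surrogate for membership of $D_{\overline{p}}$ in the dense set $\ddd_d$.
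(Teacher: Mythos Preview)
Your proof is correct and follows essentially the same approach as the paper; in particular, your density argument for $(2)\Rightarrow(1)$ via the set $E_A$ is exactly the paper's argument, with the compatibility check spelled out a bit more carefully. For $(1)\Rightarrow(2)$ the paper is more direct than your contrapositive: pick any $\al$ in the $\uuu$-set witnessing the root, note that the $\vvv$-set $\{\xx:p_\xx\rest\al=r\}$ consists of $\xx$ with $p_\xx\le r$, and intersect it with $D_{\overline p}$ (possible since $D_{\overline p}\in\iii_d^+$) to get some $p_\xx\in G$ extending $r$---this avoids the need to produce an incompatible $q_0$ and argue about the complement.
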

\begin{proof}
(1)$\Rightarrow$(2) Assume that $D_{\overline p}\in \ddd_d$. We use the fact that
$D_{\overline{p}}\in \iii_d^+$ and that
\[ (\mc U \al) \ (\mc V \xx) \ \ p_\xx \rest \al=r(\overline p)\]
to find $\xx\in D_{\overline p}$ such that $p_\xx \le r(\overline{p})$. By the definition of
$D_{\overline p}$, we see that $p_{\xx}\in G$,
and so, $r(\overline p)\in G$ as well.\\
(2)$\Rightarrow$(1) Suppose that $r(\overline{p})\in G$. Fix a
ground model set $A$ which is in $\mc V$. It is enough to show that
$D_{\overline p}\cap A\neq \emptyset$. To this end, let
\[ E=\set{q\in \col}{q \perp r(\overline p)\text{ or there is }
\xx\in I(\overline p)\cap A \text{ with } q\le p_\xx}.\] We claim
that $E$ is a dense subset of $\col$. To see this, let $r\in \col$
arbitrary. If $r\perp r(\overline p)$, then $r\in E$. So, suppose
that $r\parallel r(\overline p)$. Using this and the fact that
\[ (\mc U \al) \ (\mc V \xx) \ \ p_\xx \rest \al=r(\overline p)\]
we may find $\xx\in I(\overline p)\cap A$ such that $p_\xx\ \| \ r$.
So, there exist $q\in\col$ and $\xx\in I(\overline{p})\cap A$ such
that $q\leq p_\xx$ and $q\leq r$. In other words, there exists $q\in
E$ with $q\leq r$. This establishes our claim that $E$ is a dense
subset of $\col$.

It follows by the above discussion that there exists $q\in G$ with
$q\in E$. Since $r(\overline p)\in G$ we have that $r(\overline{p})
\ \| \ q$. Hence, by the definition of the set $E$, there exists
$\xx\in I(\overline p)\cap A$ with $q\le p_\xx$. It follows that
$p_\xx\in G$, and so, $\xx\in D_{\overline p}\cap A$. The proof is
completed.
\end{proof}
\begin{lem}
\label{denseness} $\ddd_d$ is dense in $\iii_d^+$.
\end{lem}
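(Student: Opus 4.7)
The plan is to take an arbitrary $A\in\iii_d^+$, fix a $\col$-name $\dot A$ for $A$ and a condition $q_0\in G$ forcing $\dot A\in\iii_d^+$, and then produce, in the ground model, a $\vvv$-sequence of conditions $\overline p$ satisfying (i) $p_\xx\Vdash\xx\in\dot A$ for every index $\xx$, and (ii) $r(\overline p)\in G$. Lemma \ref{characterization} will turn (ii) into $D_{\overline p}\in\ddd_d$, while (i) will give $D_{\overline p}\sset A$ directly.

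The construction proceeds in the ground model $V$. For each $q\leq q_0$, I would let
\[I^q=\{\xx\in[(\expp_d(\ka))^+]^\om:\text{some }p\leq q\text{ forces }\xx\in\dot A\},\]
and, for each $\xx\in I^q$, choose (by AC) a witness $p^q_\xx\leq q$. The first point to check is that $I^q\in\vvv$, so that $\overline p^q:=(p^q_\xx:\xx\in I^q)$ is indeed a $\vvv$-sequence of conditions. If not, the ultrafilter property gives $(I^q)^c\in\vvv$; but for every $\xx\in(I^q)^c$ the set of conditions forcing $\xx\notin\dot A$ is dense below $q$, so $q\Vdash\xx\notin\dot A$, and hence $q\Vdash\dot A\cap\check{(I^q)^c}=\emptyset$, contradicting $q\leq q_0\Vdash\dot A\in\iii_d^+$.

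With $r^q:=r(\overline p^q)$ supplied by Fact \ref{fcanonical}, observe that since each $p^q_\xx$ extends $q$ and $\dom(q)\sset\la\times\al$ for all sufficiently large $\al<\ka$, the restriction $p^q_\xx\rest\al$ still extends $q$; since the equation $p^q_\xx\rest\al=r^q$ holds for $\uuu$-many $\al$ and $\vvv$-many $\xx$, one gets $r^q\leq q$. Consequently the set $F:=\{r^q:q\leq q_0\}\sset\col$ is dense below $q_0$, so by genericity we may pick $q^*\in F\cap G$. Writing $q^*=r^{q'}$ with $q'\leq q_0$, we have $r(\overline p^{q'})=q^*\in G$; Lemma \ref{characterization} then yields $D_{\overline p^{q'}}\in\ddd_d$, while $D_{\overline p^{q'}}\sset A$ holds because every $\xx\in D_{\overline p^{q'}}$ satisfies $p^{q'}_\xx\in G$ and $p^{q'}_\xx\Vdash\xx\in\dot A$.

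The main obstacle is reconciling the two demands on $\overline p$: each $p_\xx$ must force $\xx\in\dot A$ (which constrains the sequence pointwise), while the root must end up in $G$ (over which we have no direct control). The resolution is not to fix a single base condition but to let $q$ vary: although no preferred $q$ visibly puts $r^q$ into $G$, the roots $\{r^q:q\leq q_0\}$ are cofinal below $q_0$, so the genericity of $G$ supplies the desired witness.
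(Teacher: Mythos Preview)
Your proof is correct and follows essentially the same route as the paper's: both fix a name and a condition forcing positivity, define for each $q$ below it the set $I^q$ of $\xx$ that some extension forces into $\dot A$, verify $I^q\in\vvv$ by contradiction, choose witnesses to form a $\vvv$-sequence whose root lies below $q$, and then use genericity on the resulting dense set of roots together with Lemma~\ref{characterization}. Your justification that $r^q\leq q$ (via the boundedness of $\dom(q)$) is a bit more explicit than the paper's ``clearly $r(\overline q)\leq p$'', and your argument for $I^q\in\vvv$ is phrased slightly differently (you note that $q$ forces $\dot A$ disjoint from $(I^q)^c\in\vvv$, while the paper finds a specific witness $\xx\in(I^q)^c$ forced into $\dot J$), but these are cosmetic differences.
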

\begin{proof}
Fix $J\in \iii_d^+$. We will prove that there exists a $\vvv$--sequence of conditions $\overline q$
in the ground model $V$ satisfying $D_{\overline{q}}\in\ddd_d$ and $D_{\overline q}\sset J$.
This will finish the proof.

To this end, we fix a $\col$-name $\dot{J}$ for $J$. Let $p\in \col$
be an arbitrary condition such that $p\Vdash \dot{J} \notin \iii_d$.
Define, in the ground model $V$, the set
\[ A_p=\set{\xx\in [\expp_d(\ka))^+]^\om}{\text{there is }q\le p \text{ such that }
q \Vdash \check{\xx}\in \dot{J}}.\]
First we claim that $A_p\in \vvv$. Suppose, towards a contradiction, that the set
$C:=[\expp_d(\ka))^+]^\om \setminus A_p$ is in $\vvv$. Since $J\in\iii_d^+$ we see that
$J\cap C\neq\emptyset$ in $V[G]$. Using the fact that $p\Vdash \dot{J}\notin \iii_d$
and that the forcing $\col$ is $\sigma$-closed, we may find $\xx\in C$ and a condition $q\leq p$
such that $q\Vdash \check{\xx}\in\dot{J}$. But this implies that $\xx\in A_p$, a contradiction.

It follows that we may select a $\vvv$--sequence of conditions
$\overline{q}=(q_\xx:\xx\in A_p)$ such that $q_\xx\leq p$ and $q_\xx
\Vdash \check{\xx}\in \dot{J}$ for every $\xx\in A_p$. By Fact
\ref{fcanonical}, let $r(\overline{q})$ be the root of
$\overline{q}$. Clearly $r(\overline{q})\leq p$.

Now, fix a condition $r$ such that $r\Vdash \dot{J} \notin \iii_d$.
What we have just proved is that the set of conditions $r(\overline{q})$ such that
\begin{enumerate}
\item[($\ast$)] $r(\overline{q})$ is the root of a $\vvv$--sequence of conditions
$\overline{q}=(q_\xx:\xx\in I(\overline{q}))$ with the property that
$q_\xx\Vdash \check{\xx}\in \dot{J}$ for every $\xx\in I(\overline{q})$
\end{enumerate}
is dense below $r$. As $G$ is generic, we see that there exists a $\vvv$--sequence of
conditions $\overline{q}$ as in ($\ast$) above such that $r(\overline{q})\in G$. On the
one hand, by Lemma \ref{characterization}, we see that $D_{\overline{q}}\in \ddd_d$.
On the other hand, property ($\ast$) above implies that $D_{\overline{q}}\subseteq J$;
indeed, if $\xx\in D_{\overline{q}}$, then $q_\xx\in G$ and, by ($\ast$),
$q_\xx\Vdash \check{\xx}\in\dot{J}$. The proof is completed.
\end{proof}
\begin{lem}
\label{la+closed} $\ddd_d$ is $\la$-closed in $\iii_d^+$.
\end{lem}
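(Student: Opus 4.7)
The plan is to construct, in the ground model $V$, a $\vvv$--sequence $\overline{q}$ with $r(\overline{q}) \in G$ (so that $D_{\overline{q}} \in \ddd_d$ by Lemma \ref{characterization}) and $D_{\overline{q}} \subseteq \bigcap_{\xi<\mu} D_{\overline{p}^\xi}$; since $\iii_d^+$ is upward closed, this will yield $\bigcap_{\xi<\mu} D_{\overline{p}^\xi} \in \iii_d^+$ as required.

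First I would record that, because $\coll(\la,<\ka)$ is $\la$-closed and $\mu<\la$, the sequence $(\overline{p}^\xi)_{\xi<\mu}$ lies in $V$; using that each $r(\overline{p}^\xi) \in G$ (so the roots are pairwise compatible in $V$), the condition $R^* := \bigcup_{\xi<\mu} r(\overline{p}^\xi)$ is a valid ground-model element of $\coll(\la,<\ka)$ of size $<\la$, satisfying $R^* \leq r(\overline{p}^\xi)$ for every $\xi$; by $\la$-directedness of $G$ and upward closure, $R^* \in G$. Working in $V$, set $A = \bigcap_{\xi<\mu} I(\overline{p}^\xi) \in \vvv$ and
$$B = \{\xx \in A : (p^\xi_\xx)_{\xi<\mu} \text{ are pairwise compatible}\};$$
for $\xx \in B$ define $q_\xx := \bigcup_{\xi<\mu} p^\xi_\xx$, a valid condition extending each $p^\xi_\xx$.

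The heart of the argument, and the main obstacle, is proving $B \in \vvv$. I would argue by contradiction: if $A \setminus B \in \vvv$, decomposing $A \setminus B = \bigcup_{\xi<\eta<\mu} X_{\xi,\eta}$ with $X_{\xi,\eta} := \{\xx \in A : p^\xi_\xx \perp p^\eta_\xx\}$ and using $\ka$-completeness of $\vvv$ together with $\mu^2<\ka$, some $X_{\xi,\eta}$ must belong to $\vvv$. But for any $\xx \in D_{\overline{p}^\eta}$ the decreasing hypothesis forces $\xx \in D_{\overline{p}^\xi}$ as well, so both $p^\xi_\xx, p^\eta_\xx \in G$ and are therefore compatible in $V$; this gives $D_{\overline{p}^\eta} \cap X_{\xi,\eta} = \emptyset$, placing $D_{\overline{p}^\eta}$ in $\iii_d$ and contradicting $D_{\overline{p}^\eta} \in \ddd_d \subseteq \iii_d^+$.

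Once $B \in \vvv$ is established, the $\vvv$--sequence $\overline{q} = (q_\xx : \xx \in B)$ satisfies $q_\xx \leq p^\xi_\xx$ pointwise, hence $D_{\overline{q}} \subseteq \bigcap_{\xi<\mu} D_{\overline{p}^\xi}$. It remains to verify $r(\overline{q}) = R^*$: pick $\al^* < \ka$ with $\dom(R^*) \subseteq \la \times \al^*$, take $C := \bigcap_{\xi<\mu} C_\xi \cap [\al^*, \ka) \in \uuu$, where $C_\xi = \{\al < \ka : (\vvv\xx)\, p^\xi_\xx \rest \al = r(\overline{p}^\xi)\} \in \uuu$, and observe that for each $\al \in C$ the $\ka$-completeness of $\vvv$ (with $\mu<\ka$) yields a $\vvv$--measure-one set of $\xx \in B$ on which $p^\xi_\xx \rest \al = r(\overline{p}^\xi)$ holds simultaneously for all $\xi < \mu$, whence $q_\xx \rest \al = \bigcup_{\xi<\mu} r(\overline{p}^\xi) = R^*$. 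Thus $(\uuu\al)(\vvv\xx)\, q_\xx \rest \al = R^*$, so by uniqueness of roots (Fact \ref{fcanonical}) $r(\overline{q}) = R^* \in G$, and the proof concludes through Lemma \ref{characterization}.
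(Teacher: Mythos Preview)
Your proof is correct and follows essentially the same route as the paper's: establish that on a $\vvv$-large set the conditions $(p^\xi_\xx)_{\xi<\mu}$ are pairwise compatible (via the contradiction with $D_\eta\subseteq D_\xi$ placing two incompatible conditions in $G$), take their union to form a new $\vvv$--sequence, and show its root equals $\bigcup_{\xi<\mu} r(\overline{p}^\xi)\in G$ using the $\ka$-completeness of $\uuu$ and $\vvv$ together with the $\la$-completeness of $G$. The only differences are cosmetic---you compute $R^*$ and prove $R^*\in G$ at the outset, and you use an explicit $\al^*$ and $C=\bigcap_\xi C_\xi$ in the root verification, whereas the paper packages the same computation into the displayed chain (\ref{hgjhj0})--(\ref{hgjhj3}).
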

\begin{proof}
Fix $\mu<\la$ and a decreasing sequence $(D_\xi:\xi<\mu)$ in $\ddd_d$.
For every $\xi<\mu$ let $\overline{p}_\xi=(p^\xi_\xx:\xx\in I(\overline{p}_\xi))$ be a
$\vvv$--sequence of conditions in $V$ such that $D_\xi=D_{\overline{p}_\xi}$. Our forcing
$\col$ is $\la$-closed, and so, the sequence $(\overline{p}_\xi:\xi<\mu)$ is in the ground
model $V$ as well. Applying Fact \ref{fcanonical} to every $\overline{p}_\xi$, we find a
sequence $(r_\xi:\xi<\mu)$ in $\col$ such that $r_\xi$ is the root of $\overline{p}_\xi$ for
every $\xi<\mu$. By Lemma \ref{characterization}, we get that $r_\xi\in G$ for all $\xi<\mu$.

We claim, first, that for every $\xi<\zeta<\mu$ we have
\begin{equation}
\label{enew1} (\mc V \xx) \ \ p^\xi_\xx \parallel p^{\zeta}_\xx.
\end{equation}
Suppose, towards a contradiction, that there exist $\xi<\zeta<\mu$
such that the set $L:=\set{\xx \in A}{p^\xi_\xx  \perp p^{\zeta}_\xx}$ is in $\vvv$.
As $D_{\overline{p}_\zeta}\in \ddd_d\subseteq \iii_d^+$ and $L\in\vvv$, there exists
$\xx\in D_{\overline{p}_\zeta}\cap L$. And since
$D_{\overline{p}_\zeta}=D_\zeta\sset D_{\xi}=D_{\overline{p}_\xi}$
we have $\xx\in D_{\overline{p}_\xi}$ as well. But this implies that
both $p^\xi_\xx $ and $p^{\zeta}_\xx$ are in $G$ and at the same
time $p^\xi_\xx  \perp p^{\zeta}_\xx$, a contradiction.

Invoking (\ref{enew1}) above, we may find $A\in\vvv$ such that for
every $\xi<\zeta<\mu$ and every $\xx\in A$ we have that $p^{\xi}_\xx
\parallel p^{\zeta}_{\xx}$. We set
\[ p_\xx=\bigcup_{\xi<\mu} p^\xi_\xx \ \ \text{ for every } \xx\in A\]
and we define $\overline{p}=(p_\xx:\xx\in A)$. It is clear that
$\overline{p}$ is a well-defined $\vvv$--sequence of conditions. Also
observe that $D_{\overline{p}}\subseteq D_\xi$ for every $\xi<\mu$.
We are going to show that $D_{\overline{p}}\in\ddd_d$. This will
finish the proof.

To this end, let $r$ be the root of $\overline{p}$. By Lemma \ref{characterization},
it is enough to show that $r\in G$. Notice, first, that
\begin{equation}
\label{hgjhj0} (\uuu\al) \ (\vvv\xx) \ \ \bigcup_{\xi<\mu} p^\xi_\xx\rest\al= p_\xx\rest\al =r.
\end{equation}
On the other hand, as $r_\xi$ is the root of $\overline{p}_\xi$, we have
\begin{equation}
\label{hgjhj1} (\forall \xi<\mu) \ (\uuu\al) \ (\vvv\xx) \ \ p_\xx^\xi \rest\al =r_\xi.
\end{equation}
Both $\uuu$ and $\vvv$ are $\ka$-complete, and so, (\ref{hgjhj1}) is equivalent to
\begin{equation}
\label{hgjhj2} (\uuu\al) \ (\vvv\xx) \ (\forall \xi<\mu) \ \ p_\xx^\xi \rest \al=r_\xi.
\end{equation}
Combining (\ref{hgjhj0}) and (\ref{hgjhj2}) we get that
\begin{equation}
\label{hgjhj3} (\uuu\al) \ (\vvv\xx) \ \ r=\bigcup_{\xi<\mu} p_\xx^\xi \rest \al =\bigcup_{\xi<\mu} r_\xi.
\end{equation}
Summing up, we see that the root $r$ of $\overline p$ is the union
$\bigcup_{\xi<\mu} r_\xi$ of the roots of the $\overline{p}_\xi$'s.
Since the generic filter $G$ is $\la$-complete, we conclude that
$r\in G$. The proof is completed.
\end{proof}
\begin{lem}
\label{colors} Work in $V[G]$. Let $\mu<\ka$ and let $c:[\expp_d(\ka))^+]^{d+1}\to \mu$ be a coloring.
Let also $A\in \iii_d^+$ arbitrary. Then there exist a color $\xi<\mu$ and an element $D\in\ddd_d$
with $D\sset A$ and such that for every $\xx\in D$ and every $\{\al_0,\dots,\al_d\} \in[\xx]^{d+1}$
we have $c(\{\al_0,\dots,\al_d\})=\xi$.
\end{lem}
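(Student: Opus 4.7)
The strategy is to reduce $c$, which lives in $V[G]$, to a ground-model coloring with values in $V_\ka$, so that Fact \ref{fsol1} and the inclusion $\mathrm{Sol}^\om_{d,\ka}\sset\vvv$ provide a large monochromatic set; the rest will follow from Lemma \ref{denseness} and the $\ka$-completeness of $\iii_d$.

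First I fix a $\col$-name $\dot{c}$ for $c$ and a condition $p^*\in G$ forcing $\dot{c}\colon[(\expp_d(\ka))^+]^{d+1}\to\check{\mu}$. Working in $V$, for each $s\in[(\expp_d(\ka))^+]^{d+1}$ I choose, using a fixed well-ordering of $V$, a maximal antichain $M(s)$ of conditions $\leq p^*$ deciding $\dot{c}(s)$, together with the unique function $\phi(s)\colon M(s)\to\mu$ such that each $q\in M(s)$ forces $\dot{c}(s)$ to equal $\phi(s)(q)$. The $\ka$-cc of $\col$ gives $|M(s)|<\ka$, and the inaccessibility of $\ka$ (together with the fact that every condition has size below $\la$ and takes ordinal parameters below $\ka$) gives $M(s),\phi(s)\in V_\ka$. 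Thus $\tilde{c}(s):=(M(s),\phi(s))$ defines a coloring $\tilde{c}\colon[(\expp_d(\ka))^+]^{d+1}\to V_\ka$ in the ground model, and by Fact \ref{fsol1} combined with $\mathrm{Sol}^\om_{d,\ka}\sset\vvv$, the set $B:=\mathrm{Sol}^\om_{d,\ka}(\tilde{c})$ belongs to $\vvv$.

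For every $\xx\in B$ write $(M_\xx,\phi_\xx)$ for the common value of $\tilde{c}$ on $[\xx]^{d+1}$. Back in $V[G]$, since $p^*\in G$ and $M_\xx$ is a maximal antichain of $V$ below $p^*$, genericity supplies a unique $q_\xx\in M_\xx\cap G$; set $\xi_\xx:=\phi_\xx(q_\xx)\in\mu$. By construction, $c\rest[\xx]^{d+1}$ is constantly $\xi_\xx$ for every $\xx\in B$. Since $B\in\vvv$ and $A\in\iii_d^+$, the intersection $B\cap A$ lies in $\iii_d^+$ (straight from the definition of $\iii_d$ and the filter property of $\vvv$). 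I then partition $B\cap A$ into the $\mu$ pieces $(B\cap A)_\xi:=\set{\xx\in B\cap A}{\xi_\xx=\xi}$, and the $\ka$-completeness of $\iii_d$ (using $\mu<\ka$) produces some $\xi<\mu$ with $(B\cap A)_\xi\in\iii_d^+$. Lemma \ref{denseness} then yields $D\in\ddd_d$ with $D\sset(B\cap A)_\xi\sset A$, and on this $D$ the coloring $c$ is constantly $\xi$ on every $[\xx]^{d+1}$, as required.

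The main obstacle will be the construction of the ground-model encoding $\tilde{c}$: one must simultaneously use the $\ka$-cc of $\col$ (to control antichain sizes), the inaccessibility of $\ka$ (to place conditions in $V_\ka$), and a canonical choice of $(M(s),\phi(s))$ so that $\tilde{c}$ is a genuine $V$-coloring. Once this reduction is in place, the remainder is a clean assembly of Fact \ref{fsol1}, Lemma \ref{denseness}, and the $\ka$-completeness of $\iii_d$.
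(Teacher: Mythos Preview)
Your argument is correct and follows essentially the same strategy as the paper's proof: encode the $V[G]$-coloring $c$ by a ground-model coloring with values in $V_\ka$, invoke $\mathrm{Sol}^\om_{d,\ka}\sset\vvv$, then use the $\ka$-completeness of $\iii_d$ together with Lemma \ref{denseness} to finish. The only difference is in the encoding step: the paper records, for each $s$, the sequence of boolean values $\big(\bva{\dot{c}(\check{s})=\check{\xi}}:\xi<\mu\big)\in(\mathrm{RO}(\col))^\mu$, whereas you record a maximal antichain below a fixed $p^*\in G$ together with its decision function. These two encodings carry the same information (via the $\ka$-cc each boolean value is determined by an antichain of size below $\ka$), and your version is perhaps slightly more explicit about why the values land in $V_\ka$. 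One cosmetic point: rather than appealing to ``a fixed well-ordering of $V$'' (a proper class), it suffices to invoke the Axiom of Choice in $V$ to select the pairs $(M(s),\phi(s))$ simultaneously, or to well-order a large enough $V_\theta$.
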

\begin{proof}
Fix a coloring $c:[\expp_d(\ka))^+]^{d+1}\to \mu$ and let $A\in\iii_d^+$. Let also $\dot{c}$
be a $\col$-name for the coloring $c$. In $V$, let $\mathrm{RO}(\col)$ be the collection of all
regular-open subsets of $\col$. Working in $V$, we define another coloring
$C:[\expp_d(\ka))^+]^{d+1}\to (\mathrm{RO}(\col))^{\mu}$ by the rule
\[ C(s)=([\hspace{-0.05cm}[\dot{c}(\check{s})=\check{\xi} ]\hspace{-0.045cm}]:\xi<\mu)\]
where $[\hspace{-0.05cm}[\dot{c}(\check{s})=\check{\xi}  ]\hspace{-0.045cm}]=
\set{p\in \col}{p\Vdash\dot{c}(\check{s})=\check{\xi}}$ is the boolean
value of the formula ``$c(s)=\xi$".

The forcing $\col$ is $\ka$-cc, and so, $(\mathrm{RO}(\col))^{\mu}\sset {V}_\ka$. Hence,
$\mathrm{Sol}^\om_{d,\ka}(C)\in\vvv$. We set $J=A\cap \mathrm{Sol}^\om_{d,\ka}(C)$.
Then $J$ is in $\iii_d^+$. Notice that for every $\xx\in J$ and every $s, s'\in [\xx]^{d+1}$
we have $C(s)=C(s')$. It follows that for every $\xx\in J$ we may select a sequence
$\overline{U}_\xx=(U^\xi_{\xx}:\xi<\mu)$ in $(\mathrm{RO}(\col))^{\mu}$ such that for every
$s\in [\xx]^{d+1}$ and every $\xi<\mu$ we have
$[\hspace{-0.05cm}[\dot{c}(\check{s})=\check{\xi}]\hspace{-0.045cm}]=U^\xi_{\xx}$.

Now observe that for every $s\in [\expp_d(\ka))^+]^{d+1}$ the set
\[ \set{[\hspace{-0.05cm}[\dot{c}(\check{s})=\check{\xi}]\hspace{-0.045cm}]}{\xi<\mu} \]
is a maximal antichain. So, we can naturally define in $V[G]$ a coloring $e:J\to \mu$ by the rule
\[ e(\xx)=\xi \ \text{ if and only if } \ U^\xi_\xx\in G.\]
Equivalently, for every $\xx\in J$ we have that $e(\xx)=\xi$ if and
only if $c\rest [\xx]^{d+1}$ is constant with value $\xi$. The ideal
$\iii_d$ is $\ka$-complete and $J\in \iii_d^+$. Hence there exists
$\xi_0<\mu$ such that $e^{-1}\{\xi_0\}\in \iii_d^+$. By Lemma
\ref{denseness}, we may select $D\in \ddd_d$ with $D\sset
e^{-1}\{\xi_0\}\sset J\sset A$. Finally, notice that for every
$\xx\in D$ the restriction $c\rest [\xx]^{d+1}$ is constant with value
$\xi_0$. The proof is completed.
\end{proof}
We are ready to finish the proof of Lemma \ref{MainLemma}. As we have already mention, the ideal
$\iii_d$ will be the one defined in Definition \ref{defideal}, while the dense subset $\ddd_d$
of $\iii_d^+$ will be the one defined in Definition \ref{defdense}. First, we notice that property
(1) in Lemma \ref{MainLemma} (i.e. the fact that $\iii_d$ is $\ka$-complete) follows easily by the
definition $\iii_d$ and the fact that $\vvv$ is $\ka$-complete (in fact, we have already isolated
this property of $\iii_d$ in (P1) above). Property (2) in Lemma \ref{MainLemma} (i.e. the fact that
$\ddd_d$ is $\la$-closed in $\iii_d^+$) has been established in Lemma \ref{la+closed}. Finally,
property (3) was proved in Lemma \ref{colors}. Since $d\geq 1$ was arbitrary, the proof of Lemma
\ref{MainLemma} is completed.

\section{Concluding remarks}

In this section we would like to discuss the possible refinements
of our Theorem \ref{ithm1}. First of all we notice that
Ketonen's arguments actually give that if the density of a given
Banach space $E$ is greater or equal than the $\om$-Erd\H{o}s
cardinal, then $E$ contains a normalized basic sequence which is
equivalent to all of its subsequences, i.e. a  basic sequence
which is in the literature usually called a \emph{sub-symmetric
basic sequence}. Note that this is stronger than saying that the
space $E$ contains an unconditional basic sequence which can be
easily seen using Rosenthal's $\ell_1$ theorem \cite{Ro}.

On the other hand, we notice that our proof of the existence of an
unconditional basic sequence in every Banach space of density
$\expp_\om(\aleph_0)$ \textit{does not} guarantee the existence of a
sub-symmetric basic sequence. This is mainly due to the fact that
the principle $\pl_2(\ka)$ is a rectangular Ramsey property while
all attempts that we have in mind for getting sub-symmetric basic
sequences seem to require more classical Ramsey-type principles such
as these given, for example, by the $\om$-Erd\H{o}s cardinal. Since
$\om$-Erd\H{o}s is a large-cardinal property one might expect that
there are Banach spaces of large density not containing a
sub-symmetric basic sequence. So let us discuss some difficulties
one encounters when trying to build such spaces.

The first example of an infinite dimensional Banach space not
containing a sub-symmetric basic sequence is Tsirelson's space
\cite{Ts}. Tsirelson's space is separable; however, there do exist
non-separable Banach spaces with the same property. The first such
example is due to E. Odell \cite{O1}. Odell's space is the dual of
a separable one, and so, it has density $2^{\aleph_0}$. There even
exist non-separable \emph{reflexive} spaces not containing a
sub-symmetric basic sequence. For example, one such a space is
the space constructed in \cite{ALT} which has density $\aleph_1$.
We note that both spaces of \cite{O1} and of \cite{ALT} are
connected in some way to the Tsirelson space. So one is led to
explore generalizations of the Tsirelson construction to larger
densities.

Let us comment on difficulties encountered when trying to generalize
Tsirelson's construction to densities bigger than the continuum,
keeping in mind that we would like to get a space not containing a
sub-symmetric basic sequence. The first natural move is to provide,
for a given cardinal $\ka$, a compact hereditary family
$\mathcal{F}$ of finite subsets of $\ka$ which is sufficiently rich
in the sense that for every infinite subset $M$ of $\ka$ the
restriction $\mathcal{F}\upharpoonright M$ of the family on $M$ has
infinite rank. Notice that such a family cannot exist if $\ka$ is
greater or equal the $\om$-Erd\H{o}s cardinal. On the other hand,
using a characterization of $n$-Mahlo cardinals due to J. H. Schmerl
(see \cite{sch} or \cite[Theorem 6.1.8]{tod}), we were able to show
that if $\ka$ is smaller that the first $\om$-Mahlo cardinal, then
$\ka$ carries such a family $\mathcal{F}$.

Given a compact hereditary family $\mathcal{F}$ as above, the next
step is to construct the Tsirelson-like space $T(\mathcal{F})$ on
$c_{00}(\ka)$ in the natural way. Such a space always fails to
contain $c_0$ and $\ell_p$ for any $1<p<\infty$. However, there are
examples of such families for which the corresponding space contains
a copy of $\ell_1$. The reason is that the family $\mathcal{F}$
cannot be \emph{spreading} relative the natural well-ordering of
ordinals if $\ka$ is uncountable. Recall that spreading is a crucial
property of the Schreier family on $\omega$ used in the original
Tsirelson construction for preventing isomorphic copies of $\ell_1$.
We are grateful to Spiros A. Argyros for pointing out this to us
after reading a previous version of this paper containing the
erroneous claim that $T(\mathcal{F})$ contains no isomorphic copy of
$\ell_1$.

In fact, in order to prevent the embedding of $\ell_1$ inside $T(\mathcal{F})$ it suffices,
beside the above requirements, to ensure that the family $\mathcal{F}$ is \textit{weak spreading}
in the sense that if $\al_0\leq\be_0\leq\al_1\leq\dots\leq\al_n\leq\be_n$ and $\{\al_0,\dots,\al_n\}$
is in the family $\mathcal{F}$, then $\{\be_0,\dots,\be_n\}$ is also in $\mathcal{F}$. It is unclear
to us whether such a family $\mathcal{F}$ can exist on a cardinal $\ka$ greater than $\aleph_1$.


\end{document}